\newtheorem{theorem}{Theorem}[section]
\newtheorem{corollary}[theorem]{Corollary}
\newtheorem{definition}[theorem]{Definition}
\newtheorem{lemma}[theorem]{Lemma}
\newtheorem{proposition}[theorem]{Proposition}
\newtheorem{remark}[theorem]{Remark}
\newtheorem{assumption}[theorem]{Assumption}
\DeclareMathOperator{\cnx}{div}
\DeclareMathOperator{\diff}{d}
\DeclareMathOperator{\pv}{pv}
\def\nc{\newcommand}
\def\be{\beta}
\def\lam{\lambda}
\def\ra{\rightarrow}
\def\la{\leftarrow}
\def\D{\la D\ra}
\nc\pa{\partial}
\nc\CC{\mathbb{C}}
\nc\RR{\mathbb{R}}
\nc\QQ{\mathbb{Q}}
\nc\ZZ{\mathbb{Z}}
\nc\NN{\mathbb{N}}
\def\ba{\begin{align}}
\def\bad{\begin{aligned}}
\def\be{\begin{equation}}
\def\ea{\end{align}}
\def\ead{\end{aligned}}
\def\ee{\end{equation}}
\def\e{\eqref}
\def\dalpha{\diff \! \alpha}
\def\deta{\diff \! \eta}
\def\dt{\diff \! t}
\def\dh{\diff \! h}
\def\dr{\diff \! r}
\def\dx{\diff \! x}
\def\dxi{\diff \! \xi}
\def\dy{\diff \! y}
\def\fract{\frac{\diff}{\dt}}
\def\fracr{\frac{\diff}{\dr}}
\def\defn{\mathrel{:=}}
\def\eps{\varepsilon}
\def\la{\left\vert}
\def\lA{\left\Vert}
\def\bla{\big\vert}
\def\blA{\big\Vert}
\def\le{\leq}
\def\les{\lesssim}
\def\mez{\frac{1}{2}}
\def\ra{\right\vert}
\def\rA{\right\Vert}
\def\bra{\big\vert}
\def\brA{\big\Vert}
\def\tdm{\frac{3}{2}}
\def\uq{\frac{1}{4}}
\def\xN{\mathbb{N}}
\def\xR{\mathbb{R}}
\begin{document}
\title{On the Cauchy problem for the Muskat equation 
with non-Lipschitz initial data}
\author{Thomas Alazard}
\thanks{E-mail address: thomas.alazard@ens-paris-saclay.fr, Universit{\'e} Paris-Saclay, ENS Paris-Saclay, CNRS,
	Centre Borelli UMR9010, avenue des Sciences,
	F-91190 Gif-sur-Yvette, Paris, France}
\author{Quoc-Hung Nguyen}
\thanks{E-mail address: qhnguyen@shanghaitech.edu.cn, ShanghaiTech University, 393 Middle Huaxia Road, Pudong,
			Shanghai, 201210, China.}
\date{}

\setlength{\baselineskip}{5mm}

\begin{abstract}
This article is devoted to the study of the Cauchy problem for the Muskat equation. We consider initial data belonging to the critical 
Sobolev space of functions with three-half derivative in~$L^2$, up to 
a fractional logarithmic correction. 
As a corollary, we obtain the first local and global well-posedness results for initial 
free surfaces which are not Lipschitz. 

\end{abstract}   

\maketitle
      
\vfill
            
\section{Introduction}
The Muskat equation is an important model in the analysis of free surface flows, which 
describes the dynamics of the interface separating two fluids whose velocities obey 
Darcy's law (\cite{darcy1856fontaines,Muskat}). 
Its two main features are that it is a fractional parabolic equation 
and a highly nonlinear equation. 
These two features are shared by several equations 
which have attracted a lot of attention in recent years, like the surface quasi-geostrophic equation, 
the Hele-Shaw equation or 
the fractional porous media equation, to name a few. 
Among these equations, a specificity of the Muskat equation is that it 
admits a beautiful compact formulation in terms of finite differences, as observed by C\'ordoba 
and Gancedo~\cite{CG-CMP}. 
The latter formulation allows to study the Cauchy problem by means of tools at the interface of 
harmonic analysis and nonlinear partial differential equations. In this direction, we are very much 
influenced by the recent works by Constantin, C{\'o}rdoba, Gancedo, Rodr{\'\i}guez-Piazza 
and Strain~\cite{CCGRPS-JEMS2013,CCGRPS-AJM2016}, C\'ordoba and Lazar~\cite{Cordoba-Lazar-H3/2} 
and Gancedo and Lazar~\cite{Gancedo-Lazar-H2}. 

Our goal is to introduce for the Muskat problem an approach based on 
a logarithmic correction to the usual Gagliardo semi-norms which is adapted to both 
the fractional and nonlinear features of the equation, following earlier works 
in~\cite{Alazard-Lazar,BN18a,BN18b,BN18c,BN18d,Ng}. 
Our main result is stated after 
we introduce some notations, but one can express its main corollary as follows: 
one can study the Cauchy problem in an almost critical Sobolev space, 
allowing initial data which are not Lipschitz. 

\clearpage

\subsection{The Muskat equation}
Consider the dynamics of a time-dependent 
curve $\Sigma(t)$ separating two $2D$-domains $\Omega_1(t)$ and $\Omega_2(t)$. 
On the supposition that $\Sigma(t)$ is the graph of some function, we introduce the following notations
\begin{align*}
\Omega_1(t)&=\left\{ (x,y)\in \xR\times \xR\,;\, y>f(t,x)\right\},\\
\Omega_2(t)&=\left\{ (x,y)\in \xR\times \xR\,;\, y<f(t,x)\right\},\\
\Sigma(t)&=\left\{ (x,y)\in \xR\times \xR\,;\, y=f(t,x)\right\}.
\end{align*}
Assume 
that each domain~$\Omega_j$, $j=1,2$, 
is occupied by an incompressible fluid with constant density~$\rho_j$ and 
denote by $\rho=\rho(t,x)$ the function with value 
$\rho_j$ for $x\in\Omega_j(t)$. We assume that $\rho_2>\rho_1$ so that the heavier fluid is underneath the lighter one. 
Then the motion is determined by the incompressible porous media equations, where the velocity 
field $v$ is given by Darcy's law:
\be\label{MuskatrhovP}
\left\{
\begin{aligned}
&\partial_t\rho+\cnx(\rho v)=0,\\
&\cnx v=0,\\
&v+\nabla (P+\rho gy)=0,
\end{aligned}
\right.
\ee
where $g$ is the acceleration of gravity. 

Changes of unknowns, reducing the problem~\e{MuskatrhovP} to an evolution 
equation for the free surface parametrization, 
have been known for quite a time (see~\cite{CaOrSi-SIAM90,EsSi-ADE97,PrSi-book,SCH2004}). 
This approach was further developed by C\'ordoba and Gancedo~\cite{CG-CMP}
who obtained a beautiful compact formulation of the Muskat equation. Indeed, 
they showed that the Muskat problem is equivalent to the following 
equation for the free surface elevation:
\begin{align}\label{eq2.1}
\partial_tf=\frac{\rho}{2\pi}\pv \int_\xR\frac{\partial_x\Delta_\alpha f}{1+\left(\Delta_\alpha f\right)^2}\dalpha,
\end{align}
where 
the integral is understood in the sense of principal values, 
$\rho=\rho_2-\rho_1$ is the difference of the densities 
of the two fluids and $\Delta_\alpha f$ is the slope
\begin{align}\label{eq2.2}
\Delta_\alpha f(t,x)=\frac{f(t,x)-f(t,x-\alpha)}{\alpha}\cdot
\end{align}
Since $\rho_2>\rho_1$ by assumption, we may set $\rho=2$ without loss of generality. 

A key feature of this problem is 
that~\e{eq2.1} is preserved by the change of unknowns:
$$
f(t,x)\mapsto \frac{1}{\lambda}f\left(\lambda t,\lambda x\right).
$$
Hence, the two natural critical spaces for the initial data are the homogeneous spaces
$$
\dot{H}^{\frac{3}{2}}(\xR),\quad \dot{W}^{1,\infty}(\xR).
$$
The analysis of the Cauchy problem for the Muskat equation is now well developed, 
including global existence results under mild smallness assumptions and blow-up 
results for some large enough initial data. 
Local well-posedness results go back to the works of Yi~\cite{Yi2003}, 
Ambrose~\cite{Ambrose-2004,Ambrose-2007}, 
C\'ordoba and Gancedo~\cite{CG-CMP}, C\'ordoba, C\'ordoba and Gancedo~\cite{CCG-Annals}, 
Cheng, Granero-Belinch\'on, 
Shkoller~\cite{Cheng-Belinchon-Shkoller-AdvMath}. 
Then local well-posedness results were obtained in 
the sub-critical spaces by 
Constantin, Gancedo, Shvydkoy and Vicol~\cite{CGSV-AIHP2017} 
for initial data in the Sobolev space 
$W^{2,p}(\xR)$ for some $p>1$, and Matioc~\cite{Matioc1,Matioc2} for initial data 
in $H^s(\xR)$ with $s > 3/2$ (see also~\cite{Alazard-Lazar,Nguyen-Pausader}). 
Since the Muskat equation is parabolic, the proof of the local well-posedness 
results also gives global well-posedness results under a smallness assumption, see Yi~\cite{Yi2003}. 
The first global well-posedness results 
under mild smallness assumptions, namely assuming 
that the Lipschitz semi-norm is smaller than $1$, was 
obtained by Constantin, C{\'o}rdoba, Gancedo, Rodr{\'\i}guez-Piazza 
and Strain~\cite{CCGRPS-AJM2016} (see also \cite{CGSV-AIHP2017,PSt}). 

On the other hand, there are blow-up results for some large enough data 
by Castro, C\'{o}rdoba, Fefferman, Gancedo and 
L\'opez-Fern\'andez~(\cite{CCFG-ARMA-2013,CCFG-ARMA-2016,CCFGLF-Annals-2012}). 
They prove the existence of solutions such that 
at time $t=0$ the interface is a graph, at a later time $t_1>0$ 
the interface is no longer a graph and then at a subsequent time $t_2>t_1$, 
the interface is $C^3$ but not $C^4$. 

The previous discussion raises a question about the possible existence of a 
criteria on the slopes of the solutions which 
would force/prevent them to enter the unstable regime where the slope is infinite. 
Surprisingly, it is possible to solve the Cauchy problem for initial 
data whose slope can be arbitrarily large. 
Deng, Lei and Lin in~\cite{DLL} obtained the first result in this direction, under the assumption that the 
initial data are monotone. Cameron \cite{Cameron} proved 
the existence of a modulus of continuity for the derivative, and hence 
a global existence result assuming only that 
the product of the maximal and minimal slopes is bounded by $1$; thereby 
allowing arbitrarily large slopes too 
(recently, Abedin and Schwab also obtained the existence of a modulus of continuity 
in~\cite{Abedin-Schwab-2020} via Krylov-Safonov estimates). 
Then, by using a new formulation of the Muskat equation involving oscillatory integrals, 
C\'ordoba and Lazar established 
in \cite{Cordoba-Lazar-H3/2} that 
the Muskat equation is globally well-posed in time, assuming only that 
the initial data is sufficiently smooth and that the $\dot H^{3/2}(\xR)$-norm is small enough. 
This result was extended to the 3D case by Gancedo and Lazar~\cite{Gancedo-Lazar-H2}. 
Let us also quote papers by Vazquez~\cite{Vazquez-DCDS}, 
Granero-Belinch{\'o}n and Scrobogna~\cite{Granero-Scrobogna} for related global existence 
results for different equations. The existence and possible non-uniqueness 
of weak-solutions has also been thoroughly studied (we refer the reader to~\cite{Brenier2009,cordoba2011lack,szekelyhidi2012relaxation,castro2016mixing,forster2018piecewise,noisette2020mixing}).

\subsection{Fractional logarithmic spaces}
Based on the discussion earlier, one of the main questions left open is to solve the 
Cauchy problem for the Muskat equation 
for initial data which are not Lipschitz. Indeed, for such data, 
the slope is not only arbitrarily large but can be infinite. 
To prove the existence of such solutions, 
the main difficulties one has to cope with are the following: 
Firstly,  there is a degeneracy in the parabolic behavior when $f_x$ is not controlled 
(this is easily seen by looking at the energy estimate~\e{i5}~below: 
when $f_x$ is not controlled, one does not control the $L^2_{t,x}$-norm of the derivatives). Secondly, in addition to this degeneracy, one cannot apply classical nonlinear estimates. 
Indeed the latter 
require to control the $L^\infty$-norm of 
some factors, which amounts here to control the $L^\infty$-norm of the slopes~$\Delta_\alpha f$, equivalent to control 
the Lipschitz norm of~$f$. 
To overcome these difficulties, we will use two different kind of arguments, following earlier works in~\cite{Alazard-Lazar,BN18a,BN18b}. Firstly, we will prove estimates valid in 
critical spaces, by exploiting various cancellations 
as well as specific inequalities. 
Secondly, we will perform energy estimates 
in some variants of the classical Sobolev spaces, 
allowing to control a {\em fraction} of a logarithmic derivative. 
More precisely, the idea followed in this paper is to estimate the 
following norms.

\begin{definition}\label{defi:1}Given $a\ge 0$ and $s\ge 0$, the fractional logarithmic space 
$\mathcal{H}^{s,a}(\xR)$ consists of those functions 
$g\in L^2(\xR)$ such that the following norm is finite:
$$
\lA g\rA_{\mathcal{H}^{s,a}}^2=\int_{\xR} \left(1+|\xi|^2\right)^s\left( \log(4+|\xi|)\right)^{2a}\la \hat{g}(\xi)\ra^2\dxi.
$$
\end{definition}
\begin{remark}
$(i)$ 
Since the formulation of the Muskat equation involves 
the finite differences of $f$, it is important to notice 
that these semi-norms can be defined in 
terms of finite differences. We will see that, if $s\in (0,2)$, 
$$
\lA g\rA_{\mathcal{H}^{s,a}}^2\sim \lA g\rA_{L^2(\xR)}^2+
\iint_{\xR^2} \frac{\la 2g(x)-g(x+h)-g(x-h)\ra^2}{|h|^{2s}} \left[\log\left(4+\frac{1}{|h|^2}\right)\right]^{2a}\frac{\dx\dh}{|h|},
$$
and if $s=0$
\begin{align*}
\lA g\rA_{\mathcal{H}^{0,a}}^2&\sim \lA g\rA_{L^2(\xR)}^2\\&+
\iint_{\xR^2} \mathbf{1}_{|h|<\frac{1}{2}} \la 2g(x)-g(x+h)-g(x-h)\ra^2 
\left[\log\left(4+\frac{1}{|h|^2}\right)\right]^{-1+2a}\frac{\dx\dh}{|h|}\cdot
\end{align*}
The latter norms
were introduced in~\cite{BN18a} for $s\in [0,1)$ (with the symmetric 
difference replaced by $g(x+h)-g(x)$). 

$(ii)$ Here the word `fractional' is used to insist on the fact that $a$ 
belongs to $(0,1]$. This is important in view of~\e{n40} below. 
\end{remark}

We consider initial data in $\mathcal{H}^{3/2,a}(\xR)$ for some $a\ge 0$. 
Notice that the latter spaces lie 
between the Sobolev spaces $H^{3/2}(\xR)$ and $H^{3/2+\epsilon}(\xR)$:
$$
\forall \eps>0,~\forall a\ge 0,\quad 
H^{\tdm+\epsilon}(\xR)\subset \mathcal{H}^{\tdm,a}(\xR)\subset\mathcal{H}^{\tdm,0}(\xR)=H^{\tdm}(\xR).
$$
The definition of the Sobolev spaces is recalled below in \e{defi:Sobolev}. 
For our purposes, the most important think to note is that
\be\label{n40}
\mathcal{H}^{\tdm,a}(\xR)\subset W^{1,\infty}(\xR) 
\quad \text{if and only if}\quad a>\mez\cdot
\ee
It follows from~\e{n40} that there is a key dichotomy between the cases $a\le 1/2$ and $a>1/2$. 
Loosely speaking, for $a>1/2$, the analysis of the Cauchy problem in $\mathcal{H}^{3/2,a}(\xR)$ 
is expected to be similar to the one in sub-critical spaces $H^s(\xR)$ with $s>3/2$. 
While for $a\le 1/2$, the same problem is expected to be much more 
involved since one cannot control the $W^{1,\infty}$-norm of $f$ (which is  
ubiquitous in the estimates of nonlinear quantities involving gradients 
or the slopes $\Delta_\alpha f$).

\subsection{Main results} 
Once the fractional logarithmic spaces have been introduced, 
the question of solving the Cauchy problem for non Lipschitz initial data can be made precise. 
Namely, our goal is to prove that the Cauchy problem is well-posed on $\mathcal{H}^{3/2,a}(\xR)$ for 
some $a\le \frac{1}{2}$. Our main results assert that in fact one can solve the Cauchy problem 
down to $a= \frac{1}{3}$.

Recall that we set $\rho_2-\rho_1=2$, so that the Muskat equation~\e{eq2.1} reads
\begin{align}\label{eq2.1b}
\partial_tf=\frac{1}{\pi}\pv \int_\xR\frac{\partial_x\Delta_\alpha f}{1+\left(\Delta_\alpha f\right)^2}\dalpha.
\end{align}

\begin{theorem}[local well-posedness]\label{theo:main}
For any 
initial data $f_0$ in $\mathcal{H}^{\frac{3}{2},\frac{1}{3}}(\xR)$, 
there exists a positive time $T$ such that the Cauchy problem for the Muskat equation~\e{eq2.1b} has 
a unique solution
$$
f\in C^0\big([0,T];\mathcal{H}^{\tdm,\frac{1}{3}}(\xR)\big)
\cap L^2\big(0,T;H^{2}(\xR)\big).
$$ 
\end{theorem}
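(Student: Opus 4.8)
The plan is to establish Theorem~\ref{theo:main} by a combination of a priori estimates in the fractional logarithmic space $\mathcal H^{3/2,1/3}$, a parabolic smoothing/compactness argument for existence, and a contraction-type estimate at a lower regularity level for uniqueness. Since the Muskat equation is a fractional parabolic equation of order one (the symbol of the linearized operator behaves like $|\xi|$), the natural energy estimate controls $\|f\|_{\mathcal H^{3/2,a}}$ together with the parabolic gain $\|f\|_{L^2_tH^2}$ — but, as emphasized in the introduction, the coefficient $1/(1+(\Delta_\alpha f)^2)$ in front of the dissipation degenerates when $f_x$ is not bounded, and $a=1/3\le 1/2$ precisely fails to control $\|f\|_{W^{1,\infty}}$ by \eqref{n40}. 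So the whole point is that the logarithmic weight $(\log(4+|\xi|))^{2a}$, although too weak to give a Lipschitz bound, must still be exploited to close the estimate.

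First I would set up the energy functional. Using the finite-difference characterization of $\mathcal H^{3/2,a}$ from Remark after Definition~\ref{defi:1}, write $\frac{d}{dt}\|f\|_{\mathcal H^{3/2,a}}^2$ and insert the equation~\eqref{eq2.1b}. The main term is the quadratic dissipative term coming from the leading linear part; one wants to show it dominates, producing a term comparable to $\|f\|_{H^2}^2$ (up to the logarithmic weight) with a good sign. The danger terms are: (i) the commutator between the logarithmic Fourier weight and the nonlinearity, and (ii) the contributions where the coefficient $1/(1+(\Delta_\alpha f)^2)$ is differentiated, which brings out extra factors of $\Delta_\alpha f$ that are not in $L^\infty$. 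The strategy, following \cite{Alazard-Lazar,BN18a,BN18b}, is to rewrite these using the symmetric second difference $2f(x)-f(x+h)-f(x-h)$ (which is exactly the quantity weighted in the $\mathcal H^{s,a}$ seminorm), so that the ``bad'' slope factors appear paired with second differences and can be absorbed: one estimates products by Cauchy–Schwarz in $(x,h)$, putting the second-difference factors on the energy/dissipation side and the remaining slope factors into an $L^2$-in-$h$ tail that is controlled by $\|f\|_{\mathcal H^{3/2}}$ itself (no $L^\infty$ needed). The fractional nature $a\le 1/2$ is used through a logarithmic interpolation/Young-type inequality: a loss of $(\log(4+|\xi|))^{2a}$ on one factor can be traded against a gain $(\log)^{-1+2a}$ type quantity, which is integrable against $dh/|h|$ near $h=0$ precisely when $a<1/2$ is compensated by the $H^2$ gain; the borderline exponent $a=1/3$ is presumably where the number of ``logarithmic charges'' needed by the nonlinear terms (which are at most cubic in the slope, by the $1/(1+(\cdot)^2)$ structure) balances what the dissipation can pay for.

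With the a priori bound $\frac{d}{dt}\|f\|_{\mathcal H^{3/2,1/3}}^2 + c\|f\|_{H^2}^2 \le C(\|f\|_{\mathcal H^{3/2,1/3}})\,\|f\|_{\mathcal H^{3/2,1/3}}^2$ in hand (with $C$ locally bounded), a standard ODE comparison gives a time $T>0$, depending only on $\|f_0\|_{\mathcal H^{3/2,1/3}}$, on which any sufficiently smooth solution stays bounded in $C^0_t\mathcal H^{3/2,1/3}\cap L^2_tH^2$. For existence I would regularize: either mollify the initial data and add an artificial viscosity $\nu\partial_x^2 f$ (or $-\nu|D|^2 f$) to make the equation genuinely parabolic, solve the regularized problem by a fixed-point argument in $H^s$ for $s>3/2$ using Matioc's/standard parabolic theory, obtain uniform-in-$\nu$ bounds from the same energy estimate (checking the viscous term only helps), and pass to the limit $\nu\to 0$ via Aubin–Lions compactness; continuity in time of the $\mathcal H^{3/2,1/3}$-norm follows from the energy identity plus weak continuity. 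For uniqueness, I would estimate the difference $f_1-f_2$ of two solutions in a weaker space, say $L^2$ or $H^{1/2}$, where the equation is subcritical relative to the available regularity $L^2_tH^2$ of each solution; the quasilinear structure gives $\frac{d}{dt}\|f_1-f_2\|_{H^{1/2}}^2 \le C(t)\|f_1-f_2\|_{H^{1/2}}^2$ with $C\in L^1_t$ built from $\|f_j\|_{H^2}$, and Gronwall closes it.

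The main obstacle I anticipate is step one — closing the a priori estimate at $a=1/3$. Every other ingredient (artificial viscosity, compactness, Gronwall uniqueness) is routine once the estimate holds. The subtlety is purely in the harmonic analysis: one must find the exact algebraic rearrangement of the commutator and coefficient-derivative terms so that \emph{all} uncontrolled slope factors $\Delta_\alpha f$ end up bundled with a symmetric second difference, and then verify that the bookkeeping of logarithmic weights — how many powers of $\log$ the dissipation can afford to spend versus how many the cubic nonlinearity demands — actually balances at the threshold $a=1/3$ rather than only at $a>1/2$. I would expect the proof to hinge on a sharp pointwise inequality for $\Delta_\alpha f$ in terms of a square function, together with a logarithmic Gagliardo–Nirenberg inequality interpolating $\mathcal H^{3/2,a}$ between $H^{3/2}$ and $H^2$, both of which are likely proved as separate lemmas earlier in the paper.
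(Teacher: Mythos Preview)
Your outline has the right architecture (energy estimate with parabolic gain, approximation, compactness, uniqueness at $\dot H^{1/2}$), and the uniqueness sketch matches the paper's almost exactly. But the heart of the argument --- the a priori estimate at $a=1/3$ --- is stated in a form that does \emph{not} actually hold, and this hides the one genuinely new idea needed for the critical exponent.

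Concretely, you claim an inequality of the shape
\[
\fract \lA f\rA_{\mathcal H^{3/2,1/3}}^2 + c\,\lA f\rA_{H^2}^2 \le C\big(\lA f\rA_{\mathcal H^{3/2,1/3}}\big)\,\lA f\rA_{\mathcal H^{3/2,1/3}}^2
\]
with a fixed constant $c>0$, and conclude that $T$ depends only on $\lA f_0\rA_{\mathcal H^{3/2,1/3}}$. Neither statement is correct here. What the energy computation actually produces (this is Proposition~\ref{P:3.3}) is, with $A=\lA \D^{3/2,\phi}f\rA_{L^2}^2$ and $B=\lA \D^{2,\phi}f\rA_{L^2}^2$,
\[
\fract A + C_1\,\delta(t)\,B \le C_2\big(\sqrt A+A\big)\,\mu(t)\,B,
\]
where the dissipation coefficient $\delta(t)\sim \big(1+\log(4+B/A)^{1-2a}(A+\lA f_0\rA_{L^2}^2)\big)^{-1}$ \emph{degenerates} as $B\to\infty$, and $\mu(t)\sim \kappa(B/A)^{-1}$. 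The degeneracy is exactly the manifestation of the lack of Lipschitz control you allude to: the factor $1/(1+f_x^2)$ in front of $\la \D^{2,\phi}f\ra^2$ cannot be bounded below by a constant. Absorbing the right-hand side into the left requires $\mu(t)\lesssim \delta(t)$, i.e.\ $\kappa(r)\gtrsim \log(4+r)^{1-2a}$; for $\kappa(r)=\log(4+r)^a$ this means $a\ge 1-2a$, so $a=1/3$ is the \emph{borderline} where equality holds and the absorption just barely fails for large data.

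The paper's way out (Section~\ref{S:critical}) is to abandon the pure logarithmic weight at $a=1/3$ and instead choose, via Lemma~\ref{L:critical}, an auxiliary increasing function $\tilde k$ with $\tilde k(r)\to\infty$ such that $f_0$ still lies in the space defined by the enhanced weight $\kappa_0(r)=\log(4+r)^{1/3}\tilde k(r)$. This extra growth in $\kappa_0$ restores a strict inequality $\mu(t)=o(\delta(t))$ and allows the estimate to close --- but $\tilde k$, and hence the resulting time $T_0$, depend on $f_0$ itself and not merely on its norm (see Remark~\ref{R:1.4} and Remark~\ref{R:3.8}). Your proposal misses this step entirely; without it, the scheme you describe would prove the theorem only for $a>1/3$.
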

\begin{remark}\label{R:1.4}
The case $a=\frac{1}{3}$ corresponds to a limiting case. 
In particular, the time of existence does not depend only on the norm of $f_0$ but also on $f_0$ itself. 
More precisely, we will estimate the solution for a norm whose definition depends on~$f_0$  
(this can be understood by looking at Lemma~$\ref{L:critical}$ and Remark~$\ref{R:3.8}$). 
\end{remark}
We now give a global in time well-posedness result under a smallness condition on the following quantity:
\be\label{n141}
\begin{aligned}
\lA f_0\rA_{\tdm,\frac{1}{3}}^2&\defn \int_{\xR} |\xi|^{3} \log(4+|\xi|)^{\frac{2}{3}}\bla \hat{f_0}(\xi)\bra^2\dxi\\
&\sim\iint_{\xR^2} \frac{\la 2f_0(x)-f_0(x+h)-f_0(x-h)\ra^2}{|h|^{3}}
\left(\log\left(4+\frac{1}{|h|^2}\right)\right)^{\frac{2}{3}}\frac{\dx\dh}{|h|}\cdot
\end{aligned}
\ee
\begin{theorem}[global well-posedness]\label{theo:main2}
There exists a positive constant $c_0$ such that, for all 
initial data $f_0$ in $\mathcal{H}^{3/2,1/3}(\xR)$ satisfying 
\be\label{i10}
\lA f_0\rA_{\tdm,\frac13}\left( \lA f_0\rA_{L^2}^2+1\right)  \leq  c_0,
\ee
the Cauchy problem for the Muskat equation~\e{eq2.1b} has a unique solution
$$
f\in C^0\big([0,+\infty);\mathcal{H}^{\tdm,\frac{1}{3}}(\xR)\big)
\cap L^2\big(0,+\infty;H^{2}(\xR)\big).
$$ 
\end{theorem}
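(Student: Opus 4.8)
The plan is to deduce Theorem~\ref{theo:main2} from the local existence statement of Theorem~\ref{theo:main} by supplementing it with a global-in-time a priori estimate and a continuation argument; the only genuinely new ingredient is the a priori estimate, which must be established in the regime $a=\tfrac13\le\tfrac12$ where $f$ is not Lipschitz. I would first record the two low-order bounds available for smooth enough solutions of~\e{eq2.1b}: the $L^\infty$ maximum principle $\lA f(t)\rA_{L^\infty}\le\lA f_0\rA_{L^\infty}$ and the $L^2$ bound $\lA f(t)\rA_{L^2}\le\lA f_0\rA_{L^2}$, both obtained by multiplying the finite-difference formulation~\e{eq2.1b} by $f$, respectively by a test function, and using the sign of the resulting quadratic form. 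The first keeps the low-frequency part of the $\mathcal{H}^{\tdm,\frac13}$ norm under control; the second is exactly what makes the factor $\lA f_0\rA_{L^2}^2+1$ in~\e{i10} propagate in time.

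The core is an energy identity for the homogeneous logarithmic seminorm
\[
E(f)\defn\int_{\xR}|\xi|^{3}\log(4+|\xi|)^{\frac23}\bla\hat f(\xi)\bra^2\dxi,
\]
which by~\e{n141} is equivalent to the finite-difference functional appearing there. Differentiating $E(f(t))$ along~\e{eq2.1b}, the leading contribution gives a coercive parabolic term $\mathcal{D}(f)$ controlling, up to the logarithmic weights, $\lA f\rA_{\dot H^2}^2$, and what is left is a collection $\mathcal{R}(f)$ of nonlinear remainder terms, so that
\[
\fract E(f)+\mathcal{D}(f)=\mathcal{R}(f).
\]
The decisive estimate — the one place where the structure of the Muskat equation is really used rather than abstract parabolic theory — is to bound $\mathcal{R}(f)$ \emph{without} invoking $\lA\Delta_\alpha f\rA_{L^\infty}$. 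I expect this to be carried out exactly as for the critical estimates of Section~3 (cf.\ Lemma~\ref{L:critical}): one exploits the smoothing effect of the denominator $1+(\Delta_\alpha f)^2$ and commutator-type cancellations, together with product and interpolation inequalities in the $\mathcal{H}$-scale, to trade the forbidden factor $\lA\Delta_\alpha f\rA_{L^\infty}$ for a \emph{fraction} $2a=\tfrac23$ of a logarithm, obtaining a bound with exactly the cubic structure underlying~\e{i10}, namely
\[
\mathcal{R}(f)\le C\,E(f)^{1/2}\big(\lA f\rA_{L^2}^2+1\big)\,\mathcal{D}(f).
\]

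Granting this bound, the argument closes by a continuity/bootstrap scheme. Choose $c_0$ so that $Cc_0\le\tfrac12$; then~\e{i10} gives $CE(f_0)^{1/2}(\lA f_0\rA_{L^2}^2+1)\le\tfrac12$, so as long as~\e{i10} holds along the flow one has $\fract E(f)+\tfrac12\mathcal{D}(f)\le0$; hence $E(f(t))$ is non-increasing, and since $\lA f(t)\rA_{L^2}$ is non-increasing as well, the condition~\e{i10} is self-improving and remains valid throughout the interval of existence. Since therefore $f(t_0)$ satisfies~\e{i10} at every $t_0<T^*$, and since — as one checks by inspecting the proof of Theorem~\ref{theo:main} — under~\e{i10} the local time of existence is bounded below by a universal $\tau_0>0$ (the $f_0$-dependence emphasised in Remark~\ref{R:1.4} being caused by a large-data threshold effect that is absent here), restarting at $t_0=T^*-\tau_0/2$ would extend the solution past a finite $T^*$; hence $T^*=+\infty$. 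Integrating the differential inequality over $[0,\infty)$ then gives $\sup_{t\ge0}E(f(t))\le E(f_0)$ and $\int_0^\infty\mathcal{D}(f)\,\ds\les E(f_0)$, which together with the local theory yield the existence, uniqueness, and regularity asserted in Theorem~\ref{theo:main2}.

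The hardest step is the nonlinear bound $\mathcal{R}(f)\le CE(f)^{1/2}(\lA f\rA_{L^2}^2+1)\mathcal{D}(f)$ in the non-Lipschitz regime: each remainder term must be made to release a full derivative of dissipation and one power of the critical seminorm, while only a fraction $2a$ of a logarithm is available to offset the missing $L^\infty$ control of the slopes $\Delta_\alpha f$, and $a=\tfrac13$ is precisely the threshold at which the attendant dyadic summations — equivalently, the relevant interpolation exponents — still converge. This threshold phenomenon is also the reason why, as in the local theorem (Remark~\ref{R:1.4}), one may be forced to run these estimates for a functional whose precise definition is tailored to $f_0$ through Lemma~\ref{L:critical}; what the smallness~\e{i10} buys is that $E$ is non-increasing from the outset, so that the clean continuity argument above applies.
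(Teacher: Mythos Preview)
Your overall strategy --- energy estimate plus bootstrap under smallness --- is the right one and matches \S\ref{S:3.3}. However, the key inequality you posit, $\mathcal{R}(f)\le CE(f)^{1/2}(\lA f\rA_{L^2}^2+1)\mathcal{D}(f)$ with a \emph{coercive} $\mathcal{D}$, is not what actually holds, and the gap is precisely the non-Lipschitz difficulty you flag but do not resolve. The energy identity (Lemma~\ref{L:3.4}) produces the \emph{degenerate} dissipation $\int_\xR\bla\D^{2,\phi}f\bra^2/(1+(\partial_xf)^2)\,\dx$, not $B=\blA \D^{2,\phi}f\brA_{L^2}^2$; since $\partial_xf\notin L^\infty$ this is strictly weaker than $B$ and not uniformly coercive. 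The paper quantifies this degeneracy via the interpolation~\e{linf'}, giving $\lA\partial_xf\rA_{L^\infty}\les\log\big(4+B/(A+\lA f_0\rA_{L^2}^2)\big)^{(1-2a)/2}(A^{1/2}+\lA f_0\rA_{L^2})$, so the effective dissipation is $\delta B$ with $\delta^{-1}\sim1+\log(\cdots)^{1-2a}(A+\lA f_0\rA_{L^2}^2)$ --- a factor that depends on the unknown $B$. Independently, the remainder $Q(f)$ carries a logarithmic \emph{gain} $\mu=\kappa(B/A)^{-1}\sim\log(\cdots)^{-a}$ via~\e{Z20'}. The actual differential inequality is therefore~\e{Z21'}, and the absorption step (see~\e{Z5}--\e{Z6}) hinges on these two log-powers matching: one needs $a\ge1-2a$, i.e.\ $a\ge\tfrac13$. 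Your exponent ``$2a=\tfrac23$'' does not correspond to either side of this balance, and a single coercive $\mathcal{D}$ as you describe it does not exist here.

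Two smaller points. Lemma~\ref{L:critical} plays no role in the global small-data result; it constructs a data-dependent weight to handle \emph{local} existence for arbitrary data at the borderline $a=\tfrac13$ (\S\ref{S:critical}), whereas Proposition~\ref{pro1} uses the plain weight $\kappa(r)=\log(4+r)^{1/3}$ directly. And the paper does not run a continuation argument on the solution $f$ itself but works throughout with the approximate solutions $f_n$ of~\e{A3}, proving the global uniform bounds~\e{nUG1}--\e{nUG2} directly for them and then passing to the limit; your alternative via a uniform lower bound on the local existence time would also work, but justifying that uniform lower bound under~\e{i10} amounts to re-proving the same absorption inequality.
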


\subsection{Strategy of the proof and plan of the paper}
To prove Theorem~\ref{theo:main}, the key point is 
to work with critical-type norms. 
In this direction, we will prove some technical estimates which we think are of independent interest. 
To state the main consequence of the latter, let us 
introduce a bit of notation. We denote by $\D^{s,\phi}$ the Fourier multiplier $(-\Delta)^{s/2}\phi(D_x)$. 
Then, our main technical estimate asserts that, for any $\phi$, there holds
\be\label{i5}
\fract \blA \D^{\tdm,\phi}f\brA_{L^2}^2
+ \int_\xR \frac{\bla\D^{2,\phi}f\bra^2}{1+(\partial_x f)^2}\dx\le C Q(f) \blA\D^{2,\phi}f\brA_{L^2},
\ee
where
\begin{align*}
Q(f)&= \left(\lA f\rA_{\dot H^2}+\lA f\rA_{\dot H^{\frac{7}{4}}}^2\right) 
\blA\D^{\tdm,\phi}f\brA_{L^2}
+\blA\D^{\frac74,\phi}f\brA_{L^2}
\lA f\rA_{H^{\frac74}}\\
&\quad+\left(\lA f\rA_{H^{\frac{19}{12}}}^{3/2}+\lA f\rA_{\dot H^{\frac74}}^{1/2}\right)
\blA\D^{\frac{7}{4},\phi^{2}}f\brA^{1/2}_{L^2}
\lA f\rA_{\dot H^{\frac74}}.
\end{align*}
The crucial point is that the quantity $Q(f)$ does not involve the $W^{1,\infty}$-norm of~$f$.

To prove~\e{i5}, notable technical aspects include 
the proof of new commutator estimates and 
the systematic use of Triebel-Lizorkin norms. We develop these tools in~$\S\ref{S:2}$. 
With these results in hands, we begin in~$\S\ref{S:3}$ by introducing 
a sequence of approximate equations by a Galerkin type decomposition, which admit 
approximate solutions $(f_n)_{n\in \xN}$. 
Then we prove that the estimate \e{i5} holds for these approximate systems, uniformly in~$n$. 

We then conclude the proof of Theorem~\ref{theo:main} in two steps, 
by applying~\e{i5} with some special choice for $\phi$, satisfying $\phi(\xi)\sim (\log(4+\la\xi\ra))^{a}$. As already mentioned, one of the main difficulty is that the 
factor $1+(\partial_x f)^2$ which appears in the left hand-side of~\e{i5} is not controlled in $L^\infty_{t,x}$. To overcome this difficulty, 
we prove some new interpolation inequalities to estimate 
the factor $1+(\partial_x f)^2$, using the fractional logarithmic norms, by some quantity which is not bounded in time. To be more specific, assume that 
$\phi(\xi)\sim (\log(4+\la\xi\ra))^{a}$, 
and introduce the quantities
\begin{align*}
A(t)&=\blA \D^{\tdm,\phi}f(t)\brA_{L^2}^2,\\
B(t)&=\blA \D^{2,\phi}f(t)\brA_{L^2}^2.
\end{align*}
In \S\ref{S:3.2}, we will prove 
after a fair amount of bookkeeping an estimate of the form
$$
\fract A(t)+C_1\delta(t)B(t)\leq C_2 \left(\log\left(\frac{B(t)}{A(t)}\right)\right)^{-a} \left( \sqrt{A(t)}+A(t) \right)B(t),
$$
where
$$
\delta(t)\sim\left(1+ \log\left(4+\frac{B(t)}{A(t)+ \Vert f_0\Vert_{L^2}^2}\right)^{1-2a}\left( A(t)+ \Vert f_0\Vert_{L^2}^2\right)\right)^{-1}.
$$
Notice that $\delta(t)$ is not bounded from below so that the left-hand side is insufficient to control $B(t)$. However, to apply a Gronwall type inequality 
it will suffice to have $a> 1-2a$, that is $	a> \frac{1}{3}$, see~\ref{S:3.3}. The limiting case $a=\frac{1}{3}$ will be studied in~\S\ref{S:critical} 
by introducing a more general weight $\phi$ which is not 
a fraction of a logarithm, and whose definition depends on the initial data itself. 
This gives uniform bounds 
in $L^\infty_t(\mathcal{H}^{s,a}(\xR))$ for any $a\ge \frac{1}{3}$, 
for the approximate solutions $f_n$, 
from which we deduce the existence of a solution to the Muskat equation by extracting a subsequence. The uniqueness is proved in~\S\ref{S:3.5} by similar arguments, using again some delicate interpolation inequalities to handle the lack of Lipschitz control.

\subsection*{Notations}
Most notations are introduced in the next section. 
In particular, the definitions of Sobolev, Besov and Triebel-Lizorkin spaces are recalled in~\S\ref{S:2.1}.
To avoid possible confusions in the notations, we mention that, throughout the paper:
\begin{itemize}
\item We will sometimes 
write $\log (4+|\xi|)^a$ as a 
short notation for $(\log(4+|\xi|))^a$. 
\item All functions are assumed to be real-valued in this paper. 
Nevertheless, we will often use the complex-modulus notation in writing $\la \Delta_\alpha f\ra^2$ or $\la \alpha\ra^2$ in many identities, since we think 
it might help the reader to read the latter.
\item Given~$0\le t\le T$, a normed space~$X$
and a function~$\varphi=\varphi(t,x)$ defined on~$[0,T]\times \xR$ with values in~$X$, 
we denote by~$\varphi(t)$ the function~$x\mapsto
\varphi(t,x)$, and $\lA \varphi\rA_X$ is a short notation for the time dependent function 
$t\mapsto \lA \varphi(t)\rA_X$. 
\end{itemize}
\section{Nonlinearity and fractional derivatives in the Muskat problem}\label{S:2}

We now develop the linear and nonlinear tools needed to study the Muskat problem 
in the spaces $\mathcal{H}^{3/2,a}(\xR)$. 
The first paragraph is a review consisting of various notations and usual results about 
Besov and Triebel-Lizorkin spaces, 
which serve as the requested background for what follows. 
Then we study in $\S\ref{S:2.2}$ Fourier multipliers of the form 
$\D^s \phi(\la D_x\ra)$ for some symbols $\phi(|\xi|)$ 
which generalize the fractional logarithm $(\log(4+\la \xi\ra))^a$ introduced in the introduction. 
In particular we give a characterization of the space 
$$
\mathcal{H}^{s,\phi}(\xR)=\{ f\in L^2(\xR)\, :\, \D^s \phi(\la D_x\ra)f\in L^2(\xR)\},
$$
in terms of modified Gagliardo semi-norms. 
Then in \S\ref{S:2.3} we recall the paralinerization formula 
for the Muskat equation from~\cite{Alazard-Lazar}. 
The core of this section is \S\ref{S:2.4}, in which we prove technical ingredients 
needed to estimate the coefficients of the latter paralinearization formula in terms of the $\mathcal{H}^{3/2,\phi}(\xR)$-norms.

\subsection{Triebel-Lizorkin norms}\label{S:2.1}
This work builds on the analysis of the Muskat equation by C{\'o}rdoba and Lazar~\cite{Cordoba-Lazar-H3/2} and Alazard and Lazar~\cite{Alazard-Lazar}, which introduced the use of techniques related to Besov spaces in this problem (see 
also Gancedo and Lazar~\cite{Gancedo-Lazar-H2}). 
Here we will also use Triebel-Lizorkin spaces. 
For ease of reading, we recall various notations and results about these spaces,  
which will be used continually in the rest of the paper. 

Given a function $f\colon\xR\to\xR$, an integer $m\in\xN\setminus\{0\}$ 
and a real number $h\in\xR$,
 we define the finite differences 
$\delta_h^mf$ as follows:
$$
\delta_hf(x)=f(x)-f(x-h),\qquad \delta_h^{m+1}f=\delta_h(\delta_h^mf).
$$

\begin{definition}
Consider an integer $m\in\xN\setminus\{0\}$, a real number $s\in [m-1,m)$ and 
two real numbers $(p,q)$ in $[1,\infty)^2$. 
The homogeneous Triebel-Lizorkin space $\dot F^{s}_{p,q}(\xR)$ 
consists of those tempered distributions $f$
whose Fourier transform is integrable near the origin and such that 
\be\label{n5}
\lA f\rA_{\dot F^s_{p,q}}
=\left(\int_{\mathbb{R}}\left(\int_{\mathbb{R}} \la\delta_h^mf(x)\ra^q \frac{\dh}{|h|^{1+qs}}\right)^{\frac{p}{q}}\dx\right)^{\frac{1}{p}}<+\infty.
\ee
\end{definition}
\begin{remark}$i)$ 
We refer to Triebel~\cite[$\S 2.3.5$]{Triebel-TFS} for historical comments 
about these spaces, and Triebel~\cite[section~$3$]{Triebel1988} 
for the equivalence between this definition and 
other ones including the Littlewood-Paley decomposition. 

$ii)$ For the sake of comparison, recall that the Besov space $\dot B^{s}_{p,q}(\xR)$ consists of those 
tempered distributions $f$
whose Fourier transform is integrable near the origin and such that
\be\label{defi:B-spaces}
\lA f \rA_{\dot B^{s}_{p,q}} =
\left(\int_{\mathbb{R}}\left( \int_{\xR}\la\delta_h^mf(x)\ra^p\dx\right)^{\frac{q}{p}}
\frac{\dh}{|h|^{1+qs}}\right)^{\frac{1}{q}}<+\infty.
\ee
Notice that Besov defined his spaces 
in this way, that is with finite differences, see~\cite{Besov}.
\end{remark}
For easy reference, we recall two results allowing to compare the 
Triebel-Lizorkin semi-norms to the homogeneous and non-homogeneous Sobolev norms, 
which are defined by
\begin{equation}\label{defi:Sobolev}
\left\lVert u \right\rVert_{\dot{H}^{\sigma}}^{2} \defn (2\pi)^{-1} 
\int_{\xR} \la\xi\ra^{2\sigma} |\hat{u}(\xi)|^2\dxi,\quad 
\left\lVert u \right\rVert_{H^\sigma}^{2} \defn (2\pi)^{-1} 
\int_{\xR} (1+|\xi|^2)^{\sigma} |\hat{u}(\xi)|^2\dxi,
\end{equation}
where~$\widehat{u}$ is the Fourier transform of~$u$.

Recall that $\lA \cdot\rA_{\dot H^{s}}$ and $\lA \cdot\rA_{\dot{F}^s_{2,2}}$ 
are equivalent. 
Moreover, for $s\in (0,1)$,
\be\label{SE0}
\lA u\rA_{\dot H^{s}}^2=\frac{1}{4\pi c(s)}\lA u\rA_{\dot{F}^s_{2,2}}^2
\quad\text{with}\quad c(s)=\int_\xR \frac{1-\cos(h)}{\la h\ra^{1+2s}}\dh.
\ee

We will also extensively use the following Sobolev embeddings: 
For any $2<p_1< \infty$ and any $q\leq\infty$, if $s-\frac{1}{2}=s_1-\frac{1}{p_1}$ then 
\begin{equation}\label{FB}
\lA f\rA_{\dot F^{s_1}_{p_1,q}(\mathbb{R})}\leq C \lA f\rA_{\dot H^{s}(\mathbb{R})}.
\end{equation}

\subsection{Some special Fourier multipliers}\label{S:2.2}

Let us introduce a bit of notation which will be used 
continually in the rest of the paper. 
\begin{definition}\label{defi:D}
Consider a real number $s\ge 0$ and 
a function $\phi\colon [0,\infty)\to (0,\infty)$ satisfying the doubling condition
$\phi(2r)\leq c_0\phi(r)$ for any $r\geq 0$ and some constant $c_0>0$. 
Then we may define $|D|^{s,\phi}$ as the Fourier multiplier with symbol $|\xi|^s\phi(|\xi|)$. More precisely,
\begin{equation*}
\mathcal{F}( |D|^{s,\phi}f)(\xi)=|\xi|^s\phi(|\xi|) \mathcal{F}(f)(\xi).
\end{equation*}
\end{definition}

We shall consider operators $\D^{s,\phi}$ for some special functions $\phi$ depending on some 
function $\kappa\colon [0,\infty)\to (0,\infty)$, of the form
\begin{equation}\label{n10}
\phi(\lam)=\int_{0}^{\infty}\frac{1-\cos(h)}{h^2} \kappa\left(\frac{\lam}{h}\right) \dh, \quad \text{for }\lambda\ge 0.
\end{equation}
Before we explain the reason to introduce these functions, let us clarify the assumptions on $\kappa$ which will be needed later on. 

\begin{assumption}\label{A:kappa}
Throughout this paper, we always require that 
$\kappa\colon[0,\infty) \to [1,\infty)$ satisfies the following three assumptions: 
\begin{enumerate}[$({{\rm H}}1)$]
\item\label{H1} $\kappa$ is increasing and $\lim\kappa(r)=\infty$ when $r$ goes to $+\infty$;
\item\label{H2} there is a positive constant $c_0$ such that $\kappa(2r)\leq c_0\kappa(r)$ for any $r\geq 0$; 
\item\label{H3} the function $r\mapsto \kappa(r)/\log(4+r)$ is decreasing on  $[0,\infty)$.
\end{enumerate}
\end{assumption}
\begin{remark}
$i)$ The main example is the function $\kappa_a(r)=(\log(4+r))^a$ with $a\in [0,1]$. 
However, we shall see that, to include the critical case $a=\frac{1}{3}$ in Theorem~$\ref{theo:main}$, 
we must consider more general functions (see Section~$\ref{S:critical}$). 

$ii)$ To clarify notations, we mention that we choose the natural logarithm so that $\log(e)=1$. 
In assumption~$({\rm H}\ref{H3})$, 
the choice of the constant $4$ is purely technical (it is used only to prove~\e{n97} below). 
\end{remark}

There is one observation
that will be useful below. We will prove that $\phi\sim \kappa$, 
which means that
\be\label{n60}
c\kappa(\lam)\le \phi(\lam)\le C \kappa(\lam),
\ee
for some positive constants $c,C$. In particular, $\phi$ satisfies the doubling condition 
$\phi(2r)\leq c_0\phi(r)$ and we may define the Fourier multiplier $\D^{s,\phi}=\D^{s}\phi(\la D_x\ra)$, 
as in Definition~\ref{defi:D}. 
Although $\kappa$ and $\phi$ are equivalent, 
we will use them for different purposes. 
We will use $\phi$ when we prefer to work with the 
frequency variable (Fourier analysis), while we use $\kappa$ when 
the physical variable is more convenient. 
The next two results will be useful later on to freely switch computations between the 
frequency and physical settings. 

\begin{lemma}\label{Z12}
Assume that $\phi$ is as defined in~\e{n10} for some function $\kappa$ 
satisfying Assumption~$\ref{A:kappa}$. Then, for all 
$g\in \mathcal{S}(\xR)$, there holds
\begin{equation*}
\D^{1,\phi}g(x)=\frac{1}{4}\int_{\xR}\frac{2g(x)-g(x+h)-g(x-h)}{h^2}
\kappa\left(\frac{1}{|h|}\right) \dh.
\end{equation*}
\end{lemma}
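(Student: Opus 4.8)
The plan is to compute the Fourier symbol of the right-hand side operator and check it equals $|\xi|\phi(|\xi|)$, the symbol of $\D^{1,\phi}$. Write $T g(x) = \frac14\int_\xR \frac{2g(x)-g(x+h)-g(x-h)}{h^2}\kappa\!\left(\frac{1}{|h|}\right)\dh$. First I would justify that this integral converges absolutely for $g\in\mathcal S(\xR)$: near $h=0$ one has $|2g(x)-g(x+h)-g(x-h)|\les |h|^2 \lA g''\rA_{L^\infty}$, and by assumption $({\rm H}\ref{H3})$ one has $\kappa(1/|h|)\les \log(4+1/|h|)$, so the integrand is $O(\log(4+1/|h|))$ near the origin, which is integrable; for $|h|$ large the numerator is bounded and $\kappa(1/|h|)\to\kappa(0^+)<\infty$ is bounded by monotonicity ($({\rm H}\ref{H1})$), and $h^{-2}$ is integrable at infinity. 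So $Tg$ is well-defined, and $Tg\in L^2$ once we identify its symbol.

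Next I would take the Fourier transform. Using $\widehat{g(\cdot+h)}(\xi)=e^{ih\xi}\hat g(\xi)$ and $\widehat{g(\cdot-h)}(\xi)=e^{-ih\xi}\hat g(\xi)$, we get, at least formally,
\[
\widehat{Tg}(\xi)=\frac14\left(\int_\xR \frac{2-e^{ih\xi}-e^{-ih\xi}}{h^2}\kappa\!\left(\frac1{|h|}\right)\dh\right)\hat g(\xi)
=\left(\int_\xR \frac{1-\cos(h\xi)}{2h^2}\kappa\!\left(\frac1{|h|}\right)\dh\right)\hat g(\xi).
\]
The interchange of $\int_\xR dh$ and the Fourier integral is justified by Fubini once one has the absolute integrability bound above together with $\hat g\in L^1\cap L^\infty$ (Schwartz). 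It remains to evaluate $m(\xi):=\int_\xR \frac{1-\cos(h\xi)}{2h^2}\kappa(1/|h|)\dh$. By symmetry in $h\mapsto -h$ this is $\int_0^\infty \frac{1-\cos(h\xi)}{h^2}\kappa(1/h)\dh$. For $\xi\neq 0$ substitute $h\mapsto h/|\xi|$ (using $\cos(h\xi)=\cos(h|\xi|)$): then $\dh$ becomes $\dh/|\xi|$, $h^{-2}$ becomes $|\xi|^2 h^{-2}$, and $\kappa(1/h)$ becomes $\kappa(|\xi|/h)$, giving
\[
m(\xi)=|\xi|\int_0^\infty \frac{1-\cos(h)}{h^2}\,\kappa\!\left(\frac{|\xi|}{h}\right)\dh = |\xi|\,\phi(|\xi|),
\]
by the definition \e{n10} of $\phi$. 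Hence $\widehat{Tg}(\xi)=|\xi|\phi(|\xi|)\hat g(\xi)=\mathcal F(\D^{1,\phi}g)(\xi)$, which is the claim.

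The only genuinely delicate point is the rigorous justification of Fubini's theorem in the change-of-order step, i.e.\ checking that $\iint \big|\frac{(1-\cos(h\xi))}{h^2}\kappa(1/|h|)\hat g(\xi)\big|\,\dh\,\dxi<\infty$; this follows by splitting $|h|\le 1$ and $|h|\ge 1$, using $1-\cos(h\xi)\les \min(1, h^2\xi^2)$, the growth bound $\kappa(1/|h|)\les \log(4+1/|h|)$ from $({\rm H}\ref{H3})$ for small $h$ and boundedness of $\kappa$ near $0$ for large $h$ from $({\rm H}\ref{H1})$, and the rapid decay of $\hat g$. Everything else — the substitution and the symmetrization — is elementary, and the doubling condition $({\rm H}\ref{H2})$ is not actually needed here beyond guaranteeing (via \e{n60}) that $\D^{1,\phi}$ is a sensible operator.
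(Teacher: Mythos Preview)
Your proof is correct and follows essentially the same approach as the paper: compute the Fourier transform of the right-hand side, use the substitution $h\mapsto h/|\xi|$ to recognize the definition~\e{n10} of $\phi$, and conclude that the symbol is $|\xi|\phi(|\xi|)$. The paper's proof is terser and omits the convergence and Fubini justifications that you carefully supply, but the argument is identical.
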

\begin{proof}
Notice that the Fourier transform of the function
\begin{align*}
\int_\xR \frac{2g(x)-g(x+h)-g(x-h)}{h^2}\kappa\left(\frac{1}{|h|}\right)\dh,
\end{align*}
is given by
$$
\left(\int_\xR \frac{2-2\cos(h\xi)}{h^2}\kappa\left(\frac{1}{|h|}\right) \dh\right)
\hat g(\xi).
$$
Therefore
$$
\left(4|\xi|  \int_0^\infty \frac{1-\cos(h)}{h^2} \kappa\left(\frac{|\xi|}{|h|}\right)  \dh\right) 
\hat{g}(\xi)=4\phi(\xi) \la\xi\ra\hat{g}(\xi)
=4\widehat{\D^{1,\phi}g}(\xi),
$$
equivalent to the wanted result. 
\end{proof}

The following result states 
that $\phi$ and $\kappa$ are equivalent and also gives the equivalence of some 
semi-norms.
\begin{proposition} \label{Z9}
Assume that $\phi$ is as defined in~\e{n10} for some function $\kappa$ 
satisfying Assumption~$\ref{A:kappa}$.

$i)$ There exist two constants $c,C>0$ such that, for all $\lambda\ge 0$,
\be\label{n60b}
c\kappa(\lam)\le \phi(\lam)\le C \kappa(\lam).
\ee

$ii)$ Given $g\in \mathcal{S}(\xR)$, define the semi-norm
$$
\lA g\rA_{s,\kappa}=\left(\iint_{\xR^2} \la 2g(x)-g(x+h)-g(x-h)\ra^2
\left(\frac{1}{|h|^{s}}\kappa\left(\frac{1}{|h|}\right)\right)^2\frac{\dx\dh}{|h|}\right)^\mez.
$$
Then, for all $1<s<2$, there exist two constants $c,C>0$ such that, for 
all $g\in \mathcal{S}(\xR)$,
\begin{equation*}
c\int_{\xR}  \la \D^{s,\phi}g(x)\ra^2\dx\le \lA g\rA_{s,\kappa}^2
\le C\int_{\xR}\la \D^{s,\phi}g(x)\ra^2\dx.
\end{equation*}
\end{proposition}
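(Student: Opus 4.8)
The plan is to prove Proposition~\ref{Z9} in two parts, establishing $(i)$ first and then deducing $(ii)$ from it, much as in the proof of Lemma~\ref{Z12}.

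\textbf{Proof of $(i)$.} I would start from the defining formula~\e{n10}:
$$
\phi(\lam)=\int_0^\infty \frac{1-\cos h}{h^2}\,\kappa\!\left(\frac{\lam}{h}\right)\dh.
$$
Since $(1-\cos h)/h^2$ is integrable on $(0,\infty)$ and $\kappa\ge 1$, we immediately get the lower bound $\phi(\lam)\ge c$ for some absolute constant $c>0$; but we need $\phi(\lam)\ge c\,\kappa(\lam)$. The idea is to restrict the integral to the region where $h\le 1$, so that $\lam/h\ge\lam$ and hence $\kappa(\lam/h)\ge\kappa(\lam)$ by monotonicity ($(\mathrm{H}\ref{H1})$). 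This yields
$$
\phi(\lam)\ge \kappa(\lam)\int_0^1\frac{1-\cos h}{h^2}\dh = c\,\kappa(\lam).
$$
For the upper bound, I would split $\int_0^\infty = \int_0^1 + \int_1^\infty$. On $0\le h\le 1$ one has $\kappa(\lam/h)\le\kappa(\lam)\log(4+\lam/h)/\log(4+\lam)\le\kappa(\lam)\log(4+1/h)/\log 4$ using the monotonicity in $(\mathrm{H}\ref{H3})$ (that $r\mapsto\kappa(r)/\log(4+r)$ is decreasing, evaluated at $\lam/h\ge\lam$), and $\int_0^1\frac{1-\cos h}{h^2}\log(4+1/h)\dh<\infty$ since the integrand behaves like $\log(1/h)$ near $0$; this contributes $\le C\kappa(\lam)$. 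On $h\ge 1$ one has $\lam/h\le\lam$, so $\kappa(\lam/h)\le\kappa(\lam)$ directly, and $\int_1^\infty\frac{1-\cos h}{h^2}\dh\le\int_1^\infty h^{-2}\dh=1$; this also contributes $\le C\kappa(\lam)$. Adding gives $\phi(\lam)\le C\kappa(\lam)$.

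\textbf{Proof of $(ii)$.} Here I would relate both sides to their Fourier representations. By Plancherel and the computation already carried out in the proof of Lemma~\ref{Z12}, for $g\in\mathcal S(\xR)$,
$$
\int_\xR |\D^{s,\phi}g(x)|^2\dx = \frac{1}{2\pi}\int_\xR |\xi|^{2s}\phi(|\xi|)^2\,|\hat g(\xi)|^2\dxi,
$$
and similarly, writing out $2g(x)-g(x+h)-g(x-h)$ on the Fourier side as $(2-2\cos(h\xi))\hat g(\xi)$ and using $\int_\xR f(x)\overline{g(x)}\,\dx=\tfrac1{2\pi}\int\hat f\overline{\hat g}$ together with Fubini,
$$
\lA g\rA_{s,\kappa}^2 = \frac{1}{2\pi}\int_\xR |\hat g(\xi)|^2 \left(\int_\xR \frac{(2-2\cos(h\xi))^2}{|h|^{1+2s}}\kappa\!\left(\frac{1}{|h|}\right)^2\dh\right)\dxi.
$$
So it suffices to show that the inner kernel
$$
\Psi(\xi)\defn \int_\xR \frac{(2-2\cos(h\xi))^2}{|h|^{1+2s}}\kappa\!\left(\frac{1}{|h|}\right)^2\dh
$$
is comparable to $|\xi|^{2s}\phi(|\xi|)^2$, uniformly for $\xi\in\xR$ (and $1<s<2$ fixed). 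By the change of variables $h\mapsto h/|\xi|$ we get $\Psi(\xi)=|\xi|^{2s}\int_\xR\frac{(2-2\cos h)^2}{|h|^{1+2s}}\kappa\!\left(\frac{|\xi|}{|h|}\right)^2\dh$, so after extracting the scaling factor $|\xi|^{2s}$ the remaining problem is to show
$$
\int_\xR \frac{(2-2\cos h)^2}{|h|^{1+2s}}\,\kappa\!\left(\frac{|\xi|}{|h|}\right)^2\dh \;\asymp\; \phi(|\xi|)^2.
$$
The upper bound: since $\kappa$ satisfies the doubling condition $(\mathrm{H}\ref{H2})$, one has $\kappa(\lam/h)^2\le C\,\kappa(\lam/(2h))\,\kappa(\lam/h)$ — more usefully, I would compare $\kappa(\lam/h)^2$ against the square of the quantity defining $\phi$ by a Cauchy–Schwarz / doubling argument. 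Concretely, $(2-2\cos h)^2 \le 4(1-\cos h)$ for all $h$ (since $1-\cos h\le 2$), so on the one hand $\int\frac{(2-2\cos h)^2}{|h|^{1+2s}}\kappa(\lam/h)^2\dh \le 4\int\frac{1-\cos h}{|h|^{1+2s}}\kappa(\lam/h)^2\dh$. Using $1<s<2$ and the doubling property to control $\kappa(\lam/h)^2$ against $\big(\int\frac{1-\cos h'}{h'^2}\kappa(\tfrac\lam{h'})\dh'\big)^2=\phi(\lam)^2$ via the equivalence $\phi\sim\kappa$ from part $(i)$ (which reduces everything to $\kappa$), both the kernel above and $\phi(\lam)^2\sim\kappa(\lam)^2$ become comparable to $\int\frac{1-\cos h}{|h|^{1+2s}}\kappa(\lam/h)^2\dh$ times harmless constants. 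For the lower bound, I would restrict the integral defining $\Psi$ to a favorable range of $h$ (e.g. $h\in[1,2]$, where $2-2\cos h$ is bounded below and $\kappa(\lam/h)\ge c\,\kappa(\lam)$ by monotonicity and doubling), obtaining $\gtrsim\kappa(\lam)^2\gtrsim\phi(\lam)^2$.

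\textbf{Main obstacle.} The delicate point is part $(ii)$: reducing both the Gagliardo-type kernel and $\phi^2$ to a common comparable quantity requires juggling the three hypotheses on $\kappa$ simultaneously — monotonicity to bound $\kappa(\lam/h)$ from below on a fixed $h$-window, the doubling condition to absorb the loss of comparing $\kappa(\lam/h)^2$ with $\big(\int\cdots\kappa(\lam/h')\dh'\big)^2$, and the logarithmic bound $(\mathrm{H}\ref{H3})$ to guarantee integrability of the tail $\int_0^1 h^{-1+2(1-s)}\log(1/h)^2\,\kappa(\cdots)^2\dh$ near $h=0$ when $1<s<2$ (note $-1+2(1-s) = 1-2s \in(-3,-1)$, so near $0$ the bound $2-2\cos h\lesssim h^2$ gives integrand $\lesssim h^{4}\,h^{1-2s}\,\kappa(\cdots)^2 = h^{5-2s}\kappa(\cdots)^2$ which is integrable since $5-2s>1$ — so in fact the convergence near $0$ is automatic, and the real care is needed at $h=\infty$, where one uses $s>1$ to get $h^{1-2s}$ integrable while $\kappa(\lam/h)\le\kappa(\lam)$ stays bounded). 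Keeping track of which hypothesis is used where, and making sure the constants depend only on $s$, $c_0$ and not on $\lam$, is the part that requires genuine bookkeeping; everything else is a routine Plancherel-plus-Fubini manipulation.
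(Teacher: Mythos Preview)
Your overall strategy matches the paper's: both parts reduce to pointwise comparison of integral kernels, using Plancherel for $(ii)$, and both exploit the three hypotheses on $\kappa$ in the same roles (monotonicity for the lower bound, $(\mathrm{H}3)$ for the upper bound near $h=0$, $s>1$ for the tail). Your order---proving $(i)$ first and then invoking $\phi\sim\kappa$ to reduce $(ii)$ to a comparison with $\kappa(\lambda)^2$---is a legitimate and slightly cleaner reorganization; the paper does $(ii)$ first and then says $(i)$ follows by similar arguments.

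However, your execution of the upper bound in $(ii)$ contains a genuine slip. The step $(2-2\cos h)^2\le 4(1-\cos h)$ is too crude: after it you are left with $\int_0^1 \frac{1-\cos h}{h^{1+2s}}\kappa(\lambda/h)^2\,\dh \sim \int_0^1 h^{1-2s}\kappa(\lambda/h)^2\,\dh$, and since $1-2s<-1$ for $s>1$ and $\kappa\ge 1$, this diverges near $h=0$ no matter what you do with $\kappa$. You must keep the sharper bound $(2-2\cos h)^2\lesssim h^4$ near the origin (as you in fact do later in your ``Main obstacle'' paragraph), which gives the integrand $h^{3-2s}\kappa(\lambda/h)^2$---note $h^4/h^{1+2s}=h^{3-2s}$, not $h^{5-2s}$---and now $3-2s>-1$ since $s<2$. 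From there, use $(\mathrm{H}3)$ exactly as in $(i)$: for $0<h\le 1$ one has $\log(4+\lambda/h)=\log(4h+\lambda)+|\log h|\le \log(4+\lambda)+|\log h|$, hence
$$
\kappa(\lambda/h)\le \kappa(\lambda)\,\frac{\log(4+\lambda/h)}{\log(4+\lambda)}\le \kappa(\lambda)\Big(1+\tfrac{|\log h|}{\log 4}\Big),
$$
so the integral is $\lesssim \kappa(\lambda)^2\int_0^1 h^{3-2s}(1+|\log h|)^2\,\dh<\infty$. The vague appeal to a ``Cauchy--Schwarz / doubling argument'' should be replaced by this direct computation. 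The paper achieves the same bound via a factorization $\pi_1\pi_2\pi_3$ that separates the $\kappa$, logarithmic, and power contributions, but the underlying content is identical.
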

\begin{proof}We begin by proving statement $ii)$. Let us introduce
\begin{equation*}
{K}(r)=\kappa^2\left(\frac{1}{r}\right)\frac{1}{r^{1+2s}}\qquad (r>0).
\end{equation*}
For any $h\in\xR$, the Fourier transform of $x\mapsto 2g(x)-g(x+h)-g(x-h)$ 
is given by $(2-2\cos(\xi h))\hat{g}(\xi)$. 
Consequently, 
$$
\lA g\rA_{s,\kappa}^2=\iint_{\xR^2}\left| 2g(x)-g(x+h)-g(x-h)\right|^2{K}(|h|)\dx\dh
=\int_\xR I(\xi)\bla \hat{g}(\xi)\bra^2 \dxi,
$$
where
$$
I(\xi)=\frac{2}{\pi}\int_\xR  ( 1-\cos(\xi h))^2  {K}\left(\la h\ra\right) \dh.
$$
We must prove that the integral $I(\xi)$ satisfies
\begin{equation}
\label{Z30} c|\xi|^{2s}\phi(|\xi|)^2\le I(\xi)\le C|\xi|^{2s}\phi(|\xi|)^2,
\end{equation}
for some constant $c,C$ independent of $\xi\in\xR$. 
Let us prove the bound from above. To do so, we use the inequality 
$\la 1-\cos(\theta)\ra\le \min \{ 2,\theta^2\}$ for all $\theta\in \xR$ to obtain
$$
I(\xi)\le \frac{8}{\pi}\int_{|h \xi|\ge 1}{K}\left(\la h\ra\right) \dh
+\frac{2}{\pi}\int_{|h \xi|\le 1}\xi^4h^4{K}\left(\la h\ra\right) \dh.
$$
Now, since $\kappa$ is increasing by assumption, directly from the definition of ${K}$, we have
$$
\int_{|h \xi|\ge 1}{K}\left(\la h\ra\right) \dh\le \left(\kappa(|\xi|)\right)^2
\int_{|h \xi|\ge 1}\frac{\dh}{|h|^{1+2s}}\les \kappa^2(|\xi|)\la \xi\ra^{2s}.
$$
The estimate of the contribution of the integral over $\{|h \xi|\le 1\}$ is more involved. 
To do so, we introduce the following decomposition of the integrand
\be\label{n105}
\xi^4h^4{K}\left(\la h\ra\right)=\xi^4h^4\kappa^2\left(\frac{1}{\la h\ra}\right)\frac{1}{\la h\ra^{1+2s}}
=\pi_1\left(\la h\ra\right)\pi_2\left(\la h\ra\right)\pi_3\left(\la h\ra\right)\frac{\xi^4}{|h|^{s-1}}
\ee
where
\begin{align*}
\pi_1(r)&\defn \frac{\kappa^2\big(\frac{1}{r}\big)}{\log\big(4+\frac{1}{r}\big)^2},\quad
&&\pi_2(r)\defn\frac{\log\big(4+\frac{1}{r}\big)^2}{\log\big(\lambda_0+\frac{1}{r^2}\big)^2},\\
\pi_3(r)&\defn r^{2-s}\left(\log\left(\lambda_0+\frac{1}{r^2}\right)\right)^{2}, &&\lambda_0=\exp\left(\frac{8 }{2-s}\right).
\end{align*}
By assumption on $\kappa$ (see~$({\rm H}\ref{H3})$ in Assumption~\ref{A:kappa}), the function $\pi_1$ is increasing 
and hence
$$
\pi_1(|h|)\le \frac{\kappa^2(|\xi|)}{\log(4+|\xi|)^2}\quad\text{for}\quad |h|\le \frac{1}{|\xi|}\cdot
$$
The function $\pi_2$ is bounded on $[0,+\infty)$ by some harmless constant depending only on $s$. 
Eventually, we claim that the function $\pi_3$ is increasing. Indeed, 
\begin{align*}
\frac{\diff}{\dr}\pi_3(r)&=r^{1-s}\log\left(\lambda_0+\frac{1}{r^2}\right)^{2}\left[2-s-\frac{4}{(\lambda_0 r^2+1)\log\left(\lambda_0+\frac{1}{r^2}\right)}\right]\\&\geq  r^{1-s}\log\left(\lambda_0+\frac{1}{r^2}\right)^{2}\left(2-s-\frac{4}{\log\left(\lambda_0\right)}\right)\\
&=\frac{2-s}{2}r^{1-s}\log\left(\lambda_0+\frac{1}{r^2}\right)^{2}>0.
\end{align*}
It follows that
$$
\pi_3(|h|)\le |\xi|^{s-2}
\log\left(\lambda_0+|\xi|^2\right)^{2}\quad\text{for}\quad |h|\le \frac{1}{|\xi|}\cdot
$$
By combining these bounds about the factors $\pi_j$, we deduce from~\e{n105} that
$$
\int_{|h \xi|\le 1}\xi^4h^4{K}\left(\la h\ra\right) \dh
\les  \left( \frac{\kappa(|\xi|)}{\log(4+|\xi|)} \right)^2|\xi|^{2+s}
\log\left(\lambda_0+\la\xi\ra^2\right)^{2} \int_{|h|\le 1/|\xi|}\frac{\dh}{|h|^{s-1}}.
$$
Since $\log(4+|\xi|)\sim \log(\lambda_0+|\xi|^2)$ and since $0<s-1<1$, we deduce that
$$
\int_{|h \xi|\le 1}\xi^4h^4{K}\left(\la h\ra\right) \dh\lesssim  (\kappa(|\xi|))^2 |\xi|^{2s}.
$$
So, we get $I(\xi)\lesssim|\xi|^{2s}\phi(|\xi|)^2$. 

On the other hand, 
$$
I(\xi)\gtrsim \int_{\frac{2\pi}{5}\leq \xi h\leq \frac{3\pi}{5}}  {K}\left(\la h\ra\right) \dh
\gtrsim \left(\int_{\frac{2\pi}{5}\leq \xi h\leq \frac{3\pi}{5}} \dh\right)  \kappa^2(|\xi|)|\xi|^{1+2s}
\gtrsim   (\kappa(|\xi|))^2|\xi|^{2s}.
$$
Therefore, we proved \eqref{Z30}, which concludes the proof of statement $ii)$.

It remains to prove statement $i)$. The lower bound $\phi(\lambda)\ge c\kappa(\lambda)$ 
follows directly from the definition of $\phi$, by writing 
$$
\phi(\lambda)\ge \int_0^1  \frac{1-\cos(h)}{h^2}\kappa\left(\frac{\lambda}{h}\right) \dh
\ge \left(\int_0^1\frac{1-\cos(h)}{h^2}\dh\right)\kappa(\lambda),
$$
since $\kappa$ is increasing. To prove the upper bound, we split the integral into 
$\{h\le 1\}$ and $\{h>1\}$ and then use similar arguments to those used above. 
\end{proof}

\subsection{Paralinearization of the nonlinearity}\label{S:2.3}
For a nonlinear evolution equation, considerable insight 
comes from being able to decompose the nonlinearity 
into several pieces having different roles. 
For a parabolic free boundary problem in fluid dynamics, 
one expects to extract from the nonlinearity at least two terms: 
\begin{enumerate}[$i)$]
\item a convective term of the form $V\partial_x f$,
\item an elliptic component of the form $\gamma \D^\alpha f$,
\end{enumerate}
for some coefficients $V$ and $\gamma$ and some index $\alpha\ge 0$, 
where as above $\D=(-\Delta)^{1/2}$ (see Definition~\ref{defi:D} with $s=1$ and $\phi=1$, or~\e{n3} below).
To reach this goal, a standard strategy is use a paradifferential analysis, 
which consists in using a Littlewood-Paley decomposition to 
determine the relative significance of competing terms. 
For the Muskat equation, 
this idea was implemented independently in~\cite{Alazard-Lazar,Nguyen-Pausader}. 
In this paragraph, we recall the approach in~\cite{Alazard-Lazar} 
where the formulation of the Muskat equation 
in terms of finite differences is exploited to 
give such a paradifferential decomposition in a direct manner.

Recall that the Muskat equation reads
$$
\partial_tf=\frac{1}{\pi}\int_\xR\frac{\partial_x\Delta_\alpha f}{1+\left(\Delta_\alpha f\right)^2}\dalpha.
$$
Therefore it can be written under the form
$$
 \partial_tf= \frac{1}{\pi}\int_\xR\partial_x\Delta_\alpha f\dalpha 
 - \frac{1}{\pi}\int_\xR\partial_x\Delta_\alpha f\frac{\left(\Delta_\alpha f\right)^2}{1+\left(\Delta_\alpha f\right)^2}\dalpha.
$$
Let us introduce now some notations that will be used continually 
in the rest of the paper. 
We define the singular integral operators
\be\label{n3}
\mathcal{H}u=-\frac{1}{\pi}\mathrm{pv}\int_\xR\Delta_\alpha u\dalpha\quad\text{and}\quad \D=\mathcal{H}\partial_x.
\ee
Then the Muskat equation can be written under the form
\begin{align}\label{main}
\partial_tf+\D f = \mathcal{T}(f)f,
\end{align}
where $\mathcal{T}(f)$ is the operator defined by
\be\label{def:T(f)f}
\mathcal{T}(f)g = -\frac{1}{\pi}\int_\xR\left(\partial_x\Delta_\alpha g\right)
\frac{\left(\Delta_\alpha f\right)^2}{1+\left(\Delta_\alpha f\right)^2}\dalpha.
\ee
The desired decomposition of the nonlinearity alluded to above will 
be achieved by 
splitting the coefficient
$$
F_\alpha\defn\frac{\left(\Delta_\alpha f\right)^2}{1+\left(\Delta_\alpha f\right)^2}
$$
into its odd and even components. Set 
\begin{align}
&\mathcal{O}\left(\alpha,\cdot\right)
= \frac{1}{2}\frac{\left(\Delta_\alpha f\right)^2}{1+\left(\Delta_\alpha f\right)^2} -\frac{1}{2}\frac{\left(\Delta_{-\alpha} f\right)^2}{1+\left(\Delta_{-\alpha} f\right)^2},\label{Oss}\\&
\mathcal{E}\left(\alpha,\cdot\right) = \frac{1}{2}\frac{\left(\Delta_\alpha f\right)^2}{1+\left(\Delta_\alpha f\right)^2} +\frac{1}{2}\frac{\left(\Delta_{-\alpha} f\right)^2}{1+\left(\Delta_{-\alpha} f\right)^2}\cdot\label{Ess}
\end{align}
It follows that 
\begin{align*}
\mathcal{T}(f)g = -\frac{1}{\pi}\int_\xR\left(\partial_x\Delta_\alpha g\right)\mathcal{E}\left(\alpha,\cdot\right) \dalpha-\frac{1}{\pi}\int_\xR\left(\partial_x\Delta_\alpha g\right)\mathcal{O}\left(\alpha,\cdot\right) \dalpha.
\end{align*}
Since $\Delta_\alpha f(x)$ converges to $f_x(x)$ when $\alpha$ 
goes to $0$, we further 
decompose $\mathcal{E}\left(\alpha,\cdot\right)$ as
$$
\mathcal{E}\left(\alpha,\cdot\right) =\frac{(\partial_xf)^2}{1+(\partial_xf)^2}+\left(\mathcal{E}\left(\alpha,\cdot\right) -\frac{(\partial_xf)^2}{1+(\partial_xf)^2}\right).
$$
Remembering that 
\begin{equation*}
\D g(x)= -\frac{1}{\pi}\int_\xR\left(\partial_x\Delta_\alpha g\right)\dalpha,
\end{equation*}
we obtain the following decomposition of the nonlinearity:
\begin{equation}\label{n1}
\mathcal{T}(f)g=\frac{(\partial_xf)^2}{1+(\partial_xf)^2}\D g+V(f)\partial_x g+R(f,g).
\end{equation}
where 
\be\label{defi:V}
V=-\frac{1}{\pi}\int_\xR\frac{\mathcal{O}\left(\alpha,.\right)}{\alpha}\dalpha,
\ee
and 
\begin{align*}
R(f,g)=-\frac{1}{\pi}\int_\xR\left(\partial_x\Delta_\alpha g\right)\left(\mathcal{E}\left(\alpha,\cdot\right) -\frac{(\partial_xf)^2}{1+(\partial_xf)^2}\right)\dalpha+\frac{1}{\pi}\int_\xR\frac{\partial_x g(.-\alpha)}{\alpha}\mathcal{O}\left(\alpha,\cdot\right) \dalpha.
\end{align*}

\subsection{Nonlinear estimates}\label{S:2.4}
With these preliminaries established, we start the analysis of the 
nonlinearity in the Muskat equation. 

We begin by estimating the coefficient $V(f)$ (see~\e{defi:V}). 

\begin{proposition} \label{Z3'}
There exists a positive constant $C$ such that, for all $f$ in $\mathcal{S}(\xR)$, 
\begin{equation}\label{Z3}
\lA V(f)\rA_{\dot H^{1}}\leq C\lA f\rA_{\dot{H}^2}+C\lA f\rA_{\dot{H}^{\frac{7}{4}}}^2.
\end{equation}
\end{proposition}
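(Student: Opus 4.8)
The plan is to estimate $V(f)$ directly from its defining formula~\e{defi:V}, namely $V=-\frac{1}{\pi}\int_\xR \frac{\mathcal{O}(\alpha,\cdot)}{\alpha}\dalpha$, where $\mathcal{O}(\alpha,\cdot)$ is the odd part of $F_\alpha=(\Delta_\alpha f)^2/(1+(\Delta_\alpha f)^2)$ defined in~\e{Oss}. Since we want an $\dot H^1$ bound, I would control $\lA \partial_x V(f)\rA_{L^2}$, or equivalently work on the Fourier side; but the cleaner route is to use a finite-difference (Triebel--Lizorkin / Besov) characterization of $\dot H^1$ so that the odd/even cancellation built into $\mathcal{O}$ can be exploited pointwise in $\alpha$. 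The key structural observation is that $\mathcal{O}(\alpha,\cdot)$ vanishes to higher order than $F_\alpha$ itself as $\alpha\to 0$: writing $\psi(r)=r^2/(1+r^2)$, we have $\mathcal{O}(\alpha,\cdot)=\tfrac12\big(\psi(\Delta_\alpha f)-\psi(\Delta_{-\alpha}f)\big)$, and since $\Delta_\alpha f + \Delta_{-\alpha} f = \alpha^{-1}(f(x+\alpha)+f(x-\alpha)-2f(x))=\alpha \cdot \delta^2_\alpha f/\alpha^2$ is a second-order difference quotient, the difference $\psi(\Delta_\alpha f)-\psi(\Delta_{-\alpha}f)$ gains a factor measuring the second-order finite difference of $f$. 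This is what will ultimately produce the $\dot H^2$ term.

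Concretely, I would proceed in the following steps. First, use the mean value theorem: $\psi(\Delta_\alpha f)-\psi(\Delta_{-\alpha}f)=\psi'(c_\alpha)\,(\Delta_\alpha f-\Delta_{-\alpha}f)$ for some intermediate value $c_\alpha$, with $|\psi'|$ and $|\psi''|$ bounded (indeed $|\psi'(r)|\lesssim \min(|r|,|r|^{-1})\le 1$). Then decompose $V$ accordingly into a "main" piece where one replaces $\psi'(c_\alpha)$ by $\psi'(\partial_x f)$ or by a fixed benign factor, plus a remainder. The main piece, after integrating in $\alpha$, is of the schematic form $(\text{bounded factor})\cdot \big(\text{operator built from }\int \frac{\Delta_\alpha f-\Delta_{-\alpha}f}{\alpha}\dalpha\big)$; note $\Delta_\alpha f - \Delta_{-\alpha} f = \alpha^{-1}\delta^2_\alpha f$ up to harmless reindexing, so $\int \alpha^{-2}\delta^2_\alpha f\,\dalpha$ is (up to constants) a first-order operator $\sim \D f$ or $\partial_x$-type quantity — but we actually need its $\dot H^1$ norm, which is an $\dot H^2$ quantity, giving the $\lA f\rA_{\dot H^2}$ term. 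Second, for the remainder — where the difference $\psi'(c_\alpha)-\psi'(\text{smooth})$ appears — use $|\psi''|\lesssim 1$ to bound it by $|\Delta_\alpha f - \Delta_{-\alpha} f|$ or $|\Delta_\alpha f - \partial_x f|$, which are themselves finite differences of $f$; combined with the other factor $|\Delta_\alpha f - \Delta_{-\alpha} f|/|\alpha|$, this yields a quadratic expression in first differences of $f$, and by a Cauchy--Schwarz / Hölder splitting together with the Sobolev embedding~\e{FB} (used to convert $\dot F$-norms at high integrability into the $\dot H^{7/4}$ norm) one gets the $\lA f\rA_{\dot H^{7/4}}^2$ term. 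The exponent $7/4$ should emerge from the scaling bookkeeping: one is distributing two derivatives over a product of two finite-difference factors, $2 = \tfrac34 + \tfrac34 + \tfrac12$ in the Triebel--Lizorkin accounting, landing each factor at regularity $7/4$ measured in the appropriate $L^{p}$, which the embedding~\e{FB} (with $s-\tfrac12 = s_1 - \tfrac1{p_1}$) converts back to $\dot H^{7/4}$.

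The main obstacle I anticipate is the bookkeeping in the quadratic remainder term: one must carefully track which finite difference ($\delta_\alpha f$, $\delta^2_\alpha f$, $\Delta_\alpha f - \partial_x f$) carries how many derivatives, choose the Hölder exponents so that after applying~\e{FB} both factors land exactly at $\dot H^{7/4}$ (and not, say, $\dot H^{7/4}$ against $\dot H^{s}$ with $s\ne 7/4$, which would not close), and handle the principal-value / integrability issues at $\alpha=0$ and $\alpha=\infty$ in the $\alpha$-integral. The large-$\alpha$ behavior is easy since $|\psi|\le 1$ makes the integrand $O(|\alpha|^{-1})\cdot(\text{decaying})$, but one should be slightly careful that the $1/\alpha$ factor in~\e{defi:V} is exactly compensated by the extra vanishing of $\mathcal{O}$ at the origin. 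Apart from that, the linear main term is routine once the decomposition is set up. I would present the argument by first isolating the exact cancellation $\mathcal{O}(\alpha,\cdot) = \tfrac12\psi'(c_\alpha)(\Delta_\alpha f - \Delta_{-\alpha}f)$, then splitting $V = V_{\mathrm{main}} + V_{\mathrm{rem}}$, bounding $\lA V_{\mathrm{main}}\rA_{\dot H^1}\lesssim \lA f\rA_{\dot H^2}$ and $\lA V_{\mathrm{rem}}\rA_{\dot H^1}\lesssim \lA f\rA_{\dot H^{7/4}}^2$ separately.
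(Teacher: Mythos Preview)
Your overall strategy---factor $\mathcal{O}(\alpha,\cdot)$ as a bounded coefficient times $\Delta_\alpha f-\Delta_{-\alpha}f$, then split into a linear main piece and a quadratic remainder---is exactly what the paper does. But the order of operations in your plan creates a genuine gap. You propose to split $V=V_{\mathrm{main}}+V_{\mathrm{rem}}$ with $V_{\mathrm{main}}=\psi'(\partial_x f)\int\frac{\Delta_\alpha f-\Delta_{-\alpha}f}{\alpha}\,\dalpha=c\,\psi'(\partial_x f)\,\D f$, and then claim $\lA V_{\mathrm{main}}\rA_{\dot H^1}\lesssim\lA f\rA_{\dot H^2}$. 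This fails: when you differentiate $V_{\mathrm{main}}$ (or take a finite difference), the derivative hits $\psi'(\partial_x f)$ and produces a cross term $\psi''(\partial_x f)\,f_{xx}\,\D f$. Using only $|\psi''|\lesssim 1$, you are left with $\lA f_{xx}\,\D f\rA_{L^2}$, and this quantity is \emph{not} controlled by $\lA f\rA_{\dot H^2}+\lA f\rA_{\dot H^{7/4}}^2$. Indeed, any H\"older splitting $\lA f_{xx}\rA_{L^p}\lA\D f\rA_{L^q}$ with $p>2$ forces, via Sobolev, a norm $\lA f\rA_{\dot H^s}$ with $s>2$; and the other extreme $p=2$ requires $\lA\D f\rA_{L^\infty}$, which is exactly the Lipschitz control the whole paper is designed to avoid. (One can build explicit examples with $\widehat{\D f}(\xi)\sim|\xi|^{-1}\mathbf{1}_{[1,N]}$ where the ratio blows up like $\log N$.)

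The paper sidesteps this by reversing the order: it computes $\partial_x V$ first, writing $\partial_x\mathcal{O}=A_\alpha(\Delta_\alpha f_x-\Delta_{-\alpha}f_x)+(\partial_x A_\alpha)(\Delta_\alpha f-\Delta_{-\alpha}f)$ with the explicit smooth factor $A_\alpha=\tfrac12(\Delta_\alpha f+\Delta_{-\alpha}f)\big/[(1+(\Delta_\alpha f)^2)(1+(\Delta_{-\alpha}f)^2)]$, and \emph{then} splits by replacing $A_\alpha$ by its limit $f_x/(1+f_x^2)^2$. The point is that this bounded factor now sits outside the $\alpha$-integral \emph{after} differentiation, so it is never differentiated; the linear term $I_1$ is cleanly $\lesssim\lA f\rA_{\dot H^2}$. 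The price is a third term $I_3$ carrying $\partial_x A_\alpha\sim\Delta_\alpha f_x$, but this stays inside the $\alpha$-integral, so Cauchy--Schwarz in $\alpha$ followed by~\e{n120} lands both factors at $\dot H^{7/4}$. Your mean-value-theorem factor $\psi'(c_\alpha)$ is also problematic for this route, since $c_\alpha$ need not be smooth in $x$; the explicit $A_\alpha$ avoids that. The fix is simple: differentiate $\mathcal{O}$ first using a smooth factorization, then split.
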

\begin{proof}
Recall that
$$
V=-\frac{1}{\pi}\int_\xR\frac{\mathcal{O}\left(\alpha,.\right)}{\alpha}\dalpha 
\quad\text{where}\quad
\mathcal{O}\left(\alpha,\cdot\right) = \frac{1}{2}\frac{\left(\Delta_\alpha f\right)^2}{1+\left(\Delta_\alpha f\right)^2} -\frac{1}{2}\frac{\left(\Delta_{-\alpha} f\right)^2}{1+\left(\Delta_{-\alpha} f\right)^2}.
$$
Now, write 
\begin{equation}\label{Z100}
\mathcal{O}\left(\alpha,\cdot\right) =  	A_\alpha(x)\left(\Delta_\alpha f(x)-\Delta_{-\alpha} f(x)\right),
\end{equation}
where
\begin{equation*}
A_\alpha(x)= \frac{1}{2}
\frac{\Delta_\alpha f+\Delta_{-\alpha} f}{(1+\left(\Delta_\alpha f\right)^2)(1+\left(\Delta_{-\alpha} f\right)^2)}.
\end{equation*}
Hence
$$
\partial_x\mathcal{O}=
A_\alpha \big(\Delta_\alpha f_x-\Delta_{-\alpha} f_x\big)
+\partial_xA_\alpha\big(\Delta_\alpha f-\Delta_{-\alpha} f\big).
$$
Now, we replace in the first product the factor $A_\alpha$ by 
$$
\left(A_\alpha(x)-\frac{f_x(x)}{(1+f_x(x)^2)^2}\right)+\frac{f_x(x)}{(1+f_x(x)^2)^2},
$$
and observe that the last term is bounded by $1$. Therefore, 
by using the triangle inequality, it follows that
\begin{align*}
&\la\partial_x V(x)\ra\le I_1(x)+I_2(x)+I_3(x)\quad\text{where}\\[1ex]
&I_1(x)=\la\int_\xR\left(\Delta_\alpha f_x(x)-\Delta_{-\alpha} f_x(x)\right)\frac{\dalpha}{\alpha}\ra,\\
&I_2(x)= \int_\xR \left|A_\alpha(x)-\frac{f_x(x)}{(1+f_x(x)^2)^2}\right|\left|\Delta_\alpha f_x(x)-\Delta_{-\alpha} f_x(x)\right|\frac{\dalpha}{|\alpha|},\\
&I_3(x)= \int_\xR \la\partial_xA_\alpha(x)\ra \la\Delta_\alpha f(x)-\Delta_{-\alpha} f(x)\ra\frac{\dalpha}{|\alpha|}.
\end{align*}
We now must estimate the $L^2$-norm of $I_j$ for $1\le j\le 3$. 

We begin by estimating the $L^2$-norm of $I_1$. Observe that
$$
\Delta_\alpha f_x(x)-\Delta_{-\alpha} f_x(x)=
\frac{2f_x(x)-f_x(x-\alpha)-f_x(x+\alpha)}{\alpha}.
$$
Now, as above, we use the fact that, for any $\alpha\in\xR$, the 
Fourier transform of $x\mapsto 2g(x)-g(x+\alpha)-g(x-\alpha)$ 
is given by $(2-2\cos(\xi \alpha))\hat{g}(\xi)$. 
Consequently, it follows from 
Plancherel's theorem that
\begin{align*}
\lA I_1\rA_{L^2}^2&=\frac{1}{2\pi}\int_\xR |\xi|^2 \left|\int\left( 2 -e^{-i\alpha \xi}-e^{i\alpha \xi}\right)\frac{\dalpha}{\alpha^2}\right|^2|\hat{f}(\xi)|^2 \dxi \\&=  \frac{2}{\pi}\int_\xR |\xi|^2 \left|\int\left( 1 -\cos(\alpha \xi)\right)\frac{\dalpha}{\alpha^2}\right|^2|\hat{f}(\xi)|^2 \dxi
\\&=  \frac{2}{\pi}\left|\int\left( 1 -\cos(\alpha)\right)\frac{\dalpha}{\alpha^2}\right|^2\int_\xR |\xi|^4|\hat{f}(\xi)|^2 \dxi.
\end{align*}
Now, since $\la 1 -\cos(\alpha )\ra \leq \min\{2,|\alpha\xi|^2\}$,
\begin{align*}
 \left|\int_\xR\left( 1 -\cos(\alpha )\right)\frac{\dalpha}{\alpha^2}\right|\leq 2\int_{|\alpha|\geq 1}\frac{\dalpha}{\alpha^2}
 +\int_{|\alpha|\leq 1}|\alpha|^2\frac{\dalpha}{\alpha^2}\leq 3.
\end{align*}
So, 
\begin{align*}
\lA I_1\rA_{L^2}^2\les \int_\xR |\xi|^4|\hat{f}(\xi)|^2 dx=\lA f\rA_{\dot{H}^2(\mathbb{R})}^2.
\end{align*}
This implies the wanted inequality $\lA I_1\rA_{L^2}\les   \lA f\rA_{\dot{H}^2}$. 

We now move to the estimate of $\lA I_2\rA_{L^2}$.  Introduce
\be\label{defi:F}
F(x,y)=\mez \frac{x+y}{(1+x^2)(1+y^2)},
\ee
so that
\be\label{n20}
A_\alpha=F(\Delta_\alpha f,\Delta_{-\alpha}f)\quad\text{and}\quad \frac{f_x(x)}{(1+f_x(x)^2)^2}=F(f_x,f_x).
\ee
Since $\lA\nabla_{x,y}F\rA_{L^\infty(\xR^2)}\le 4$, we deduce that
\begin{align*}
\la A_\alpha(x)-\frac{f_x(x)}{(1+f_x(x)^2)^2}\ra\le 4 \la \Delta_{\alpha}f-f_x\ra+ 4\la \Delta_{-\alpha}f-f_x\ra,
\end{align*}
which in turn implies that, for all $x$ in $\xR$,
$$
\la I_2(x)\ra
\le 8 \int_\xR \left|\Delta_\alpha f(x)-f_x(x)\right|\left|\Delta_\alpha f_x(x)-\Delta_{-\alpha} f_x(x)\right|\frac{\dalpha}{|\alpha|},
$$
where we used the change of variables $\alpha\mapsto -\alpha$ to handle the contribution of the term $\Delta_{-\alpha}f-f_x$. Now, using the obvious estimate 
$$
\left|\Delta_\alpha f_x(x)-\Delta_{-\alpha} f_x(x)\right|\le \left|\Delta_\alpha f_x(x)\right|+\left|\Delta_{-\alpha} f_x(x)\right|,
$$
and the Cauchy-Schwarz inequality, we conclude that
\be\label{n16}
\lA I_2\rA_{L^2}^2
\les\left( \int_\xR\left(\int _\xR(\Delta_\alpha f-f_x(x))^2\frac{\dalpha}{\alpha^2}\right)^2 \dx\right)^{\mez}
\left( \int_\xR\left(\int_\xR \left(\Delta_\alpha f_x(x)\right)^2\dalpha\right)^2 \dx\right)^{\mez}.
\ee
We next claim that the first factor in the right-hand side above 
can be estimated by the second one. To see this, 
we start with the identity
$$
\Delta_\alpha f-f_x(x)=\frac{f(x)-f(x-\alpha)}{\alpha}-f_x(x)=\frac{1}{\alpha}\int_0^{\alpha}(f_x(x-y)-f_x(x))\dy.
$$
Then, for all $x$ and all $\alpha$ in $\xR$, we deduce that
$$
(\Delta_\alpha f-f_x(x))^2\le \frac{1}{\alpha}\la \int_0^{\alpha}(f_x(x-y)-f_x(x))^2\dy\ra.
$$
(Notice that the integral is not necessarily non-negative 
since $\alpha$ might be non-positive.) 
Then, integrating in $\alpha$ and splitting the integral 
into $\alpha\ge 0$ and $\alpha<0$, we obtain
\begin{align*}
\int_\xR \left(\Delta_\alpha f-f_x(x)\right)^2\frac{\dalpha}{\alpha^2}
&\leq	\int_{\xR}\la\int_0^\alpha \left( f_x(x-y)-f_x(x)\right)^2\dy\ra\frac{\dalpha}{|\alpha|^3}\\
&\leq2\int_{0}^\infty\int_0^\alpha \left( f_x(x-y)-f_x(x)\right)^2\dy\frac{\dalpha}{\alpha^3}.
\end{align*}
Since $2\int_y^{+\infty}\alpha^{-3}\dalpha\le y^{-2}$, using Fubini's theorem, 
it follows that
\be\label{Z1}
\begin{aligned}
\int_\xR \left(\Delta_\alpha f-f_x(x)\right)^2\frac{\dalpha}{\alpha^2}
&\leq  \int_0^\infty (f_x(x-y)-f_x(x))^2\frac{\dy}{y^2}\\
&\leq \int_0^\infty (\Delta_y f_x(x))^2 \dy\leq \int_\xR \left(\Delta_\alpha f_x(x)\right)^2\dalpha.
\end{aligned}
\ee
Hence, it follows from~\e{n16} that
$$
\lA I_2\rA_{L^2}^2\les 
\int_\xR\left(\int_\xR \left(\Delta_\alpha f_x(x)\right)^2\dalpha\right)^2 \dx.
$$
Now, using the Triebel-Lizorkin semi-norms~\e{n5}, observe that
\be\label{n120}
\begin{aligned}
\int_\xR \left( \int_\xR \la\Delta_\alpha f_x\ra^2\dalpha\right)^2\dx
&=\bigg(\int_\xR\bigg( \int_\xR \la\delta_\alpha f_x\ra^2\frac{\dalpha}{|\alpha|^{1+2\mez}}\bigg)^\frac{4}{2}\dx\bigg)^{\frac{4}{4}}\\
&=\lA f_x\rA^4_{\dot{F}^{\mez}_{4,2}}\les \lA f_x\rA_{\dot H^{\frac{3}{4}}}^4,
\end{aligned}
\ee
where we used the Sobolev embedding~\e{FB} in the last inequality. 
This proves that $\lA I_2\rA_{L^2}$ is estimated by $C\lA f\rA_{\dot H^{7/4}}^2$, 
which completes the analysis of $I_2$. 

It remains to estimate $\lA I_3\rA_{L^2}$. Remembering that the function $F$ in~\e{defi:F} 
has partial derivatives bounded on $\xR^2$, we find that
$$
\la \partial_xA_\alpha\ra\les \la \Delta_\alpha f_x\ra+\la \Delta_{-\alpha}f_x\ra.
$$
Now, making the change of variable $\alpha\mapsto-\alpha$ and applying 
the Cauchy-Schwarz inequality, we end up with
\begin{align*}
I_3(x)&\les \int_\xR\la \Delta_\alpha f_x(x)\ra \la\Delta_\alpha f(x)-\Delta_{-\alpha} f(x)\ra
\frac{\dalpha}{|\alpha|}\\
&\les \left(\int_\xR (\Delta_\alpha f_x(x))^2\dalpha\right)^{\mez}
\left(\int_\xR \left(\Delta_\alpha f(x)-\Delta_{-\alpha} f(x)\right)^2\frac{\dalpha}{\alpha^2}\right)^{\mez}.
\end{align*}
Now, using the inequality
$$
\left(\Delta_\alpha f(x)-\Delta_{-\alpha} f(x)\right)^2\le 2\left(\Delta_\alpha f(x)-f_x(x)\right)^2+2\left(\Delta_{-\alpha}f(x)-f_x(x)\right)^2,
$$
and then applying~\eqref{Z1},  we infer that
\begin{equation*}
\int_\xR \left(\Delta_\alpha f(x)-\Delta_{-\alpha} f(x)\right)^2\frac{\dalpha}{\alpha^2}\les
\int_\xR (\Delta_\alpha f_x(x))^2\dalpha.
\end{equation*}
Consequently, 
$$
\lA I_3\rA_{L^2}^2\les  \int_\xR\left( \int_\xR\left(\Delta_\alpha f_x(x)\right)^2\dalpha\right)^2\dx.
$$
Now, it follows from~\e{n120} that 
\begin{equation*}
\lA I_3\rA_{L^2}^2\les  \lA f\rA_{\dot H^{\frac{7}{4}}}^4,
\end{equation*}
which  concludes the proof of the proposition.
\end{proof}

\begin{remark} It follows from \eqref{Z100} that for all $f$ in $\mathcal{S}(\xR)$,
\begin{align*}
|V(f)(x)|&\le \frac{1}{2\pi}\int |\Delta_{\alpha}f(x)-\Delta_{-\alpha}f(x)|\frac{\dalpha}{|\alpha|}\\
&\le \frac{1}{\pi}  \iint_{\xR^2} |1-\cos(\alpha\xi)||\hat f(\xi)
\frac{\dxi \dalpha}{|\alpha|^2}.
\end{align*}
Thus, since $\int_{\mathbb{R}} |1-\cos(\alpha\xi)|\frac{ \dalpha}{|\alpha|^2}=|\xi|\int_{\mathbb{R}} |1-\cos(\alpha)|\frac{ \dalpha}{|\alpha|^2}\sim |\xi|$, we find that
\begin{equation}\label{Z101}
\lA V(f)\rA_{L^\infty}\lesssim \int|\xi| |\hat f(\xi)|\dxi. 
\end{equation}
We will use this estimate to bound the $H^1(\xR_x)$-norm of 
$\mathcal{T}(f)g$ in Corollary~\ref{bounTfg}.
\end{remark}

The next result contains a key estimate which will allow us to commute arbitrary operators with~$\mathcal{T}(f)$.

\begin{proposition}\label{Z18}
Assume that $\phi$ is as defined in~\e{n10} for some function $\kappa$ 
satisfying Assumption~$\ref{A:kappa}$. Then, there exists a positive constant $C$ such that, for all $f,g$ in $\mathcal{S}(\xR)$, 
\begin{align}\label{n18}
\blA\big[\D^{1,\phi},\mathcal{T}(f)\big](g)\brA_{L^2}&\lesssim \lA g\rA_{\dot H^{\frac74}}\blA \D^{\frac74,\phi}f\brA_{L^2}
+\lA g\rA_{\dot H^{\frac74}}\blA \D^{\frac{7}{4},\phi^2}f\brA_{L^2}^{\mez}\lA f\rA_{\dot{H}^{\frac{19}{12}}}^{\tdm}\\&\quad+\blA\D^{\frac{7}{4},\phi^{2}}g\brA_{L^2}^{\mez} \lA g\rA_{\dot H^{\frac74}}^{\mez} \lA f\rA_{\dot H^{\frac74}}.\nonumber
\end{align}
\end{proposition}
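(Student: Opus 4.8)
The plan is to work directly from the decomposition~\e{n1}, which expresses $\mathcal{T}(f)g$ as a sum of three pieces: $\frac{(\partial_xf)^2}{1+(\partial_xf)^2}\D g$, $V(f)\partial_x g$, and $R(f,g)$. Commuting $\D^{1,\phi}$ with $\mathcal{T}(f)$ thus reduces to commuting $\D^{1,\phi}$ with each of these three operators. However, this route forces one to deal with the coefficient $\frac{(\partial_xf)^2}{1+(\partial_xf)^2}$ and with $V(f)$ in $L^\infty$-type norms, which is precisely what we are trying to avoid. So instead I would go back to the integral representation~\e{def:T(f)f} of $\mathcal{T}(f)g$ and to Lemma~\ref{Z12}, which gives $\D^{1,\phi}$ itself as a finite-difference singular integral with kernel $\kappa(1/|h|)/h^2$. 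Writing the commutator $[\D^{1,\phi},\mathcal{T}(f)](g)$ as a double integral in the variables $\alpha$ (from $\mathcal{T}(f)$) and $h$ (from $\D^{1,\phi}$), the commutator structure produces differences of the coefficient $F_\alpha=\frac{(\Delta_\alpha f)^2}{1+(\Delta_\alpha f)^2}$ evaluated at shifted points, i.e. terms of the form $F_\alpha(x)-F_\alpha(x-h)$ (and second differences in $h$). This is the key gain: each such finite difference of $F_\alpha$ carries a factor which, after using $|\nabla F|\les 1$ and $F_\alpha = G(\Delta_\alpha f)$ with $G(u)=u^2/(1+u^2)$ smooth and bounded with bounded derivatives, is controlled by differences of $\Delta_\alpha f$, and each factor $\Delta_\alpha f$ can be further replaced by $\Delta_\alpha f - f_x$ plus $f_x$; the point is that no single undifferenced factor of $\Delta_\alpha f$ or $f_x$ survives in a way that would need an $L^\infty$ bound — they always appear inside a finite difference or are paired against an $L^2$ gain.

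Concretely, after expanding the commutator I expect to reach a sum of a few model integrals, each of the schematic form
$$
\int_\xR\int_\xR (\text{difference of }F_\alpha\text{ in }h)\,(\partial_x\Delta_\alpha g\text{-type factor in shifted variables})\,\frac{\kappa(1/|h|)}{h^2}\,\frac{\dalpha\, \dh}{?}.
$$
For each of these I would: (i) bound the $F_\alpha$-difference pointwise by a product of finite differences of $f$ and $f_x$ using the mean value theorem and Assumption~\ref{A:kappa} on $\kappa$ to move the logarithmic weight where needed; (ii) apply Cauchy--Schwarz in $\alpha$ and in $h$ to split into an ``$f$-factor'' and a ``$g$-factor''; (iii) recognize each resulting factor as a square-function / Triebel--Lizorkin seminorm in the sense of~\e{n5}, exactly as was done for $I_1,I_2,I_3$ in the proof of Proposition~\ref{Z3'} — this is where the estimate~\e{Z1} and the identity $\Delta_\alpha f - f_x = \frac1\alpha\int_0^\alpha(f_x(x-y)-f_x(x))\dy$ get reused; (iv) finally invoke the Sobolev embedding~\e{FB} (in the form $\lA f_x\rA_{\dot F^{1/2}_{4,2}}\les\lA f_x\rA_{\dot H^{3/4}}$, and similar embeddings with the $\phi$-weight handled via Proposition~\ref{Z9}) to land on the homogeneous Sobolev norms appearing in~\e{n18}. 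The appearance of $\phi^2$ in the middle and last terms of~\e{n18}, together with the fractional exponents $1/2$ and $3/2$, is the signature of a Cauchy--Schwarz split in which the logarithmic weight $\kappa$ is unevenly distributed between the two factors — half a power of $\phi^2$ on one side, a compensating extra Sobolev power $\dot H^{19/12}$ on the other — so the bookkeeping of exactly how the $\kappa$ weight splits is something I would have to do carefully rather than by a single symmetric application.

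The main obstacle, as usual in these commutator estimates, is step (i)--(ii): organizing the expansion of $[\D^{1,\phi},\mathcal{T}(f)]$ so that every term genuinely exhibits a finite difference of the coefficient $F_\alpha$ (so that the $W^{1,\infty}$-norm of $f$ never enters), while at the same time not losing too many derivatives. There is a real tension here: taking differences of $F_\alpha$ costs a derivative on $f$, taking the difference quotient in $h$ against the kernel $\kappa(1/|h|)/h^2$ behaves like applying $\D^{1,\phi}$, and the surviving factor $\partial_x\Delta_\alpha g$ already carries a full derivative on $g$; matching the total derivative count to land precisely at $\dot H^{7/4}$ on $g$ and at $\dot H^{7/4}$ (weighted) on $f$ — rather than something worse — requires exploiting the extra decay in $\alpha$ coming from $F_\alpha = G(\Delta_\alpha f)$ vanishing quadratically as $\alpha\to 0$, exactly as the splitting into $\Delta_\alpha f - f_x$ in~\e{Z1} does. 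I expect the even/odd splitting of the coefficient used in~\e{n1} to reappear here as well, in order to exhibit the cancellation that prevents a logarithmic divergence in the $\alpha$-integral near $\alpha = 0$. Once the right algebraic form of the commutator is in hand, the remaining estimates are of the same type as those already carried out for Proposition~\ref{Z3'}, so I would present those more briefly.
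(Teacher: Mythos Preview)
Your starting point is the right one --- go back to the integral form~\e{def:T(f)f} and to Lemma~\ref{Z12} --- but the plan as written is missing the organizing idea that makes the double integral tractable, and it also contains a couple of expectations that will lead you astray.

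The paper does not expand the commutator as a raw double integral in $(\alpha,h)$ and then try to Cauchy--Schwarz termwise. Instead it first introduces, for each fixed $\alpha$, the \emph{Leibniz remainder}
\[
\Gamma_\alpha\defn\D^{1,\phi}\big[F_\alpha\,\Delta_\alpha g_x\big]-F_\alpha\,\D^{1,\phi}\big[\Delta_\alpha g_x\big]-\Delta_\alpha g_x\,\D^{1,\phi}\big[F_\alpha\big],
\]
so that
\[
\big[\D^{1,\phi},\mathcal{T}(f)\big](g)=-\frac{1}{\pi}\int_\xR \Delta_\alpha g_x\,\D^{1,\phi}F_\alpha\,\dalpha\;-\;\frac{1}{\pi}\int_\xR \Gamma_\alpha\,\dalpha.
\]
This split is essential. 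In the first piece the $g$-factor $\Delta_\alpha g_x(x)$ is evaluated at the \emph{center} $x$ and carries no $h$-dependence, so the whole $h$-integral lands on $F_\alpha$ and produces $\D^{1,\phi}F_\alpha$; one then needs to control $\iint|\D^{5/4,\phi}F_\alpha|^2\dx\dalpha$, and this is where the second-difference estimate for $u\mapsto u^2/(1+u^2)$ (Lemma~\ref{Z10}, not just the mean value theorem) enters to produce both a linear second-difference of $\Delta_\alpha f$ and a quadratic first-difference term --- the quadratic term is exactly the source of the $\|\D^{7/4,\phi^2}f\|_{L^2}^{1/2}\|f\|_{\dot H^{19/12}}^{3/2}$ piece. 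In the second piece, the algebraic identity $(2xy-x_1y_1-x_{-1}y_{-1})-x(2y-y_1-y_{-1})-y(2x-x_1-x_{-1})=-(x-x_1)(y-y_1)-(x-x_{-1})(y-y_{-1})$ shows that $\Gamma_\alpha$ is a pure product of \emph{first} differences in $h$ of $F_\alpha$ and of $\Delta_\alpha g_x$, which you can then H\"older-split in $h$ with the $\kappa$-weight distributed unevenly. If instead you work with the direct expansion, the $g$-factor appears as $\Delta_\alpha g_x(x\pm h)$ at a shifted point, and a naive Cauchy--Schwarz in $h$ does not yield a recognizable norm of $g$ --- the Leibniz reorganization is what fixes this.

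Two of your anticipated mechanisms are not used and one is actually false. First, $F_\alpha$ does \emph{not} vanish as $\alpha\to 0$: $\Delta_\alpha f\to f_x$, so $F_\alpha\to f_x^2/(1+f_x^2)$. There is no ``extra decay near $\alpha=0$'' to exploit. The integrability in $\alpha$ comes instead from the simple identity~\e{n15}, $\iint|\Delta_\alpha u|^2\dalpha\dx\sim\|u\|_{\dot H^{1/2}}^2$, applied to $u=\D^{5/4,\phi}f$, $u=\D^{3/4}f$, etc.; this is what raises the Sobolev index from $5/4$ to $7/4$. Second, neither the even/odd splitting~\e{Oss}--\e{Ess} nor the identity~\e{Z1} is used here; those are tools for Propositions~\ref{Z3'} and~\ref{Z19}, not for this commutator. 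Once you have the Leibniz split, the remaining estimates are H\"older/Minkowski followed by Proposition~\ref{Z9} and the embeddings~\e{FB}, very much in the style you describe in steps (iii)--(iv).
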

\begin{proof}
Recall that the operator $\mathcal{T}(f)$ is defined by
$$
\mathcal{T}(f)g
 = -\frac{1}{\pi}\int_\xR\left(\Delta_\alpha g_x\right)F_\alpha\dalpha\quad\text{where}\quad
 F_\alpha=\frac{\left(\Delta_\alpha f\right)^2}{1+\left(\Delta_\alpha f\right)^2}\dalpha.
$$
Let us introduce 
$$
\Gamma_\alpha\defn\D^{1,\phi}\left[F_\alpha\Delta_\alpha g_x \right]-F_\alpha\D^{1,\phi}
\left[\Delta_\alpha g_x\right]-\Delta_\alpha g_x\D^{1,\phi}\left[F_\alpha\right].
$$
Loosely speaking, the term $\Gamma_\alpha$ 
is a remainder term for a fractional Leibniz rule with the operator $\D^{1,\phi}$. 
With this notation, one can write the commutator of the operators $\D^{1,\phi}$ and $\mathcal{T}(f)$ as
$$
\left[\D^{1,\phi},\mathcal{T}(f)\right](g)= -\frac{1}{\pi} \int_\xR \Delta_\alpha g_x\D^{1,\phi}F_\alpha\dalpha
-\frac{1}{\pi} \int_\xR  \Gamma_\alpha\dalpha.
$$
Consequently, to estimate the $L^2$-norm of $\left[\D^{1,\phi},\mathcal{T}(f)\right](f)$, 
we have to bound  the  following integrals:
$$
(I)\defn \int_\xR\left(	\int_\xR \Delta_\alpha g_x(x)\D^{1,\phi}F_\alpha(x)\dalpha\right)^2\dx,\quad 
(II)\defn
\int_\xR\left(	\int_\xR \Gamma_\alpha\dalpha\right)^2\dx.
$$
More precisely, to prove the wanted estimate~\e{n18}, it is sufficient to show that 
\begin{align}
(I)&\lesssim 
\lA g\rA^2_{\dot H^{\frac74}}\blA \D^{\frac74,\phi}f\brA_{L^2}^2
+\lA g\rA^2_{\dot H^{\frac74}}\blA \D^{\frac{7}{4},\phi^2}f\brA_{L^2}\blA \D^{\frac{19}{12}}f\brA_{L^2}^3,\label{Z11}\\
(II)&\lesssim  \blA\D^{\frac{7}{4},\phi^{2}}g\brA_{L^2}\blA\D^{\frac{7}{4}}g\brA_{L^2}
\blA\D^{\frac{7}{4}}f\brA_{L^2}^2.\label{Z13}
\end{align}

\textit{Step 1:} We prove \eqref{Z11}.  By Holder's inequality and Minkowski's inequality, one has 
$$
(I)	\leq \left(\int_\xR\left(	\int_\xR \big\vert\Delta_\alpha g_x(x)\big\vert^2 \dalpha\right)^2\dx\right)^{\mez}
\left(\int_\xR\left(\int_\xR \big\vert\D^{1,\phi}F_\alpha(x)\big\vert^2 \dalpha\right)^2dx\right)^{\mez}.
$$
The first factor is estimated by means of~\e{n120}, namely
$$
\left(\int_\xR\left(\int_\xR \big\vert\Delta_\alpha g_x(x)\big\vert^2 \dalpha\right)^2\dx\right)^{\mez}\les  \lA g\rA^2_{\dot H^{\frac{7}{4}}}.
$$
The analysis of the second term is more difficult. 
We begin by applying Minkowski's inequality together with the Sobolev 
embedding $\dot{H}^{1/4}(\xR)\hookrightarrow L^4(\xR)$, to obtain
\begin{align*}
\left(\int_\xR\left(\int _\xR\big\vert\D^{1,\phi}F_\alpha(x)\big\vert^2 \dalpha\right)^2\dx\right)^{\mez}
&\les \int_\xR\left(	\int_\xR \big\vert\D^{1,\phi}F_\alpha(x)\big\vert^4\dx\right)^{\mez}\dalpha\\
&\les\iint_{\xR^2} \big\vert\D^{\frac54,\phi}F_\alpha(x)\big\vert^2\dx\dalpha.
\end{align*}
Now, to evaluate the latter integral, we use Lemma \ref{Z9}, which implies that
\begin{multline*}
\iint_{\xR^2} \big\vert\D^{\frac54,\phi}F_\alpha(x)\big\vert^2\dx\dalpha\\
\sim  \iint_{\xR^2}  \left| 2F_\alpha( x)-F_\alpha( x+h)-F_\alpha( x-h)\right|^2 \left(\kappa\left(\frac{1}{|h|}\right)\right)^2\frac{\dx\dh}{|h|^{1+5/2}}\cdot
\end{multline*}
We must estimate the integrand $\left| 2F_\alpha( x)-F_\alpha( x+h)-F_\alpha( x-h)\right|^2$ 
in terms of similar terms for $\Delta_\alpha f$. To do so, write $F_\alpha=
\mathcal{F}(\Delta_\alpha)$ with $\mathcal{F}(x)=x^2/(1+x^2)$. Then 
we use the following sharp contraction estimate for $\mathcal{F}$. 

\begin{lemma}\label{Z10}
For any triple of real numbers $(x_1,x_2,x_3)$, there holds
	\begin{equation}\label{Z4}
	\left|\frac{2x_1^2}{1+x_1^2}-\frac{x_2^2}{1+x_2^2}-\frac{x_3^2}{1+x_3^2}\right|\leq |x_2+x_3-2x_1| +|x_2-x_1|^2+|x_3-x_1|^2.
	\end{equation}	
\end{lemma}
\begin{remark}
It is worth remarking that such a clean inequality does not hold for a general function.
\end{remark}
\begin{proof}
Write
\begin{align*}
&\frac{2x_1^2}{1+x_1^2}-\frac{x_2^2}{1+x_2^2}-\frac{x_3^2}{1+x_3^2}
=\frac{2}{1+x_1^2}-\frac{1}{1+x_2^2}-\frac{1}{1+x_3^2}\\[1ex]
&=\frac{(x_3^2+1)(x_2+x_1)(x_2-x_1)+(x_2^2+1)(x_3+x_1)(x_3-x_1)}{(1+x_1^2)(1+x_2^2)(1+x_3^2)}\\[1ex]
&=\frac{(x_3^2+1)(x_2+x_1)(x_2+x_3-2x_1)+(x_1x_2+x_2x_3+x_3x_1-1)(x_2-x_3)(x_3-x_1)}{(1+x_1^2)(1+x_2^2)(1+x_3^2)},
\end{align*}
to obtain
\begin{align*}
\left|\frac{2x_1^2}{1+x_1^2}-\frac{x_2^2}{1+x_2^2}-\frac{x_3^2}{1+x_3^2}\right|
\leq \vert x_2+x_3-2x_1\vert +\vert x_2-x_3\vert \vert x_3-x_1\vert,
\end{align*} 
which implies the desired result.
\end{proof}

Directly from the definition of $F_\alpha$, it follows from the previous lemma that
\begin{align*}
\left| 2F_\alpha( x)-F_\alpha( x+h)-F_\alpha( x-h)\right|^2
&\les\left| 2\Delta_\alpha f( x)-\Delta_\alpha f( x+h)-\Delta_\alpha f( x-h)\right|^2 \\
&\quad+\left| \Delta_\alpha f( x)-\Delta_\alpha f( x+h)\right|^4\\
&\quad+\left| \Delta_\alpha f( x)-\Delta_\alpha f( x-h)\right|^4.
\end{align*}
Integrating in $h$ and making use of the change of variable $h\mapsto -h$ 
to handle the contribution of the last term, 
we conclude that
\begin{align*}
&\int_{\xR}  \left| \D^{5/4,\phi}\left[F_\alpha\right](x)\right|^2\dx\\
&\qquad\qquad\lesssim 
\iint_{\xR^2}  \left| 2\Delta_\alpha f( x)-\Delta_\alpha f( x+h)-\Delta_\alpha f( x-h)\right|^2
\left(\kappa\left(\frac{1}{|h|}\right)\right)^2\frac{\dx\dh}{|h|^{1+5/2}}
\\&\qquad\qquad\quad+
\iint_{\xR^2}  \left| \Delta_\alpha f( x)-\Delta_\alpha f( x+h)\right|^4
\left(\kappa\left(\frac{1}{|h|}\right)\right)^2\frac{ \dx\dh}{|h|^{1+5/2}}.
 \end{align*}
The first term is estimated by using again Lemma \ref{Z9}, which implies that
\begin{multline*}
 \iint_{\xR^2}  \left| 2\Delta_\alpha f( x)-\Delta_\alpha f( x+h)-\Delta_\alpha f( x-h)\right|^2
\left(\kappa\left(\frac{1}{|h|}\right)\right)^2\frac{\dx\dh}{|h|^{1+5/2}}\\
  \les  \int_{\xR}  \left| \D^{5/4,\phi}\left[\Delta_\alpha f\right](x)\right|^2\dx.
 \end{multline*}
On the other hand, using the same arguments together with Holder's inequality, we have
\begin{align*}
 &\iint_{\xR^2}  \left| \Delta_\alpha f( x)-\Delta_\alpha f( x+h)\right|^4 \left(\kappa\left(\frac{1}{|h|}\right)\right)^2\frac{\dx\dh}{|h|^{1+5/2}}\\
 &\qquad\qquad\leq  \left(\iint_{\xR^2}  \left| \Delta_\alpha f( x)-\Delta_\alpha f( x+h)\right|^2\left(\kappa\left(\frac{1}{|h|}\right)\right)^4\frac{\dx\dh}{|h|^{1+5/2}}\right)^{1/2} \\
 &\qquad\qquad\quad \times\left(\iint_{\xR^2}  \left| \Delta_\alpha f( x)-\Delta_\alpha f( x+h)\right|^6\frac{\dx\dh}{|h|^{1+5/2}}\right)^{1/2}\\
 &\qquad\qquad\sim \blA\D^{\frac{5}{4},\phi^2}\Delta_\alpha f\brA_{L^2} \lA\Delta_\alpha f\rA_{\dot{F}^{\frac{5}{12}}_{6,6}}^3
\\&\qquad\qquad \lesssim \blA\D^{\frac{5}{4},\phi^2}\Delta_\alpha f\brA_{L^2} \lA\Delta_\alpha f\rA_{\dot{H}^{\frac{3}{4}}}^3,
\end{align*}
where we used the Sobolev embedding $\dot{H}^{\frac{3}{4}}(\xR)\hookrightarrow \dot{F}^{\frac{5}{12}}_{6,6}(\xR)$, see~\e{FB}. Therefore, by gathering the previous results, we get
$$
(I)\lesssim   \lA g\rA^2_{\dot H^{\frac74}}\left(\iint_{\xR^2}  \bla\Delta_\alpha( \D^{\frac54,\phi}f)(x)\bra^2 \dx \dalpha
+\int_{\xR}  \blA\D^{\frac{5}{4},\phi^2}\Delta_\alpha f\brA_{L^2} \lA \Delta_\alpha f\rA_{\dot{H}^{\frac{3}{4}}}^3\dalpha\right).
$$
Now we claim that, for any function $\tilde{f}$, 
\be\label{n15}
\iint_{\xR^2}  \bla \Delta_\alpha \tilde{f}\bra^2\dalpha\dx\sim 
\blA \tilde{f}\brA_{\dot{H}^{\mez}}^2.
\ee
To see this, write
$$
\iint_{\xR^2}  \bla \Delta_\alpha \tilde{f}\bra^2\dalpha\dx=
\iint_{\xR^2}  \bla \delta_\alpha \tilde{f}\bra^2\frac{\dalpha}{\alpha^{1+2\mez}}\dx
=\blA \tilde{f}\brA_{\dot{F}_{2,2}^{\mez}}^2
\sim \blA \tilde{f}\brA_{\dot{H}^{\mez}}^2,
$$
where we used~\e{SE0}. Now the estimate~\e{n15} implies that
\begin{align*}
& \int\bla\Delta_\alpha( \D^{\frac54,\phi}f)(x)\bra^2 \dx \dalpha\les \blA \D^{\frac74,\phi}f\brA_{L^2}^2 ,\\
&\int_{\xR}  \brA\Delta_\alpha (\D^{\frac{5}{4},\phi^2}f)\brA_{L^2}^2\dalpha
\les \blA \D^{\frac{7}{4},\phi^2}f\brA_{L^2}^2.
\end{align*}
It follows that
$$
(I)\lesssim 
\lA g\rA^2_{\dot H^{\frac74}}\blA \D^{\frac74,\phi}f\brA_{L^2}^2 +\lA g\rA^2_{\dot H^{\frac74}}\blA \D^{\frac{7}{4},\phi^2}f\brA_{L^2}\left(\int_{\xR}\brA\Delta_\alpha \big( \D^{\frac{3}{4}}f\big)\brA_{L^2}^6 \dalpha\right)^{\mez}.
$$
Using the Besov norm~\e{defi:B-spaces}, we have
$$
\left(\int_{\xR}\brA\Delta_\alpha \big( \D^{\frac{3}{4}}f\big)\brA_{L^2}^6 \dalpha\right)^{\mez}
=\blA \D^{\frac34}f\brA_{\dot{B}^{\frac56}_{2,6}}^3\les \blA \D^{\frac34}f\brA_{\dot{F}^{\frac56}_{2,6}}^3
\les \blA \D^{\frac{19}{12}}f\brA_{L^2}^3,
$$
where we have used the embedding~\e{FB} in the last inequality, while the inner inequality follows 
at once from the definitions of Besov and Triebel-Lizorkin space (see~\e{defi:B-spaces} and \e{n5}) by using the Minkowski's inequality. This proves the wanted result~\eqref{Z11}.

\textit{Step 2:} We prove \e{Z13}. 
Recall that Lemma~\ref{Z12} implies that for any function $g$, 
one can compute $\D^{1,\phi}g$ as follows:
\begin{equation*}
\D^{1,\phi}g(x)=\frac{1}{4}\int\frac{2g(x)-g(x+h)-g(x-h)}{h^2}\kappa\left(\frac{1}{|h|}\right)\dh.
\end{equation*}
Then using the elementary identity
\begin{align*}
&\left(2xy-x_{1}y_1-x_{-1}y_{-1}\right)-x	\left(2y-y_1-y_{-1}\right)-y	\left(2x-x_{1}-x_{-1}\right)\\&=-(x-x_1)(y-y_1)-(x-x_{-1})(y-y_{-1}),
\end{align*}
we deduce that
\begin{align*}
\la \Gamma_\alpha\ra=2\left|\int_\xR \left(F_\alpha(x)-F_\alpha(x-h)\right)
\left(\Delta_\alpha g_x(x)-\Delta_\alpha g_x(x-h)\right) \kappa\left(\frac{1}{|h|}\right)\frac{\dh}{h^2}\right|.
\end{align*}
Since
\begin{equation*}
|F_\alpha(x)-F_\alpha(x-h)|\leq |\Delta_\alpha f(x)-\Delta_\alpha f(x-h)|,
\end{equation*}
it follows that
\begin{align*}
\la\Gamma_\alpha\ra&\leq 2\int_{\xR}  \left|\Delta_\alpha f(x)-\Delta_\alpha f(x-h)\right|\left|\Delta_\alpha g_x(x)-\Delta_\alpha g_x(x-h)\right|
\kappa\left(\frac{1}{|h|}\right)\frac{\dh}{h^2}
\\
&\leq 2 
\left(\int_{\xR}  \left|\Delta_\alpha g_x(x)-\Delta_\alpha g_x(x-h)\right|^2 \kappa^4\left(\frac{1}{|h|}\right)\frac{\dh}{h^2}\right)^{\frac14}\\
&\times \left(\int_{\xR}  \la\Delta_\alpha g_x(x)-\Delta_\alpha g_x(x-h)\ra^3 \frac{\dh}{h^2}\right)^{\frac 1 6} 
\left(\int_{\xR}\left|\Delta_\alpha f(x)-\Delta_\alpha f(x-h)\right|^{\frac{12}{7}} \frac{\dh}{h^2}\right)^{\frac{7}{12}}.
\end{align*}
So, by Holder's inequality in $x$,
\begin{align*}
\lA \Gamma_\alpha\rA_{L^2(\xR;\dx)}^2&\leq 4 \left(\iint_{\xR^2}\left(\Delta_\alpha g_x(x)-\Delta_\alpha g_x(x-h)\right)^2 \kappa^4\left(\frac{1}{|h|}\right)
\frac{\dh}{h^2}\dx\right)^{\mez}\\
&\quad\times \left(\iint_{\xR^2}\left|\Delta_\alpha g_x(x)-\Delta_\alpha g_x(x-h)\right|^3 \frac{\dh\dx}{h^2}\right)^{\frac13}\\
&\quad\times \left(\int_{\xR}\left(\int_{\xR}\left|\Delta_\alpha f(x)-\Delta_\alpha f(x-h)\right|^{\frac{12}{7}} \frac{\dh}{h^2}\right)^{7}\dx\right)^{\frac16}\\
&\sim \blA |\Delta_\alpha (\D^{\frac{3}{2},\phi^{2}}g)\brA_{L^2}
\lA\Delta_\alpha g_x\rA_{\dot{F}^{\frac13}_{3,3}}
\lA\Delta_\alpha f\rA_{\dot{F}^{\frac{7}{12}}_{12,\frac{12}{7}}}^2\\&\overset{\eqref{FB}}\lesssim  
\blA \Delta_\alpha (\D^{\frac{3}{2},\phi^{2}}g)\brA_{L^2}
\blA\Delta_\alpha (\D^{\tdm} g)\brA_{L^2}
\blA\Delta_\alpha (\D f)\brA_{L^2}^2.
\end{align*}
Therefore,
\begin{align*}
(II)&\leq  \left(\int_{\xR} \blA |\Delta_\alpha (\D^{\frac{3}{2},\phi^{2}}g)\brA_{L^2}^{1/2}
\blA\Delta_\alpha (\D^{3/2} g)\brA_{L^2}^{1/2} \blA\Delta_\alpha (\D f)\brA_{L^2}\dalpha\right)^2
\\
&\leq \left(\int_{\xR}\blA\Delta_\alpha (\D^{\frac{3}{2},\phi^{2}}g)\brA_{L^2}^2
|\alpha|^{1/2}\dalpha\right)^{1/2}
\left(\int_{\xR}\blA\Delta_\alpha (\D^{\frac{3}{2}}g)\brA_{L^2}^2|\alpha|^{1/2}\dalpha\right)^{1/2}\\
&\quad\times \int_{\xR}\blA\Delta_\alpha (\D f)\brA_{L^2}^2|\alpha|^{-1/2}\dalpha\\
&\sim  \blA\D^{\frac{7}{4},\phi^{2}}g\brA_{L^2}\blA\D^{\frac{7}{4}}g\brA_{L^2}\blA\D^{\frac{7}{4}}f\brA_{L^2}^2,
\end{align*}
where we used~\e{SE0}. This gives \eqref{Z13}, which completes the proof.
\end{proof}

Eventually, we study the remainder term $R(f,g)$ in the 
paralinearization of $\mathcal{T}(f)g$ (see~\e{n1}).
\begin{proposition}\label{Z19}
Assume that $\phi$ is as defined in~\e{n10} for some function $\kappa$ satisfying Assumption~$\ref{A:kappa}$. 
Then, there exists a positive constant $C$ such that, for all 
functions~$f,g$ in $\mathcal{S}(\xR)$,
\begin{equation}\label{Z14}
\Vert R(f,g)\Vert _{L^2}\le C\Vert g\Vert _{\dot{H}^{\frac{3}{4}}}  \lA f\rA_{\dot{H}^{\frac{7}{4}}}.
\end{equation}
In particular, 
\begin{equation}\label{Z15}
\Vert R(f,\D^{1,\phi}f)\Vert _{L^2}\le  C\blA \D^{\frac{7}{4},\phi}f\brA_{L^2} \lA f\rA_{\dot{H}^{\frac{7}{4}}}.
\end{equation}
\end{proposition}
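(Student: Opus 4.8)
The plan is to first rewrite $R(f,g)$ as a single integral, then to recognize the resulting operator as a fractional integral of order $\frac14$ acting on $\partial_xg$, and finally to use that such operators map $\dot H^{-1/4}(\xR)$ into $L^2(\xR)$. For the reformulation, note that in the first term of the definition of $R(f,g)$ one has $\partial_x\Delta_\alpha g=\Delta_\alpha\partial_xg=(\partial_xg(x)-\partial_xg(x-\alpha))/\alpha$; since the map $\alpha\mapsto\mathcal{E}(\alpha,x)-\mathcal{F}(\partial_xf(x))$ is even, where $\mathcal{F}(u)=u^2/(1+u^2)$, the contribution of the summand $\partial_xg(x)/\alpha$ is odd in $\alpha$ and its principal value over $\xR$ vanishes. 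Using in addition $\mathcal{E}+\mathcal{O}=F_\alpha$, the two terms defining $R(f,g)$ coalesce into
$$
R(f,g)(x)=\frac1\pi\int_\xR\frac{\partial_xg(x-\alpha)}{\alpha}\,\psi_\alpha(x)\,\dalpha,\qquad
\psi_\alpha(x)\defn \mathcal{F}\big(\Delta_\alpha f(x)\big)-\mathcal{F}\big(\partial_xf(x)\big),
$$
an absolutely convergent integral for $f,g\in\mathcal{S}(\xR)$; this is just the identity $R(f,g)=\mathcal{T}(f)g-\mathcal{F}(f_x)\D g-V(f)\partial_xg$ rewritten, with the transport term absorbed thanks to $\pv\int_\xR\alpha^{-1}\mathcal{F}(\Delta_\alpha f(x))\,\dalpha=-\pi V(f)(x)$.

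The second step is to read off the size of the kernel. Since $\vert\mathcal{F}'\vert\le1$ one has $\vert\psi_\alpha(x)\vert\le\vert\Delta_\alpha f(x)-f_x(x)\vert$, and writing $\Delta_\alpha f(x)-f_x(x)=\int_0^1\big(f_x(x-t\alpha)-f_x(x)\big)\,\dt$ and using that $f_x$ is uniformly $\frac14$-H\"older with $[f_x]_{C^{1/4}}\les\lA f\rA_{\dot H^{7/4}}$ (Sobolev embedding) gives the \emph{uniform} bound $\vert\psi_\alpha(x)\vert\les\vert\alpha\vert^{1/4}\lA f\rA_{\dot H^{7/4}}$ for all $\alpha\in\xR$, $x\in\xR$; differentiating $\Delta_\alpha f$ in $\alpha$ and using the same H\"older bound also gives $\vert\partial_\alpha\psi_\alpha(x)\vert\les\vert\alpha\vert^{-3/4}\lA f\rA_{\dot H^{7/4}}$. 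Consequently, after the substitution $\beta=x-\alpha$, one may write $R(f,g)=T(\partial_xg)$ where $T$ has kernel $K(x,\beta)=\pi^{-1}(x-\beta)^{-1}\psi_{x-\beta}(x)$, which satisfies $\vert K(x,\beta)\vert\les\lA f\rA_{\dot H^{7/4}}\vert x-\beta\vert^{-3/4}$ and $\vert\partial_\beta K(x,\beta)\vert\les\lA f\rA_{\dot H^{7/4}}\vert x-\beta\vert^{-7/4}$ --- precisely the size and regularity of a (variable-coefficient) fractional integral operator of order $\frac14$ on $\xR$.

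It then remains a standard fact that such an operator is bounded from $\dot H^{-1/4}(\xR)$ into $L^2(\xR)$ with norm $\les\lA f\rA_{\dot H^{7/4}}$, which gives \eqref{Z14} because $\lA\partial_xg\rA_{\dot H^{-1/4}}\sim\lA g\rA_{\dot H^{3/4}}$. Concretely, by duality $\lA R(f,g)\rA_{L^2}=\sup_{\lA\eta\rA_{L^2}\le1}\langle R(f,g),\eta\rangle$, and exchanging the two integrations gives $\langle R(f,g),\eta\rangle=\langle\partial_xg,T^{*}\eta\rangle$ with $(T^{*}\eta)(\beta)=\int_\xR K(x,\beta)\eta(x)\,\dx$, whence $\vert\langle R(f,g),\eta\rangle\vert\le\lA\partial_xg\rA_{\dot H^{-1/4}}\lA T^{*}\eta\rA_{\dot H^{1/4}}$; the kernel bounds just established show $T^{*}\colon L^2\to\dot H^{1/4}$ with the claimed norm (applying $\D^{1/4}$ in $\beta$ turns the $\vert x-\beta\vert^{-3/4}$ kernel into a bounded operator of Calder\'on--Zygmund type, the size bound alone already giving $T^{*}\colon L^2\to L^4$ by Hardy--Littlewood--Sobolev). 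Finally, \eqref{Z15} is the special case $g=\D^{1,\phi}f$, for which $\lA\D^{1,\phi}f\rA_{\dot H^{3/4}}=\blA\D^{7/4,\phi}f\brA_{L^2}$ since $\D^{3/4}\D^{1,\phi}=\D^{7/4,\phi}$.

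The main obstacle is this last step. Estimate \eqref{Z14} is \emph{critical}: the Sobolev exponents on $g$ and $f$ add up to $3/4+7/4=5/2$, the scaling exponent of the Muskat equation, and $g$ is only assumed to lie in $\dot H^{3/4}(\xR)$, hence need not be Lipschitz and $\partial_xg$ need not belong to any $L^p(\xR)$. One therefore cannot estimate $R(f,g)$ by putting absolute values inside the $\alpha$-integral: the odd kernel $\alpha^{-1}$ would then be seen only through its modulus $\vert\alpha\vert^{-1}$, the resulting bound would cost a full derivative on $g$ (leading to the useless $\lA g\rA_{\dot H^1}\lA f\rA_{\dot H^{3/2}}$), and moreover the integral would diverge for large $\alpha$. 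The missing $\frac14$ derivative must instead be recovered from the oscillation of $\alpha^{-1}$ together with the H\"older-$\frac14$ smallness of $\psi_\alpha$, which is exactly why one first collapses $R(f,g)$ to a single fractional-integral-type operator and then argues by duality (or in frequency space) rather than by the triangle inequality.
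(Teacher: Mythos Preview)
Your reformulation of $R(f,g)$ as $\frac{1}{\pi}\int\frac{\partial_xg(x-\alpha)}{\alpha}\psi_\alpha(x)\,\dalpha$ is correct, and the kernel bounds $\la K(x,\beta)\ra\les\lA f\rA_{\dot H^{7/4}}\la x-\beta\ra^{-3/4}$ and $\la\partial_\beta K(x,\beta)\ra\les\lA f\rA_{\dot H^{7/4}}\la x-\beta\ra^{-7/4}$, obtained from the embedding $\dot H^{7/4}(\xR)\hookrightarrow\dot C^{1,1/4}(\xR)$, are valid. The gap is in the last step: the assertion that these kernel bounds alone make $T\colon\dot H^{-1/4}\to L^2$ bounded is \emph{not} a standard fact. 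Your sketch (``applying $\D^{1/4}$ in $\beta$ turns the kernel into a Calder\'on--Zygmund operator'') yields a kernel of size $\la x-\beta\ra^{-1}$, but Calder\'on--Zygmund theory for $L^2$-boundedness requires regularity in \emph{both} variables (or a $T1$ condition). You have only established the $\partial_\beta$-regularity; the $\partial_x$-regularity fails, since $\partial_xK$ unavoidably contains a term $\frac{1}{x-\beta}\mathcal{F}'(f_x)f_{xx}$, and $f_{xx}$ is not controlled by $\lA f\rA_{\dot H^{7/4}}$. The Hardy--Littlewood--Sobolev argument you mention gives only $T^*\colon L^2\to L^4$, which is strictly weaker than $T^*\colon L^2\to\dot H^{1/4}$. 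So the quarter-derivative you need to recover is not actually recovered by this route without substantial additional work.

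The paper's proof is genuinely different and sidesteps this obstruction. Rather than viewing $R$ as a fractional integral acting on $\partial_xg$, it integrates by parts \emph{in $\alpha$} (using $\partial_x\Delta_\alpha g=-\alpha^{-1}\partial_\alpha\delta_\alpha g$ and $\partial_xg(\cdot-\alpha)=\partial_\alpha\delta_\alpha g$) to trade the full derivative on $g$ for a finite difference $\delta_\alpha g$. After invoking a pointwise bound from~\cite{Alazard-Lazar} on the resulting coefficients, one obtains $\la R(f,g)(x)\ra\les\int\la\Delta_\alpha g(x)\ra\,\gamma(\alpha,x)\,\dalpha$ with $\int\gamma(\alpha,x)^2\,\dalpha\les\int\la\Delta_\alpha f_x(x)\ra^2\,\dalpha$, and then Cauchy--Schwarz in $\alpha$ followed by Cauchy--Schwarz in $x$ and the Triebel--Lizorkin embedding $\dot H^{3/4}\hookrightarrow\dot F^{1/2}_{4,2}$ closes the estimate. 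This approach uses the full $L^4_xL^2_\alpha$ structure of $\Delta_\alpha f_x$ rather than only the weaker $L^\infty_x$ H\"older information $[f_x]_{C^{1/4}}\les\lA f\rA_{\dot H^{7/4}}$ on which your argument rests; that extra pointwise-in-$x$ integrability is precisely what lets the paper avoid any operator-theoretic black box.
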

\begin{proof} Recall that 
\begin{align*}
R(f,g)&=-\frac{1}{2\pi}\int_\xR\left(\partial_x\Delta_\alpha g+\partial_x\Delta_{-\alpha} g\right)\left(\mathcal{E}\left(\alpha,\cdot\right) -\frac{(\partial_xf)^2}{1+(\partial_xf)^2}\right)\dalpha\\
&\quad+\frac{1}{\pi}\int_\xR\frac{\partial_x g(.-\alpha)}{\alpha}\mathcal{O}\left(\alpha,\cdot\right) \dalpha.
\end{align*}
The first half of the proof is based on~\cite{Alazard-Lazar}. Namely, write
\begin{align*}
&\partial_x\Delta_\alpha g+\partial_x\Delta_{-\alpha} g=-\frac{1}{\alpha}\partial_\alpha \big(\delta_\alpha g+\delta_{-\alpha}g\big),\\
&\partial_x g(.-\alpha)=\partial_\alpha(\delta_\alpha g),
\end{align*}
as can be verified by elementary calculations. 
Then use these identities to integrate by parts in $\alpha$. By so doing, we find
\begin{align*}
|R(f,g)(x)|\lesssim\int_\xR \frac{|\delta_\alpha g(x)|}{|\alpha|}&\bigg(\left|\mathcal{E}\left(\alpha,x\right) -\frac{(\partial_xf(x))^2}{1+(\partial_xf(x))^2}\right|+|\alpha| |\partial_\alpha\mathcal{E}\left(\alpha,x\right) |\\
&~~~+|\mathcal{O}(\alpha,x)|+|\alpha||\partial_\alpha\mathcal{O}\left(\alpha,x\right) | \bigg)\frac{\dalpha}{|\alpha|}\cdot
\end{align*}
Then it follows from \cite[Lemma 4.5]{Alazard-Lazar} that
$$
|R(f,g)(x)|\lesssim \int_\xR |\Delta_\alpha g(x)|\gamma(\alpha,x)\dalpha
$$
where 
$$
\gamma(\alpha,x)=\frac{1}{\la\alpha\ra}\left(
 |\delta_\alpha f_x(x)|+ |\delta_{-\alpha} f_x(x)|+\frac{|s_\alpha f(x)|}{|\alpha|}+\left|\frac{1}{\alpha}\int_{0}^{\alpha}s_\eta f_x(x)\deta\right|\right).
$$
and $s_\alpha f(x)=\delta_\alpha f(x)+\delta_{-\alpha} f(x)$.\\
We now use different arguments then those used in~\cite{Alazard-Lazar}. 
The main new ingredient here is given by the following inequality:
$$
\int_\xR \gamma(\alpha,x)^2\dalpha\les \int_\xR  |\delta_\alpha f_x(x)|^2\frac{\dalpha}{\alpha^2}\cdot
$$
To prove the latter, it will suffice to show that
that
\begin{align}
&\int_\xR \left|\int_{0}^{\alpha}s_\eta f_x(x)\deta\right|^2\frac{\dalpha}{\alpha^4}
\les \int_\xR  |\delta_\alpha f_x(x)|^2\frac{\dalpha}{\alpha^2},\label{n100}\\
&\int_\xR  |s_\alpha f(x)|^2\frac{\dalpha}{\alpha^4}\lesssim \int_\xR  |\delta_\alpha f_x(x)|^2\frac{\dalpha}{\alpha^2}\cdot\label{n101}
\end{align}
To prove \eqref{n100}, we apply the following 
Hardy's inequality
$$
\int_0^\infty\left(\frac{1}{\alpha}\int_0^\alpha u(\eta)\deta\right)^2\dalpha\le 4 \int_0^\infty u(\alpha)^2\dalpha,
$$
with $u(\eta)=(s_\eta f_x(x))/\eta$. It follows that
\begin{align*}
\int_\xR \left|\int_{0}^{\alpha}s_\eta f_x(x)\deta\right|^2\frac{\dalpha}{\alpha^4}
&=\int_\xR \left|\frac{1}{\alpha^2}\int_{0}^{\alpha}s_\eta f_x(x)\deta\right|^2\dalpha
\le \int_\xR \left|\frac{1}{\alpha}\int_{0}^{\alpha}\frac{s_\eta f_x(x)}{\eta}\deta\right|^2\dalpha\\
&\le 4 \int_\xR \left( \frac{s_\alpha f_x(x)}{\alpha}\right)^2\dalpha.
\end{align*}
Then~\e{n100} follows at once from the triangle inequality 
$\la s_\alpha f\ra^2\le 2\la \delta_\alpha f\ra^2+2\la \delta_{-\alpha} f\ra^2$.

Let us prove \e{n101}. Write
$$
\frac{s_\alpha f}{\alpha}=\frac{\delta_\alpha f+\delta_{-\alpha}f}{\alpha}=\Delta_\alpha f-\Delta_{-\alpha}f
=\big(\Delta_\alpha f-f_x\big)-\big(\Delta_{-\alpha}f-f_x\big),
$$
so
$$
  \frac{(s_\alpha f(x))^2}{\alpha^4}\le 2\frac{(\Delta_\alpha f-f_x)^2}{\alpha^2}+2\frac{(\Delta_{-\alpha}f-f_x)^2}{\alpha^2}.
$$
Now, remember from~\e{Z1} that 
$$
\int_\xR \left(\Delta_\alpha f-f_x(x)\right)^2\frac{\dalpha}{\alpha^2}\leq  \int_\xR \left(\Delta_\alpha f_x(x)\right)^2\dalpha,
$$
together with a similar result for $\Delta_{-\alpha}f-f_x$ (interchanging $\alpha$ and $-\alpha$). This proves~\e{n101}. 

Then, remembering that $\Delta_\alpha u=\delta_\alpha u/\alpha$ and using the Cauchy-Schwarz inequality, we get
\begin{equation*}
|R(f,g)(x)|\lesssim
\left(\int_{\xR}  (\Delta_{\alpha} g(x))^2 \dalpha\right)^{\mez}\left(\int_{\xR} 
(\Delta_\alpha f_x(x))^2\dalpha\right)^{\mez}.
\end{equation*}
By using the Cauchy-Schwarz inequality again, 
we conclude that
$$
\Vert R(f,g)\Vert _{L^2}^2
\lesssim  \left(\int_{\xR}\left(\int_\xR (\Delta_\alpha g(x))^2 \dalpha\right)^2\dx\right)^{\mez}
\left( \int_{\xR}\left(\int_{\xR}(\Delta_\alpha f_x(x))^2\dalpha\right)^2\dx\right)^{\mez}.
$$
Therefore, the estimate~\e{n120} implies that
$$
\Vert R(f,g)\Vert _{L^2}^2
\lesssim \Vert g\Vert _{\dot{H}^{\frac{3}{4}}}^2  \lA f\rA_{\dot{H}^{\frac{7}{4}}}^2,
$$
equivalent to the desired result~\eqref{Z14}.
\end{proof}
\begin{corollary}\label{bounTfg}
There exists a positive constant $C$ such that, for all~$f\in \mathcal{S}(\xR)$,
\begin{equation}\label{Z102}
||\mathcal{T}(f)f||_{\dot H^1}\le C\left(\lA f\rA_{\dot H^{\frac32}}+\lA f\rA_{\dot H^{\frac32}}^2+1+||V(f)||_{L^\infty}\right) \lA f\rA_{\dot H^{2}}.
\end{equation}
\end{corollary}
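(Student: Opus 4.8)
The plan is to reduce the $\dot H^{1}$‑bound to $L^{2}$‑bounds by commuting $\D=(-\Delta)^{1/2}$ past $\mathcal{T}(f)$ and then feeding the outcome into the estimates already proved in this section. Since $\lA\D u\rA_{L^{2}}=\lA u\rA_{\dot H^{1}}$ for every $u$, we have $\lA\mathcal{T}(f)f\rA_{\dot H^{1}}=\lA\D\,\mathcal{T}(f)f\rA_{L^{2}}$, so the starting point is the elementary identity
$$
\D\,\mathcal{T}(f)f=\mathcal{T}(f)\D f+\big[\D,\mathcal{T}(f)\big]f,\qquad
\big[\D,\mathcal{T}(f)\big]f\defn\D\big(\mathcal{T}(f)f\big)-\mathcal{T}(f)\big(\D f\big).
$$
It then suffices to bound each of the two terms on the right in $L^{2}$ by the right‑hand side of~\e{Z102}.

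For the first term I would use the paralinearization formula~\e{n1} with $g=\D f$:
$$
\mathcal{T}(f)\D f=\frac{(\partial_{x}f)^{2}}{1+(\partial_{x}f)^{2}}\,\D^{2}f+V(f)\,\partial_{x}\D f+R(f,\D f).
$$
The first summand has $L^{2}$‑norm $\le\lA\D^{2}f\rA_{L^{2}}=\lA f\rA_{\dot H^{2}}$, since $0\le(\partial_{x}f)^{2}/(1+(\partial_{x}f)^{2})\le1$ pointwise; the second has $L^{2}$‑norm $\le\lA V(f)\rA_{L^{\infty}}\lA\partial_{x}\D f\rA_{L^{2}}=\lA V(f)\rA_{L^{\infty}}\lA f\rA_{\dot H^{2}}$; and for the third, Proposition~\ref{Z19} applied with $g=\D f$ (using $\lA\D f\rA_{\dot H^{3/4}}=\lA f\rA_{\dot H^{7/4}}$; see also~\e{Z15}) gives $\lA R(f,\D f)\rA_{L^{2}}\lesssim\lA f\rA_{\dot H^{7/4}}^{2}$. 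Hence $\lA\mathcal{T}(f)\D f\rA_{L^{2}}\lesssim\big(1+\lA V(f)\rA_{L^{\infty}}\big)\lA f\rA_{\dot H^{2}}+\lA f\rA_{\dot H^{7/4}}^{2}$.

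For the commutator I would invoke Proposition~\ref{Z18} with the trivial weight $\phi\equiv1$ (its proof goes through verbatim for a constant weight, all the $\phi$‑ and $\phi^{2}$‑weighted semi‑norms reducing to ordinary homogeneous Sobolev semi‑norms), so that $\D^{1,\phi}=\D$. Taking $g=f$ in~\e{n18} then gives
$$
\blA\big[\D,\mathcal{T}(f)\big]f\brA_{L^{2}}\lesssim\lA f\rA_{\dot H^{7/4}}^{2}+\lA f\rA_{\dot H^{7/4}}^{3/2}\lA f\rA_{\dot H^{19/12}}^{3/2}.
$$
Finally I would convert every subcritical norm into $\dot H^{3/2}$ and $\dot H^{2}$ by H\"older's inequality on the Fourier side: $\lA f\rA_{\dot H^{7/4}}^{2}\le\lA f\rA_{\dot H^{3/2}}\lA f\rA_{\dot H^{2}}$ and $\lA f\rA_{\dot H^{19/12}}\lesssim\lA f\rA_{\dot H^{3/2}}^{5/6}\lA f\rA_{\dot H^{2}}^{1/6}$, which together give $\lA f\rA_{\dot H^{7/4}}^{3/2}\lA f\rA_{\dot H^{19/12}}^{3/2}\lesssim\lA f\rA_{\dot H^{3/2}}^{2}\lA f\rA_{\dot H^{2}}$. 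Adding the four contributions produces exactly~\e{Z102}.

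The step I expect to be the main obstacle is the commutator $[\D,\mathcal{T}(f)]f$. One cannot instead bound $\lA\mathcal{T}(f)f\rA_{\dot H^{1}}$ by differentiating the paralinearized form~\e{n1} with $g=f$ directly: that would require differentiating the nonlinear coefficient $(\partial_{x}f)^{2}/(1+(\partial_{x}f)^{2})$, producing a term of the type $(\partial_{x}f)f_{xx}\,\D f$ whose $L^{2}$‑norm is \emph{not} controlled by $\lA f\rA_{\dot H^{3/2}}$ and $\lA f\rA_{\dot H^{2}}$ (the relevant product estimate fails at the endpoint), so one would lose a factor $\lA\partial_{x}f\rA_{L^{\infty}}$, precisely what the estimate is meant to avoid. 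Keeping the coefficient inside $\mathcal{T}(f)$ and exploiting the singular‑integral cancellations encoded in Proposition~\ref{Z18} is what circumvents this loss; everything else is bookkeeping and interpolation.
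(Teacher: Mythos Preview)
Your proof is correct and follows essentially the same approach as the paper: both split $\D\,\mathcal{T}(f)f$ into $\mathcal{T}(f)\D f+[\D,\mathcal{T}(f)]f$, apply the paralinearization~\e{n1} with $g=\D f$ together with~\e{Z15} to the first piece and Proposition~\ref{Z18} with $\phi\equiv1$ to the commutator, and then interpolate. The paper's proof is terser (it simply invokes \e{n1}, \e{n18}, \e{Z15} with $\phi\equiv1$ and ``classical interpolation''), but the content is the same as what you wrote out.
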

\begin{proof}
By \eqref{n1} and \eqref{n18} and \eqref{Z15} with $\phi\equiv 1$ one has, 
\begin{align*}
\Vert \mathcal{T}(f)f\Vert_{\dot H^1}&\leq 	\blA\big[\D,\mathcal{T}(f)\big]f\brA_{L^2}+	
\Vert\mathcal{T}(f)(|D|f)\Vert_{L^2}\\
&\lesssim \lA f\rA_{\dot H^{\frac74}}^2
+\lA f\rA_{\dot H^{\frac74}}^{\frac{3}{2}}\lA f\rA_{\dot{H}^{\frac{19}{12}}}^{\frac{3}{2}}+\left(||V(f)||_{L^\infty}+1\right)||f||_{\dot H^2}.
\end{align*}
Then \eqref{Z102} follows from the classical interpolation inequalities in Sobolev spaces.
\end{proof}

\section{Proof of the main results}\label{S:3}
In this section, we prove Theorem~\ref{theo:main} and Theorem~\ref{theo:main2}. 

Following a classical strategy, we shall construct solutions of the Muskat equation in three steps: 
\begin{enumerate}
\item We begin by defining approximate 
systems and proving that the Cauchy problem for the latter are 
well-posed by means of an ODE argument.
\item Secondly, we prove uniform estimates for the solutions of the approximate systems 
on a uniform time interval. The heart of the entire argument is contained in a {\em a priori} estimate 
given by Proposition~\ref{P:3.3} below.
\item Finally, we prove that the sequence of approximate solutions 
converges to a solution of the Muskat equation and conclude the proof by proving a 
uniqueness result. 
\end{enumerate}

\textbf{Notations.} We denote by $\langle\cdot,\cdot\rangle$ the scalar product in $L^2(\xR)$ and 
set $\lA \cdot\rA=\lA \cdot\rA_{L^2}$. 
 \subsection{Approximate systems}

We will define the wanted approximate systems by using a 
version of Galerkin's method based on Friedrichs mollifiers. 
To do so, it is convenient to use smoothing operators 
which are also projections. Consider, for any integer $n$ in $\xN\setminus\{0\}$, the operators $J_n$ defined by 
\be\label{defi:Jn}
\begin{aligned} 
\widehat{J_n u}(\xi)&=\widehat{u}(\xi) \quad &&\text{for} &&\la \xi\ra\le n,\\
\widehat{J_n u}(\xi)&=0 \quad &&\text{for} &&\la \xi\ra> n.
\end{aligned}
\ee
Notice that $J_n$ is a projection since $J_n^2=J_n$.

Recall that the Muskat equation reads
$$
\partial_tf=\frac{1}{\pi}\int_\xR\frac{\partial_x\Delta_\alpha f}{1+\left(\Delta_\alpha f\right)^2}\dalpha.
$$
Remember also from~\S\ref{S:2.3} that the latter is equivalent to 
\be\label{n80}
\partial_tf+\D f = \mathcal{T}(f)f,
\ee
where $\mathcal{T}(f)$ is the operator defined by~\e{def:T(f)f}.

Let us introduce the following approximate Cauchy problems:
\begin{equation}\label{A3}
\left\{
\begin{aligned}
&\partial_t f_n+\D f_n=J_n\big(\mathcal{T}(f_n)f_n\big),\\
& f_n\arrowvert_{t=0}=J_n f_{0}.
\end{aligned}
\right.
\end{equation} 
The next lemma states that this system has smooth global in time solutions.
\begin{lemma}\label{L:3.1}
For all $f_0\in L^{2}(\xR)$, and any $n\in \xN\setminus\{0\}$, the initial value 
problem~\e{A3} has a unique global solution 
$f_n\in C^{1}([0,+\infty);H^{\infty}(\xR))$. 
Moreover
\be\label{n52}
f_n=J_n f_n,
\ee
and, for all time $t\ge 0$,
\be\label{n51}
\lA f_n(t)\rA_{L^2}\le \lA f_0\rA_{L^2}.
\ee
\end{lemma}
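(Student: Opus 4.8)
The plan is to solve the approximate Cauchy problem \eqref{A3} by the Cauchy--Lipschitz theorem in a suitable Banach space, to upgrade the local solution to a global one using the a priori bound \eqref{n51}, and to read off both the regularity and the projection identity \eqref{n52} from the construction.

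First I would work in the closed subspace $E_n=J_n\big(L^2(\xR)\big)$ of those $L^2$ functions whose Fourier transform is supported in $\{|\xi|\le n\}$, endowed with the $L^2$-norm. On $E_n$ the Fourier multiplier $|D|$ is a bounded linear operator (of norm at most $n$), and by the Bernstein inequalities each $H^\sigma$- and each $W^{k,\infty}$-norm of an element of $E_n$ is bounded by a constant $C(n,\sigma,k)$ times its $L^2$-norm. I would then verify that $\mathcal N\colon u\mapsto J_n\big(\mathcal T(u)u\big)$ maps $E_n$ into $E_n$ and is Lipschitz on every ball $\{\|u\|_{L^2}\le R\}$ of $E_n$. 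For the well-definedness one writes $\mathcal T(u)u$ as in \eqref{def:T(f)f} and applies Minkowski's integral inequality together with the pointwise bounds
\[
\|\partial_x\Delta_\alpha u\|_{L^2}\le\min\!\big(\|u_{xx}\|_{L^2},\,2|\alpha|^{-1}\|u_x\|_{L^2}\big),\qquad
\Big\|\tfrac{(\Delta_\alpha u)^2}{1+(\Delta_\alpha u)^2}\Big\|_{L^\infty}\le\min\!\big(1,\,4|\alpha|^{-2}\|u\|_{L^\infty}^2\big).
\]
These bounds make the $\alpha$-integral defining $\mathcal T(u)u$ absolutely convergent in $L^2(\xR)$. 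For the Lipschitz property one splits $\mathcal T(u)u-\mathcal T(v)v=\mathcal T(u)(u-v)+\big(\mathcal T(u)-\mathcal T(v)\big)v$ and uses in addition that $x\mapsto x^2/(1+x^2)$ is bounded and globally Lipschitz, so that $\big|\tfrac{(\Delta_\alpha u)^2}{1+(\Delta_\alpha u)^2}-\tfrac{(\Delta_\alpha v)^2}{1+(\Delta_\alpha v)^2}\big|\lesssim|\Delta_\alpha(u-v)|$, closing the estimate with the Bernstein inequalities on $E_n$. Since $-|D|$ is bounded linear on $E_n$ and $\mathcal N$ is locally Lipschitz, the Cauchy--Lipschitz theorem yields a unique maximal solution $f_n\in C^1\big([0,T_n);E_n\big)$ of \eqref{A3}; this solution satisfies $f_n=J_nf_n$, and since all $H^\sigma$-norms are equivalent to the $L^2$-norm on $E_n$, it automatically belongs to $C^1\big([0,T_n);H^\infty(\xR)\big)$.

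Next I would establish the a priori bound. Pairing \eqref{A3} with $f_n$ in $L^2$ and using that $J_n$ is self-adjoint with $J_nf_n=f_n$, the mollifier becomes transparent and one gets
\[
\frac12\fract\|f_n(t)\|_{L^2}^2=\big\langle -|D|f_n+\mathcal T(f_n)f_n,\,f_n\big\rangle .
\]
By \eqref{main} the right-hand side is precisely the energy balance $\langle \partial_t g,g\rangle$ for $g=f_n$ evolving under the \emph{exact} Muskat equation, and this is $\le 0$ by the classical $L^2$ maximum principle for the Muskat equation (write $\frac{\partial_x\Delta_\alpha f}{1+(\Delta_\alpha f)^2}=\partial_x\arctan(\Delta_\alpha f)$, integrate by parts in $x$ and symmetrize in $\alpha$ to reveal a manifestly nonpositive quantity; see \cite{CG-CMP}). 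Hence $\fract\|f_n(t)\|_{L^2}^2\le 0$, and since $\|f_n(0)\|_{L^2}=\|J_nf_0\|_{L^2}\le\|f_0\|_{L^2}$ we obtain \eqref{n51}. Finally, the blow-up alternative of the Cauchy--Lipschitz theorem asserts that if $T_n<\infty$ then $\|f_n(t)\|_{L^2}\to\infty$ as $t\uparrow T_n$; the uniform bound just proved excludes this, so $T_n=+\infty$.

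The step requiring the most care is the local Lipschitz estimate for $\mathcal N$: because $\alpha$ runs over all of $\xR$, one has to exploit the decay $(\Delta_\alpha u)^2=O(\alpha^{-2})$ of band-limited functions — interpolating between the $|\alpha|^{-1}$ and $|\alpha|^{-2}$ bounds as $\alpha\to\infty$ — in order to keep both $\mathcal T(u)u$ and the difference $\mathcal T(u)u-\mathcal T(v)v$ absolutely convergent, with a Lipschitz constant depending on $n$ and $R$. Everything else is either classical (the $L^2$ maximum principle) or a routine use of Bernstein's inequalities on $E_n$.
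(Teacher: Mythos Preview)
Your proof is correct and follows the same overall strategy as the paper: Cauchy--Lipschitz for local existence, the $L^2$ maximum principle from \cite{CCGRPS-JEMS2013} combined with the blow-up alternative for global existence, and Bernstein's inequalities to upgrade to $H^\infty$.

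The one genuine difference is in the set-up of the ODE argument. You work directly in the closed subspace $E_n=J_n(L^2)$ and observe that the vector field $u\mapsto -\D u+J_n(\mathcal T(u)u)$ already maps $E_n$ into itself, so the identity $f_n=J_nf_n$ is built into the function space. The paper instead introduces an auxiliary problem on all of $L^2$, with extra projections inserted ($\partial_tf_n+J_n\D f_n=J_n(\mathcal T(J_nf_n)J_nf_n)$), solves \emph{that} by Cauchy--Lipschitz, and then shows a posteriori that $(I-J_n)f_n$ satisfies the trivial ODE $\partial_t(I-J_n)f_n=0$ with zero initial data, hence vanishes; only then does it conclude that $f_n$ solves the original system~\eqref{A3}. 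Both are standard variants of the Friedrichs--Galerkin scheme; your version is slightly more direct, while the paper's version has the advantage of citing a ready-made Lipschitz bound for $f\mapsto\mathcal T(f)f$ from \cite{Alazard-Lazar} on $\dot H^1\cap\dot H^{3/2}$ rather than redoing the estimate by hand on $E_n$. For the $L^2$ bound your computation is in fact marginally cleaner than the paper's, since by pairing~\eqref{A3} directly with $f_n$ and using $J_n^*=J_n$, $J_nf_n=f_n$, you avoid the intermediate step involving $\langle(I-J_n)\D f_n,f_n\rangle\ge 0$.
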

\begin{proof}This proof is not new: it follows from the analysis in~\cite[Section 5]{Alazard-Lazar} together with the $L^2$-maximum principle 
in \cite[Section 2]{CCGRPS-JEMS2013}. However, since slight modifications are needed, we include a detailed proof. 

$i)$ We begin by studying the following auxiliary Cauchy problem
\begin{equation}\label{A3-twisted}
\left\{
\begin{aligned}
&\partial_t f_n+J_n\D f_n=J_n\big(\mathcal{T}(J_n f_n)J_n f_n\big),\\
&f\arrowvert_{t=0}=J_n f_{0}.
\end{aligned}
\right.
\end{equation} 
The Cauchy problem \eqref{A3-twisted} has the form
\begin{equation}\label{edo}
\partial_t f_n= F_n(f_n),\quad f_n\arrowvert_{t=0}=J_nf_0,
\end{equation}
where 
$$
F_n(f)=-\D J_n f+J_n\big(\mathcal{T}(J_nf)J_nf\big)
$$
Recall from Proposition~$2.3$ in~\cite{Alazard-Lazar} that 
the map $f\mapsto \mathcal{T}(f)f$ is locally Lipschitz from 
$\dot{H}^1(\xR)\cap \dot{H}^{\tdm}(\xR)$ to $L^2(\xR)$. 
Therefore, since $J_n$ is linear smoothing operator (which means that it is bounded from 
$L^2(\xR)$ into $H^\mu(\xR)$ for any $\mu\ge 0$), the map $f\mapsto J_n\big(\mathcal{T}(J_nf)J_nf\big)$ 
is locally Lipschitz from 
$L^2(\xR)$ to $L^2(\xR)$. This implies that $F_n$ satisfies the same property and hence 
we are thus in position to apply 
the Cauchy-Lipschitz theorem. This gives 
the existence of a unique maximal solution~$f_n$ 
in~$C^{1}([0,T_n);L^{2}(\xR))$. 
Moreover, the 
continuation principle for ordinary differential equations implies that either
\be\label{A4}
T_n=+\infty\qquad\text{or}\qquad \limsup_{t\rightarrow T_n} \lA f_n(t)\rA_{L^2}=+\infty.
\ee
We shall prove in the next step that $T_n=+\infty$. 

Eventually, remembering that $J_n^2=J_n$, we check that the function $(I-J_n)f_n$ solves
$$
\partial_t (I-J_n)f_n=0,\quad (I-J_n)f_n\arrowvert_{t=0}=0.
$$
Therefore $(I-J_n)f_n=0$ which proves that $J_nf_n=f_n$. 

Now, we deduce from $J_nf_n=f_n$ and the equation~\e{A3-twisted} that 
$f_n$ is also a solution to the original equation~\e{A3}. 
In addition, the identity $J_nf_n=f_n$ also implies that $f_n$ is smooth, in particular $f_n$ belongs to~$C^{1}([0,T_n);H^{\infty}(\xR))$. 

$ii)$ To conclude the proof of the proposition, it remains to show that 
$(i)$ the solution is defined globally in time and $(ii)$ 
it satisfies the $L^2$-bound
\be\label{n50}
\lA f_n(t)\rA_{L^2}\le \lA f_0\rA_{L^2}. 
\ee
In fact, in light of the alternative~\e{A4}, it is sufficient to prove the latter inequality: 
by combining~\e{A4} with~\e{n50}, we will obtain that $T_n=+\infty$.

It remains to prove \e{n50}. This estimate is proved in \cite[Section 2]{CCGRPS-JEMS2013} 
for the full equation (that is with $J_n$ replaced by the identity $I$) and we recall the main 
argument to verify that the estimate is uniform in $n$. By definition of $\mathcal{T}(f)f$, one has 
$$
\D f-\mathcal{T}(f)f=\frac{1}{\pi}\int_\xR\frac{\partial_x\Delta_\alpha f}{1+\left(\Delta_\alpha f\right)^2}\dalpha.
$$
Therefore the equation~\e{A3} is equivalent to
$$
\partial_tf_n+(I-J_n)\D f_n  =J_n\left(\frac{1}{\pi}\int_\xR\frac{\partial_x\Delta_\alpha f_n}{1+\left(\Delta_\alpha f_n\right)^2}\dalpha\right).
$$
Using $f_n$ as test function, one has 
\begin{equation*}
\frac{1}{2}\fract \lA f_n(t)\rA_{L^2}^2+\langle (I-J_n)\D f_n, f_n\rangle
= \frac{1}{\pi}\bigg\langle J_n\int_\xR\frac{\partial_x\Delta_\alpha f_n}{1+\left(\Delta_\alpha f_n\right)^2}\dalpha , f_n\bigg\rangle.
\end{equation*}
Now we use three elementary ingredients: firstly, $\langle (I-J_n)\D f_n, f_n\rangle\ge 0$ and $J_n^*=J_n$ as can be verified by applying Plancherel's theorem. and secondly $J_nf_n=f_n$. It follows that
$$
\frac{1}{2}\fract \lA f_n(t)\rA_{L^2}^2\le
\frac{1}{\pi}\bigg\langle \int_\xR\frac{\partial_x\Delta_\alpha f_n}{1+\left(\Delta_\alpha f_n\right)^2}\dalpha , f_n\bigg\rangle.
$$
Now, by \cite[Section 2]{CCGRPS-JEMS2013}, the right-hand side is non-positive since, for any smooth function $f=f(t,x)$, 
\begin{multline*}
\int_\xR\left[\int_\xR\frac{\partial_x\Delta_\alpha f}{1+\left(\Delta_\alpha f\right)^2}\dalpha \right]f(x)\dx
\\
=-\iint_{\xR^2}\log\left[\sqrt{1+\frac{(f(t,x)-f(t,x-\alpha))^2}{\alpha^2}}\right]\dx\dalpha.
\end{multline*}
The proof is complete.
\end{proof}

\subsection{Uniform estimates}\label{S:3.2}
We have seen that the solutions~$f_n$ to the approximate systems~\e{A3} satisfy a uniform $L^2$-estimate (see~\e{n51}). 
We now have to prove uniform $L^2$-estimate 
for the derivatives $\D^{s,\phi}f_n$. 

Let us fix some notations. 
\begin{assumption}\label{A:kappa2}
We consider a 
function $\kappa\colon[0,\infty) \to [1,\infty)$ satisfying Assumption~\ref{A:kappa} together with the following property:
There exists $a\in [0,1/2)$ such that
$$
\kappa(r)\ge \log (4+r)^a\quad \text{for all}\quad r\ge 0.
$$
\end{assumption}
Remember that, by notation,
\begin{equation*}
\phi(r)=\int_{0}^{\infty}\frac{1-\cos(h)}{h^2} \kappa\left(\frac{r}{|h|}\right) \dh, \quad \text{for }r\ge 0.
\end{equation*}
Recall also that $\phi$ and $\kappa$ are equivalent: 
there are $c,C>0$ such that,
$$
\forall r\ge 0,\qquad c\kappa(r)\le \phi(r)\le C \kappa(r).
$$
We denote by $\D^{s,\phi}$ the Fourier multiplier $\D^{s}\phi(\la D_x\ra)$. 

With this notations, our goal in this paragraph is to obtain uniform estimates 
for the functions 
\be\label{n67}
A_n(t)=\blA \D^{\tdm,\phi}f_n(t)\brA_{L^2}^2, \qquad B_n(t)=\blA \D^{2,\phi}f_n(t)\brA_{L^2}^2.
\ee
The following result is the key technical point in this paper.
\begin{proposition}\label{P:3.3}
Assume that $\kappa$ satisfies Assumptions~$\ref{A:kappa}$ and~$\ref{A:kappa2}$. Then 
there exist two positive constants $C_1$ and $C_2$ such that, for all integer $n\in \xN\setminus\{0\}$,
\be\label{Z21'}
\fract A_n(t)+C_1\delta_n(t)B_n(t)\leq C_2 \left( \sqrt{A_n(t)}+A_n(t) \right)\mu_n(t) B_n(t),
\ee
where
\begin{align*}
\delta_n(t)&=\left(1+ \log\left(4+\frac{B_n(t)}{A_n(t)+ \Vert f_0\Vert_{L^2}^2}\right)^{1-2a}\left( A_n(t)+ \Vert f_0\Vert_{L^2}^2\right)\right)^{-1},\\
\mu_n(t)&=\left(\kappa\left(\frac{B_n(t)}{A_n(t)}\right)\right)^{-1}.
\end{align*}
\end{proposition}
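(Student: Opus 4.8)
The starting point is the differential inequality~\eqref{i5}, which was announced as the main technical estimate and which, applied to the approximate solution $f_n$ with the choice of multiplier $\phi$ attached to $\kappa$, gives
$$
\fract A_n(t)+\int_\xR\frac{\bla\D^{2,\phi}f_n\bra^2}{1+(\partial_xf_n)^2}\dx\le CQ(f_n)\blA\D^{2,\phi}f_n\brA_{L^2},
$$
where $Q(f_n)$ is the quantity displayed after~\eqref{i5}. The task therefore splits into two independent problems: (a) bound from below the dissipation term $\int_\xR\bla\D^{2,\phi}f_n\bra^2/(1+(\partial_xf_n)^2)\,\dx$ by $c\,\delta_n(t)B_n(t)$, despite having no $L^\infty$ control on $\partial_xf_n$; and (b) bound from above the right-hand side $Q(f_n)\blA\D^{2,\phi}f_n\brA_{L^2}$ by $C(\sqrt{A_n}+A_n)\mu_n(t)B_n(t)$ using only $A_n$, $B_n$ and $\|f_0\|_{L^2}$. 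First I would establish that~\eqref{i5} does hold for the Galerkin approximations uniformly in $n$ — this uses $J_nf_n=f_n$, $J_n^*=J_n$ from Lemma~\ref{L:3.1}, and the fact that $(I-J_n)\D$ contributes a nonnegative term when tested against $f_n$, exactly as in the $L^2$ bound of Lemma~\ref{L:3.1}, combined with Propositions~\ref{Z3'}, \ref{Z18}, \ref{Z19} applied to $f_n$. From here on I suppress the subscript $n$.

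\emph{The dissipation lower bound.} The hard part is (a). Since $1+(\partial_xf)^2$ is not in $L^\infty$, I would interpolate: one needs an inequality of the form $\|1+(\partial_xf)^2\|_{\text{(suitable)}}\lesssim 1+\bigl(\log(4+B/(A+\|f_0\|_{L^2}^2))\bigr)^{1-2a}(A+\|f_0\|_{L^2}^2)$, which is precisely the structure of $\delta_n^{-1}$. The mechanism is a frequency-splitting/Littlewood--Paley argument for $\partial_xf$: low frequencies (up to a cutoff $N$) contribute $\lesssim N^{1/2}\|f\|_{\dot H^{3/2}}$-type terms to $\|\partial_xf\|_{L^\infty}$, while high frequencies are controlled by $\|f\|_{\dot H^2}$; optimizing $N$ in terms of the ratio $B/A$ produces the logarithmic gain. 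Because the weight $\phi\sim\kappa\sim(\log)^a$ sits in $A$ and $B$ rather than in plain Sobolev norms, the bookkeeping is where Assumption~\ref{A:kappa2} (namely $\kappa(r)\ge\log(4+r)^a$) and the equivalence $\phi\sim\kappa$ from Proposition~\ref{Z9} enter: they let one trade a power $(\log)^{2a}$ of the frequency against the weight, leaving the residual exponent $1-2a$ visible in $\delta_n$. One then plugs this into $\int_\xR\bla\D^{2,\phi}f\bra^2/(1+(\partial_xf)^2)\,\dx\ge (1+\|1+(\partial_xf)^2\|)^{-1}\cdot(\text{appropriate fraction of }B)$ — morally a reverse Hölder / Chebyshev-type step, keeping in mind that one can always localize to the set where $1+(\partial_xf)^2$ is not too large and absorb the complement.

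\emph{The right-hand side upper bound.} For (b), each term of $Q(f)$ is a product of a $\D^{\cdot,\phi}$ or $\D^{\cdot,\phi^2}$ norm (hence expressible via $A=\|\D^{3/2,\phi}f\|_{L^2}^2$ and $B=\|\D^{2,\phi}f\|_{L^2}^2$ by logarithmic-convexity of $s\mapsto\|\D^{s,\phi}f\|_{L^2}$) and a plain Sobolev norm $\|f\|_{\dot H^\sigma}$ or $\|f\|_{H^\sigma}$ with $\sigma\in[3/2,2)$, which can be interpolated between $\|f\|_{\dot H^{3/2}}\le A^{1/2}$ (the weight only helps) and $\|f\|_{\dot H^2}\le B^{1/2}$. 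Collecting exponents, every term of $Q(f)\cdot B^{1/2}$ is bounded by $C(A^{1/2}+A)\,\theta(B/A)\,B$ for some gain factor $\theta(B/A)\to 0$; matching $\theta$ with $\mu_n=\kappa(B/A)^{-1}$ again uses $\kappa(r)\lesssim\log(4+r)^{a}$-type growth together with the fact that interpolating an unweighted norm out of weighted ones loses exactly a negative power of $\log(B/A)$, because the weight $\phi$ is \emph{larger} at high frequency. The factor $\|f_0\|_{L^2}^2$ sneaks in only through the $L^2$-level pieces of the $H^\sigma$ norms, controlled by $\|f\|_{L^2}\le\|f_0\|_{L^2}$ from~\eqref{n51}.

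\emph{Assembly.} Combining (a) and (b) gives~\eqref{Z21'} with $C_1,C_2$ independent of $n$, since all constants come from the fixed embeddings~\eqref{FB},~\eqref{SE0}, Proposition~\ref{Z9}, and the uniform $L^2$ bound~\eqref{n51}. The main obstacle, as indicated, is step (a): producing the precise form of $\delta_n(t)$ — in particular getting the exponent $1-2a$ and not something worse — requires the careful interplay between the frequency cutoff optimization and the logarithmic weight, which is the ``fair amount of bookkeeping'' alluded to in the strategy section; step (b) is more mechanical once one is disciplined about tracking powers of $\log(B/A)$.
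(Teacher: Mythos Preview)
Your outline is correct and matches the paper's own argument, which is organized as Lemma~\ref{L:3.4} (the energy inequality you start from, with the uniform-in-$n$ justification exactly as you say) followed by Lemma~\ref{L:3.5} (the interpolation estimates behind your steps~(a) and~(b)). Two small corrections are worth recording. In step~(a) there is no need for any reverse-H\"older or Chebyshev localization: the paper proves a genuine $L^\infty$ bound~\eqref{linf'} on $\partial_x f_n$ by the frequency-splitting you describe, and then uses the trivial inequality $\int g^2/(1+h^2)\,\dx\ge (1+\|h\|_{L^\infty}^2)^{-1}\int g^2\,\dx$. In step~(b), the $\phi^2$-weighted norm $\blA\D^{7/4,\phi^2}f\brA_{L^2}$ is \emph{not} controlled by log-convexity of $s\mapsto\blA\D^{s,\phi}f\brA_{L^2}$ (the weight is larger), so the paper proves a separate estimate~\eqref{n110} which actually \emph{loses} a factor $\mu_n^{-1}$; this loss is then exactly compensated by the $\mu_n$-gains on the unweighted factors coming from~\eqref{Z20'}.
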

\begin{proof}
We split the analysis into two parts:
\begin{enumerate}
\item We begin by applying the nonlinear estimates proved in Section~\ref{S:2} to deduce a key inequality (see~\e{Z21}) of the form
\be\label{Z21ter}
\fract \blA \D^{\tdm,\phi}f_n\brA_{L^2}^2
+ \int_\xR \frac{\bla\D^{2,\phi}f_n\bra^2}{1+(\partial_x f_n)^2}\dx\le C Q(f_n) \blA\D^{2,\phi}f_n\brA_{L^2},
\ee
where $Q(f_n)$ is bounded in (strict) subspace of $L^2_{t,x}$ by 
$\blA \D^{\tdm,\phi}f_n\brA_{L^\infty_t(L^2_x)}$ and $\blA \D^{2,\phi}f_n\brA_{L^2_t(L^2_x)}$. 
\item Then we apply interpolation type arguments to show that one can absorb the right-hand side of \e{Z21ter} by the left-hand side.
\end{enumerate}

We now proceed to the details and begin with the following result.
\begin{lemma}\label{L:3.4}
There exists a positive constant $C$ such that, for any $n\in\xN\setminus\{0\}$, the approximate 
solution $f_n\in C^{1}([0,+\infty);H^{\infty}(\xR))$ to~\e{A3} satisfies 
\be\label{Z21}
\fract \blA \D^{\tdm,\phi}f_n\brA_{L^2}^2
+ \int_\xR \frac{\bla\D^{2,\phi}f_n\bra^2}{1+(\partial_x f_n)^2}\dx\le C Q(f_n) \blA\D^{2,\phi}f_n\brA_{L^2},
\ee
where
\begin{align*}
Q(f_n)&= \left(\lA f_n\rA_{\dot H^2}+\lA f_n\rA_{\dot H^{\frac{7}{4}}}^2\right) 
\blA\D^{\tdm,\phi}f_n\brA_{L^2}
+\blA\D^{\frac74,\phi}f_n\brA_{L^2}
\lA f_n\rA_{{H}^{\frac74}}\\
&\quad+\left(\lA f_n\rA_{H^{\frac{19}{12}}}^{3/2}+\lA f_n\rA_{\dot H^{\frac74}}^{1/2}\right) \blA\D^{\frac{7}{4},\phi^{2}}f_n\brA^{1/2}_{L^2}
\lA f_n\rA_{\dot H^{\frac74}}.
\end{align*} 
\end{lemma}
\begin{proof}
As we have seen in the proof of Lemma~$\ref{L:3.1}$, $f_n$ satisfies $J_nf_n=f_n$ and hence 
$f_n\in C^1([0,+\infty);H^\infty(\xR))$. 
In particular, all computations below are easily justified. 

The proof is based on the nonlinear estimates established in the previous section, 
together with parabolic energy estimates for the Muskat equation, and a commutator estimate with the Hilbert transform.

We multiply the equation
$$
\partial_t f_n+\D f_n=J_n\mathcal{T}(f_n)f_n,
$$
by $\D^{3,\phi^2}f_n$ and use the following consequences of the Plancherel's identity:
\begin{align*}
&\big\langle\partial_t f_n,\D^{3,\phi^2} f_n\big\rangle=\frac{1}{2}\fract \blA \D^{3/2,\phi}f_n\brA_{L^2}^2,\\
&\big\langle \D f_n,\D^{3,\phi^2} f_n\big\rangle=\blA \D^{2,\phi}f_n\brA_{L^2}^2.
\end{align*}
Now, we need four elementary ingredients:
$$
J_n^*=J_n,\quad J_nf_n=f_n \quad (\text{see}~\e{n52}),\quad 
\D^{3,\phi^2}=\D^{2,\phi}\D^{1,\phi},\quad \big(\D^{1,\phi}\big)^*=\D^{1,\phi}.
$$
Then we easily verify that
\begin{align*}
\big\langle J_n\mathcal{T}(f_n)f_n,\D^{3,\phi^2} f_n\big\rangle&=\big\langle\mathcal{T}(f_n)f_n,J_n\D^{3,\phi^2} f_n\big\rangle
=\big\langle\mathcal{T}(f_n)f_n,\D^{3,\phi^2} J_nf_n\big\rangle\\
&=\big\langle\D^{1,\phi} \mathcal{T}(f_n)f_n,\D^{2,\phi}f_n\big\rangle.
\end{align*}
It follows that
\begin{align*}
\frac{1}{2}\fract \blA \D^{3/2,\phi}f_n\brA_{L^2}^2+ \blA \D^{2,\phi}f_n\brA_{L^2}^2
=\big\langle\D^{1,\phi} \mathcal{T}(f_n)f_n,\D^{2,\phi}f_n\big\rangle.
\end{align*}
Notice that this identity no longer involves the operator $J_n$, which explains that the subsequent estimates are independent of $n$.

Now we commute the operators $\D^{1,\phi}$ and $\mathcal{T}(f_n)$ in the last term, and then 
expand the term $\mathcal{T}(f_n)(\D^{1,\phi}f_n)$ using~\e{n1}. This gives
\begin{align*}
&\frac{1}{2}\fract \blA D^{3/2,\phi}f_n\brA_{L^2}^2
+ \int_\xR \frac{\bla \D^{2,\phi}f_n\bra^2}{1+(\partial_x f_n)^2} \dx=
(I)+(II)+(III)\quad \text{where}\\[1ex]
&(I)\defn\big\langle V(f_n)\partial_x \D^{1,\phi}f_n, \D^{2,\phi}f_n\big\rangle,\\[1ex]
&(II)\defn\big\langle R(f_n,\D^{1,\phi}f_n), \D^{2,\phi}f_n\big\rangle,\\[1ex]
&(III)\defn\Big\langle\left[\D^{1,\phi},\mathcal{T}(f_n)\right]f_n, \D^{2,\phi}f_n\Big\rangle.
\end{align*}

It follows from Propositions \ref{Z18} and~\ref{Z19} that the terms $(II)$ and $(III)$ are estimated by the right-hand side of~\e{Z21}. 
So it remains only to estimate the term $(I)$. To do so, we claim that
\be\label{com1}
(I)\leq C\lA V(f_n)\rA_{\dot{H}^1}\blA \D^{2,\phi}f_n\brA_{L^2}\blA \D^{\tdm,\phi}f_n\brA_{L^2}.
\ee
Assume that this claim is true. Then it will follow from~\e{com1} and Proposition~\ref{Z3'} that $(I)$ is bounded by the right-hand side 
of~\e{Z21}, which will in turn complete the proof.

Now we must prove~\e{com1}. 
We begin by making appear a commutator structure. To do so, 
we notice that, since $\partial_x=-\mathcal{H}\D$, one can rewrite the term $A$ under the form
$$
\langle V(f_n)\partial_x \D^{1,\phi}f_n, \D^{2,\phi}f_n\rangle=-\langle V(f_n)\mathcal{H} \D^{2,\phi}f_n, \D^{2,\phi}f_n\rangle.
$$
We then use $\mathcal{H}^*=-\mathcal{H}$ to infer that
\begin{align}\nonumber
(I)&= -\frac{1}{2}\Big\langle V(f_n)\mathcal{H} \D^{2,\phi}f_n, \D^{2,\phi}f_n\Big\rangle
+\frac{1}{2} \Big\langle \mathcal{H} \big(V(f_n)\D^{2,\phi}f_n\big),\D^{2,\phi}f_n\Big\rangle\\
&= \frac{1}{2} \Big\langle \left[\mathcal{H}, V(f_n)\right ] \D^{2,\phi}f_n,\D^{2,\phi}f_n\Big\rangle.\label{Z106}
\end{align}
Consequently, to prove \e{com1}, it will be sufficient to establish that
\be\label{n2}
\lA \left[\mathcal{H}, V(f_n)\right ] \D^{2,\phi}(f_n)\rA_{L^2}\les\lA V(f_n)\rA_{\dot{H}^1}\blA \D^{3/2,\phi}f_n\brA_{L^2}.
\ee
The latter inequality will be deduced from a commutator estimate of independent interest. We claim that
\begin{equation}\label{Z2}
\lA \left[\mathcal{H}, g_1\right ](\partial_x g_2)\rA_{L^2}
\leq C\lA g_1\rA_{\dot H^{1}}\lA g_2\rA_{\dot H^{\mez}}.
\end{equation}
Notice that the wanted estimate~\e{n2} follows from~\e{Z2} applied with $g_1=V(f)$ and $g_2=\mathcal{H}\D^{1,\phi}$ (since 
$\D^{2,\phi}=\partial_x \mathcal{H}\D^{1,\phi}$).

It remains to prove the commutator estimate~\e{Z2}. Start from the definition of the Hilbert transform (see \e{n3}) and observe that
$$
\lA\left[\mathcal{H}, g_1\right ](\partial_x g_2)\rA_{L^2}^2
=\frac{1}{\pi^2}\int_\xR\left(\int_\xR \frac{g_1(x)-g_1(y)}{x-y}\partial_y(g_2(x)-g_2(y)) \dy\right)^2 \dx.
$$
Integrating by parts in $y$, this gives
\be\label{n4}
\begin{aligned}
\lA\left[\mathcal{H}, g_1\right ](\partial_x g_2)\rA_{L^2}^2
&\lesssim 
\int_\xR \left(\int _\xR\frac{|g_1(x)-g_1(y)\Vert g_2(x)-g_2(y)|}{|x-y|^2}\dy\right)^2\dx\\
&\quad+\int_\xR \left(\int_\xR \frac{\partial_yg_1(y)}{x-y}(g_2(x)-g_2(y)) \dy\right)^2\dx.
\end{aligned}
\ee
Using the Cauchy-Schwarz inequality, we estimate the first term in the right-hand side of \e{n4} 
by,
$$
\left( \int_\xR \left(\int_\xR \frac{|g_1(x)-g_1(y)|^2}{|x-y|^{1+3/2}}\dy\right)^2\dx\right)^{\mez}
\left( \int_\xR \left(\int_\xR \frac{|g_2(x)-g_2(y)|^2}{|x-y|^{1+1/2}}\dy\right)^2\dx\right)^{\mez}.
$$
Using the Lizorkin-Triebel norms introduced in~\e{n5}, the above product 
is in turn estimated from above by 
$$
\lA g_1\rA_{\dot{F}^{\frac34}_{4,2}}^2 \lA g_2\rA_{\dot{F}^{\frac14}_{4,2}}^2.
$$
Now, the Sobolev embedding \e{FB} imply that the right-hand side above is bounded by the right-hand side of \e{Z2}, namely
$$
\lA g_1\rA_{\dot{F}^{\frac34}_{4,2}} \lA g_2\rA_{\dot{F}^{\frac14}_{4,2}}\les 
\lA g_1\rA_{\dot H^{1}}\lA g_2\rA_{\dot H^{\mez}}.
$$
It remains to estimate the second term in the right-hand side of \e{n4}. We use again H\"older's inequality 
to estimate the later term by
$$
 \left(\int_\xR\left(\int_\xR\frac{|\partial_yg_1(y)|^{3/2}}{|x-y|^{1-1/4}} \dy\right)^2 \dx\right)^{\frac{2}{3}}
\left( \int_\xR\left(\int_\xR\frac{|g_2(x)-g_2(y)|^3}{|x-y|^{1+1/2}}\dy\right)^2 \dx\right)^{\frac{1}{3}}.
$$
Then, again, we use~\e{n5} and \e{FB} to estimate the above quantity from above by
$$
\blA \D^{-\frac{1}{4}}\big(|\partial_yg_1|^{\tdm}\big)\brA_{L^2}^{\frac{4}{3}} \Vert g_2\Vert _{\dot{F}^{\frac{1}{6}}_{6,3}}^2
\lesssim \Vert |g_1\Vert _{\dot H^{1}}^2\Vert g_2\Vert _{\dot H^{\mez}}^2.
$$
Here we have used the fact that
\begin{equation*}
\blA \D^{-\frac{1}{4}}h\brA_{L^2}\le 
C \lA h\rA_{L^{\frac{4}{3}}},~~\forall h\in L^{\frac{4}{3}}(\xR). 
\end{equation*}
This completes the proof of~\eqref{Z2} and hence the proof of the lemma. 
\end{proof}

We now continue with the interpolation arguments alluded to previously. 
We want to estimate the various norms which appear in $Q(f)$ in terms of $A_n$ and $B_n$. 
Indeed, given Lemma~\ref{L:3.4}, the proof of Proposition~\ref{P:3.3} reduces to establishing the following result.

\begin{lemma}\label{L:3.5}
Consider a real number $7/4\le s\leq 2$. Then there exists a positive constant $C$ such that, for all 
$n\in \xN\setminus\{0\}$ and for all $t\ge 0$,
\begin{align}
&\lA f_n(t)\rA_{\dot H^s}\le C\mu_n(t)A_n(t)^{2-s}  B_n(t)^{s-\tdm},\label{Z20'}\\
&\blA \D^{\frac{7}{4},\phi^{2}}f_n\brA_{L^2}\leq \mu_n(t)^{-1} A_n(t)^{\uq}B_n(t)^{\uq},\label{n110}
\end{align}
and moreover,
\be
\lA\partial_x f_n(t)\rA_{L^\infty}\le C \log\left(4+\frac{B_n(t)}{A_n(t)+ \lA f_0\rA_{L^2}^2}\right)^{\frac{1-2a}{2}}\left( A_n(t)^{\mez}
+ \lA f_0\rA_{L^2}\right).\label{linf'}
\ee
\end{lemma}
\begin{proof}
For ease of reading, we skip the indexes $n$. 

$i)$ Let $\lambda >0$. 
By cutting the frequency space into low and high frequencies, at the frequency threshold $\la \xi\ra=\lambda$, we obtain
\begin{align*}
\lA f\rA_{\dot H^s}^2&\les \int_\xR |\xi|^{2s}|\hat{f}|^2 \dxi=\int_{|\xi|\le \lambda}|\xi|^{2s}|\hat{f}|^2 \dxi
+\int_{|\xi|> \lambda} |\xi|^{2s}|\hat{f}|^2 \dxi\\
&\les\int_{|\xi|\le \lambda} \frac{\la \xi\ra^{2s-3}}{\kappa(|\xi|)^2}|\xi|^{3}
\phi(|\xi|)^2|\hat{f}|^2 \dxi
+\int_{|\xi|> \lambda} \frac{\la \xi\ra^{2s-4}}{\kappa(|\xi|)^2}|\xi|^{4}\phi(|\xi|)^2|\hat{f}|^2 \dxi,
\end{align*}
where we have used the equivalence $\phi\sim\kappa$. Now Plancherel's theorem implies that
\be\label{n92}
\begin{aligned}
&\int_{|\xi|\le \lambda} |\xi|^{3}\phi(|\xi|)^2|\hat{f}|^2 \dxi\les 
\blA \D^{\tdm,\phi}f\brA_{L^2}^2,\\ 
&\int_{|\xi|> \lambda} |\xi|^{4}\phi(|\xi|)^2|\hat{f}|^2 \dxi\les 
\blA \D^{2,\phi}f\brA_{L^2}^2.
\end{aligned}
\ee
On the other hand, we claim that
\begin{alignat}{4}
&(i)\quad &&\frac{\la \xi\ra^{2s-4}}{\kappa(|\xi|)^2}\le  \frac{\lambda^{2s-4}}{\kappa(\lambda)^2}\qquad &&\text{for}\quad&&\la\xi\ra\ge \lambda,\\
&(ii)&& \frac{\la \xi\ra^{2s-3}}{\kappa(|\xi|)^2}\le  \frac{\lambda^{2s-3}}{\kappa(\lambda)^2} &&\text{for}\quad 
&&\la\xi\ra\le \lambda.\label{n97}
\end{alignat}
The first claim follows directly from the facts that $\kappa$ is increasing and the assumption $s\le 2$ 
(which implies that $2s-4\le 0$). 
To prove the second claim, write
$$
\frac{r^{2s-3}}{\kappa(r)^2}=\frac{ r^{2s-3}}{\log(4+r)^{2}}\times\frac{\log(4+r)^{2}}{\kappa(r)^2}\cdot
$$
By assumption, $\log(4+r)/\kappa(r)$ is increasing. On the other hand, by computing the derivative, we 
verify that the other factor is also an increasing function (since we assume that $s\ge 7/4$).  
It follows that $r\mapsto r^{2s-3}/\kappa(r)^2$ is also increasing which implies the second claim. 
It follows that
$$
\lA f\rA_{\dot H^s}^2\lesssim \lambda^{2s-3} (\kappa(\lambda))^{-2} \Vert \D^{3/2,\phi}f\Vert_{L^2}^2+\lambda^{2s-4}  (\kappa(\lambda))^{-2} \Vert\D^{2,\phi}f\Vert_{L^2}^2.
$$
Chose $\lambda=\Vert \D^{2,\phi}(f)\Vert _{L^2}^2/\Vert \D^{3/2,\phi}(f)\Vert _{L^2}^2$ to obtain
\begin{equation*}
\lA f\rA_{\dot H^s}^2\lesssim \left(\kappa\left(B_n/A_n\right)\right)^{-2}
A_n^{4-2s}B_n^{2s-3},
\end{equation*}
equivalent to the wanted result \e{Z20'}. \\
\newline
$ii)$ As above, for  $\lambda>0$ one has, 
\begin{align*}
\Vert\D^{\frac{7}{4},\phi^{2}}f_n\Vert_{L^2}^2&=\int |\xi|^{\frac{7}{2}}\phi(|\xi|)^4 |\hat f|^2\dxi\\
&\lesssim \lam^{\frac{1}{2}}\phi(\lam)^2 \int_{|\xi|\leq \lambda} |\xi|^{3}\phi(|\xi|)^2 |\hat f|^2\dxi\\
&\quad+\lam^{-\frac{1}{2}}\phi(\lam)^2 \int_{|\xi|\leq \lambda} |\xi|^{4}\phi(|\xi|)^2 |\hat f|^2\dxi.
 \end{align*}
Since $\phi\sim\kappa$, we deduce that 
$$
\blA \D^{\frac{7}{4},\phi^{2}}f_n\brA_{L^2}^2
\lesssim \lam^{1/2}\kappa(\lam)^2
\blA \D^{3/2,\phi}f\brA _{L^2}^2
+\lam^{-1/2}\kappa(\lam)^2 \blA \D^{2,\phi}f\brA_{L^2}^2.
$$
Now take
$$
\lam=\frac{\blA\D^{2,\phi}f\brA_{L^2}^2}{\blA \D^{\tdm,\phi}f\brA_{L^2}^2},
$$
to get the wanted result~ \eqref{n110}.

$iii)$ Starting from the inverse Fourier transform, 
using the Cauchy-Schwarz inequality together with estimates similar to~\e{n92}, 
we obtain
\begin{align*}
\Vert \partial_x f\Vert_{L^\infty}&\leq \int_\xR |\xi| |\hat f| \dxi \\
&=\int_{|\xi|>\lambda} \kappa(|\xi|)^{-1} |\xi|^{-1} \kappa(|\xi|) |\xi|^{2}|\hat f| \dxi \\
&\quad +\int_{|\xi|\leq \lambda} \kappa(|\xi|)^{-1} (|\xi|+1)^{-\mez} \kappa(|\xi|) |\xi|(1+|\xi|)^{\mez}|\hat f| \dxi \\&\lesssim\left(\int_{|\xi|>\lam} \frac{1}{|\xi|^{2}\kappa^2(|\xi|)} \dxi \right)^{\mez} \Vert \D^{2,\phi}f\Vert_{L^2}\\
&\quad+ \left(\int_{|\xi|\leq \lam} \frac{1}{(|\xi|+1)\kappa^2(|\xi|)} \dxi \right)^{\mez}
\left( \Vert \D^{3/2,\phi}f\Vert_{L^2}+ \Vert f\Vert_{L^2}\right).
\end{align*}
Now observe that
$$
\int_{|\xi|>\lam} \frac{1}{|\xi|^{2}\kappa^2(|\xi|)} \dxi \le 
\frac{1}{\kappa^2(\lam)} \int_{|\xi|>\lam} \frac{1}{|\xi|^{2}} \dxi \leq  \frac{1}{\kappa^2(\lam)\lam}\cdot
$$
It remains to estimate the second integral. 
Remembering that $\kappa(r)\ge \log(4+r)^a$ by assumption, we begin by writing that
$$
\frac{1}{(1+r)\kappa^2(r)}\le \frac{4}{(4+r)\log(4+r)^{2a}}.
$$

On the other hand, with $\beta=(1-2a)/a$ we have $\beta\ge 0$ (since $a< 1/2$) and moreover
$$
\frac{1}{(4+r)\log(4+r)^{2a}}=\frac{1}{a\beta}\fracr \log(4+r)^{a\beta}.
$$
Therefore,
$$
\left(\int_{|\xi|\leq \lam} \frac{1}{(|\xi|+1)\kappa^2(|\xi|)} \dxi\right)^{\frac{1}{2}} \les  \log(4+\lam)^{\frac{1-2a}{2}}.
$$
We conclude that
\begin{align*}
\Vert \partial_x f\Vert_{L^\infty}
&\lesssim \kappa(\lam)^{-1}\lam^{-1/2}\Vert \D^{2,\phi}f\Vert_{L^2}+ \log(4+\lam)^{\frac{1-2a}{2}}\left( \Vert \D^{3/2,\phi}f\Vert_{L^2}+ \Vert  f\Vert_{L^2}\right)
\\ &\lesssim \log(4+\lam)^{\frac{1-2a}{2}}\left(\lam^{-1/2}\Vert \D^{2,\phi}f\Vert_{L^2}+\Vert \D^{3/2,\phi}f\Vert_{L^2}+ \Vert  f\Vert_{L^2}\right).
\end{align*}
Remembering that $\Vert  f\Vert_{L^2}\le \Vert  f_0\Vert_{L^2}$ (see~\e{n51}) and then 
choosing $\lambda$ such that
$$
\lambda^{1/2}=\frac{\Vert \D^{2,\phi}f\Vert_{L^2}}{\Vert \D^{3/2,\phi}f\Vert_{L^2}+ \Vert f_0\Vert_{L^2}},
$$ 
we obtain~\e{linf'}. This completes the proof.
\end{proof}
Now the energy estimate~\e{Z21'} follows directly from Lemma~\ref{L:3.4} and Lemma~\ref{L:3.5}. 
\end{proof}

For later purposes, we conclude this paragraph by recording 
a corollary of the inequalities used to prove Lemma~\ref{L:3.5}.
\begin{corollary}
Consider a function $f\in \mathcal{S}(\xR)$ and set
$$
M=\lA f\rA_{\frac{3}{2},\frac{1}{3}}+\lA f\rA_{L^2}.
$$
Then there holds
\begin{equation}\label{Z103}
\lA f\rA_{\dot H^2}	+\lA \mathcal{T}(f)f\rA_{\dot H^1}
\le C(M+1)^2\log\left(4+\frac{\lA f\rA_{2,\frac{1}{3}}}{M}\right)^{-\frac{1}{6}}
\lA f\rA_{2,\frac{1}{3}},
\end{equation}
for some absolute constant $C$ independent of $M$.
\end{corollary}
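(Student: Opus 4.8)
The strategy is to combine the nonlinear bound for $\mathcal{T}(f)f$ in Corollary~\ref{bounTfg} with the interpolation inequalities established in the proof of Lemma~\ref{L:3.5}, applied to the specific weight $\kappa=\kappa_{1/3}$, i.e. $\kappa(r)=\log(4+r)^{1/3}$. This function satisfies Assumptions~\ref{A:kappa} and~\ref{A:kappa2} with $a=1/3<1/2$, so by Proposition~\ref{Z9} the associated $\phi$ from~\eqref{n10} satisfies $\phi\sim\kappa_{1/3}$; consequently, writing $A\defn\lA f\rA_{\frac{3}{2},\frac{1}{3}}^2$ and $B\defn\lA f\rA_{2,\frac{1}{3}}^2$, we have $A\sim\blA\D^{\tdm,\phi}f\brA_{L^2}^2$ and $B\sim\blA\D^{2,\phi}f\brA_{L^2}^2$. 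We record that $M^2\sim A+\lA f\rA_{L^2}^2$, hence $A\lesssim M^2$, $\lA f\rA_{L^2}\le M$, and $\lA f\rA_{\dot H^{\frac{3}{2}}}\lesssim\sqrt A\lesssim M$ (the last since $\log(4+|\xi|)^{2/3}\gtrsim1$). The case $f\equiv0$ being trivial, we assume $A,B,M>0$. Finally, although the inequalities~\eqref{Z20'} and~\eqref{linf'} of Lemma~\ref{L:3.5} are stated for the approximate solutions $f_n$, their proofs are Fourier-analytic identities valid for any $f\in\mathcal{S}(\xR)$; in~\eqref{linf'} one simply reads $\lA f\rA_{L^2}$ instead of $\lA f_0\rA_{L^2}$, the inequality $\lA f_n\rA_{L^2}\le\lA f_0\rA_{L^2}$ being the only place the initial datum was used.

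First I would estimate $\lA f\rA_{\dot H^2}$. By~\eqref{Z20'} with $s=2$, $\lA f\rA_{\dot H^2}\le C\kappa(B/A)^{-1}\sqrt B=C\log(4+B/A)^{-1/3}\sqrt B$. Since $A\lesssim M^2$ we have $B/A\gtrsim(\sqrt B/M)^2$, and since $\log(4+cx^2)\sim\log(4+x)$ up to multiplicative constants for $x\ge0$, this gives $\log(4+B/A)\gtrsim\log(4+\sqrt B/M)$. As moreover $\log(4+B/A)\ge\log4>1$, we obtain
$$
\lA f\rA_{\dot H^2}\le C\log(4+\sqrt{B}/M)^{-1/6}\sqrt{B}.
$$

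Next, for $\mathcal{T}(f)f$ I would use~\eqref{Z102}:
$$
\lA\mathcal{T}(f)f\rA_{\dot H^1}\le C\big(\lA f\rA_{\dot H^{\frac{3}{2}}}+\lA f\rA_{\dot H^{\frac{3}{2}}}^2+1+\lA V(f)\rA_{L^\infty}\big)\lA f\rA_{\dot H^2}.
$$
The first three terms in the parenthesis are $\lesssim M+M^2+1\lesssim(M+1)^2$. For $\lA V(f)\rA_{L^\infty}$, estimate~\eqref{Z101} gives $\lA V(f)\rA_{L^\infty}\lesssim\int_\xR|\xi|\,|\hat f(\xi)|\dxi$, and re-running the argument of part~$iii)$ of the proof of Lemma~\ref{L:3.5} (with $a=1/3$, so the exponent $(1-2a)/2$ equals $1/6$, and with $\lA f\rA_{L^2}$ in place of $\lA f_0\rA_{L^2}$) yields $\int_\xR|\xi|\,|\hat f|\dxi\lesssim\log(4+B/M^2)^{1/6}M$, using $A+\lA f\rA_{L^2}^2\sim M^2$ and $\sqrt A+\lA f\rA_{L^2}\lesssim M$. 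Hence the parenthesis is $\lesssim(M+1)^2\log(4+B/M^2)^{1/6}$; multiplying by $\lA f\rA_{\dot H^2}\le C\log(4+B/A)^{-1/3}\sqrt B$ and using $B/M^2\lesssim B/A$ to bound $\log(4+B/M^2)^{1/6}\lesssim\log(4+B/A)^{1/6}$, we get
$$
\lA\mathcal{T}(f)f\rA_{\dot H^1}\lesssim(M+1)^2\log(4+B/A)^{-1/6}\sqrt B\lesssim(M+1)^2\log(4+\sqrt B/M)^{-1/6}\sqrt B,
$$
the last step by the comparison of the previous paragraph.

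Adding the two bounds and recalling $\sqrt B\sim\lA f\rA_{2,\frac{1}{3}}$ (so also $\sqrt B/M\sim\lA f\rA_{2,\frac{1}{3}}/M$), together with $(M+1)^2\ge1$, we obtain~\eqref{Z103}. The proof is mostly bookkeeping, and the one delicate point is the balance of logarithmic powers: the gain $\log^{-1/3}$ produced by the parabolic/interpolation estimate~\eqref{Z20'} for $\lA f\rA_{\dot H^2}$ must dominate the loss $\log^{+1/6}$ coming from the $L^\infty$-bound on the transport coefficient $V(f)$ (a loss which is unavoidable in the non-Lipschitz regime), leaving precisely the net gain $\log^{-1/6}$ of the statement; and replacing the ratio $B/A$ of the natural energy quantities by the ratio appearing in~\eqref{Z103} costs only multiplicative constants, because $A\lesssim M^2$ and $\log(4+cx^2)\sim\log(4+x)$.
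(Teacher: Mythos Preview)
Your proof is correct and follows essentially the same route as the paper's: both combine the bound~\eqref{Z102} (via~\eqref{Z101}) with the frequency-splitting interpolation estimates behind~\eqref{Z20'} and~\eqref{linf'} for $\kappa(r)=\log(4+r)^{1/3}$, balancing the $\log^{+1/6}$ loss from $\lA V(f)\rA_{L^\infty}$ against the $\log^{-1/3}$ gain from $\lA f\rA_{\dot H^2}$. Your write-up is in fact more explicit than the paper's about the passage between the ratios $B/A$, $B/M^2$, and $\sqrt{B}/M$ inside the logarithm, which the paper absorbs into the phrase ``it follows from the proof of~\eqref{linf'}''.
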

\begin{proof}
It follows from the proof of \e{linf'} with $\phi(r)=\log(4+r)^{\frac{1}{3}}$ that we have the two following estimates:
	\begin{align*}
	&	\int|\xi| |\hat f(\xi)|\dxi\lesssim  \log\left(4+\frac{||f||_{2,\frac{1}{3}}}{M}\right)^{\frac{1}{6}}M,\\&
	||f||_{\dot H^2}\lesssim \log\left(4+\frac{||f||_{2,\frac{1}{3}}}{M}\right)^{-1/3} ||f||_{2,\frac{1}{3}}.
	\end{align*}
 Therefore, by combining these with  \eqref{Z101} and \eqref{Z102}, we get that 
	\begin{align*}
	&||f||_{\dot H^2}	+||\mathcal{T}(f)f||_{\dot H^1}\\&\lesssim \left(\lA f\rA_{\dot H^{\frac32}}+\lA f\rA_{\dot H^{\frac32}}^2+1+\log\left(4+\frac{||f||_{2,\frac{1}{3}}}{M}\right)^{\frac{1}{6}}M\right) \log\left(4+\frac{||f||_{2,\frac{1}{3}}}{M}\right)^{-\frac{1}{3}} ||f||_{2,\frac{1}{3}}\\&\lesssim (M+1)^2\log\left(4+\frac{||f||_{2,\frac{1}{3}}}{M}\right)^{-\frac{1}{6}}||f||_{2,\frac{1}{3}},
	\end{align*}
which is the wanted result~\eqref{Z103}. 
\end{proof}

\subsection{Uniform estimates for small initial data globally in time}\label{S:3.3}
In this paragraph, we apply Proposition~\ref{P:3.3} to obtain uniform estimates globally in time, 
assuming some smallness assumption.

\begin{proposition}\label{pro1}
There exists two positive constants $c$ and $C$ such that the following 
property holds. 
For all initial data $f_0$ in $\mathcal{H}^{\tdm,\frac{1}{3}}(\xR)$ satisfying 
\be\label{nUG0}
\lA f_0\rA_{\tdm,\frac13}\left(\lA f_0\rA_{L^2}^2+1\right)  \leq  c,
\ee
where the semi-norm $\lA \cdot\rA_{\tdm,\frac{1}{3}}$ is as defined in~\e{n141}, and 
for all integer $n$ in $\xN\setminus\{0\}$,  the solution 
$f_n$ to the approximate Cauchy problem~\e{A3} satisfies
\begin{equation}\label{nUG1}
\sup_{t\in [0,+\infty)}\lA f_n(t)\rA_{\tdm,\frac13}\leq  \lA f_0\rA_{\tdm,\frac13},
\end{equation}
together with
\begin{equation}\label{nUG2}
\int_0^{+\infty}\bigg[[\lA \partial_tf_n\rA_{\dot{H}^1}^2+\lA f_n\rA_{\dot{H}^2}^2
+\lA\mathcal{T}(f_n)f_n\rA_{\dot H^1}^2+\frac{ \lA f_n\rA_{2,\frac{1}{3}}^2}{\log(4+\lA f_n\rA_{2,\frac{1}{3}})^{\frac{1}{3}}}\bigg]\dt \leq  C\lA f_0\rA_{\tdm,\frac13}^2.
\end{equation}

Furthermore, there exists a subsequence of $(f_n)$ converging to a solution $f$  of the Muskat equation. 
Also,  $f$ satisfies \eqref{nUG1} and \eqref{nUG2} with $f_n=f$. 
\end{proposition}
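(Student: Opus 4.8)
The plan is to follow the three-step scheme outlined in~\S\ref{S:3}. First I would turn the differential inequality of Proposition~\ref{P:3.3} into a closed \emph{a priori} bound by absorbing its right-hand side into the dissipative term on the left; then I would run a continuity argument to propagate that bound for all $t\ge0$ and integrate it in time; finally I would extract a convergent subsequence of $(f_n)$ and pass to the limit in the equation. Throughout I would take $\kappa(r)=\big(\log(4+r)\big)^{1/3}$, which satisfies Assumptions~\ref{A:kappa} and~\ref{A:kappa2} with $a=\tfrac13$, so that $1-2a=a=\tfrac13$; I let $\phi$ be the associated weight from~\e{n10} and $A_n,B_n$ be as in~\e{n67}, and recall $A_n\sim\lA f_n\rA_{\tdm,\frac13}^2$, $B_n\sim\lA f_n\rA_{2,\frac13}^2$, and $A_n(0)\lesssim\lA f_0\rA_{\tdm,\frac13}^2$.

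The key point for the first step is the elementary inequality
\[
\mu_n(t)\,\delta_n(t)^{-1}=\Big(\log\big(4+\tfrac{B_n}{A_n}\big)\Big)^{-1/3}\Big(1+\log\big(4+\tfrac{B_n}{A_n+\Vert f_0\Vert_{L^2}^2}\big)^{1/3}\big(A_n+\Vert f_0\Vert_{L^2}^2\big)\Big)\lesssim 1+A_n(t)+\Vert f_0\Vert_{L^2}^2,
\]
which holds because $\tfrac{B_n}{A_n+\Vert f_0\Vert_{L^2}^2}\le\tfrac{B_n}{A_n}$ makes the ratio of the two logarithms at most $1$. Hence $C_2(\sqrt{A_n}+A_n)\mu_n\le\tfrac12 C_1\delta_n$ once $\sqrt{A_n}\,(1+A_n+\Vert f_0\Vert_{L^2}^2)$ drops below a fixed threshold, and the smallness hypothesis~\e{nUG0} is tailored so that this holds whenever $A_n(t)\le R$, with $R$ slightly larger than $A_n(0)$ (using $A_n(0)\lesssim\lA f_0\rA_{\tdm,\frac13}^2$ and $\lA f_0\rA_{\tdm,\frac13}(\Vert f_0\Vert_{L^2}^2+1)\le c$). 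Feeding this back into~\e{Z21'} yields $\fract A_n(t)+\tfrac12 C_1\,\delta_n(t)B_n(t)\le 0$ whenever $A_n(t)\le R$; thus $A_n$ stays non-increasing as long as it stays below $R$, and a standard continuity argument (examining $\sup\{T:A_n\le R\text{ on }[0,T]\}$) gives $A_n(t)\le A_n(0)$ for all $t\ge0$, which is~\e{nUG1}.

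Integrating $\fract A_n+\tfrac12 C_1\delta_n B_n\le0$ over $[0,\infty)$ gives $\int_0^\infty\delta_n(t)B_n(t)\,\dt\lesssim A_n(0)\lesssim\lA f_0\rA_{\tdm,\frac13}^2$, and there remains the conversion into~\e{nUG2}. From~\e{linf'} with $a=\tfrac13$ one reads off $1+\Vert\partial_x f_n\Vert_{L^\infty}^2\lesssim\delta_n^{-1}$, so that $\int_\xR\frac{|\D^{2,\phi}f_n|^2}{1+(\partial_x f_n)^2}\dx\gtrsim\delta_n B_n$ and this term is integrable in time; then~\e{Z20'} with $s=2$ gives $\lA f_n\rA_{\dot H^2}^2\lesssim\mu_n^2 B_n$, the corollary~\e{Z103} controls $\lA f_n\rA_{\dot H^2}+\lA\mathcal T(f_n)f_n\rA_{\dot H^1}$ in the same way, and, $J_n$ being a contraction, $\lA\partial_t f_n\rA_{\dot H^1}\le\lA f_n\rA_{\dot H^2}+\lA\mathcal T(f_n)f_n\rA_{\dot H^1}$ by~\e{A3}. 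The work is to match the various logarithmic gains against the logarithmic loss coming from the uncontrolled factor $1+(\partial_x f_n)^2$, i.e. to check that each of these quantities is, uniformly in $n$ and with the $f_0$-dependence permitted by~\e{nUG0}, dominated by a multiple of $\delta_n B_n$; this bookkeeping is the main technical obstacle of the argument.

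Finally, \e{n51}, \e{nUG1} and \e{nUG2} confine $(f_n)$ to a bounded subset of $L^\infty(0,\infty;\mathcal H^{\tdm,\frac13})\cap L^2(0,\infty;H^2)$ with $(\partial_t f_n)$ bounded in $L^2(0,\infty;\dot H^1)$. A standard Aubin--Lions--Simon argument (with tightness in~$x$ provided by the $\dot H^2$ and $\mathcal H^{\tdm,\frac13}$ bounds) then produces a subsequence converging to some $f$, strongly in $L^2_{\mathrm{loc}}\big(0,\infty;H^s_{\mathrm{loc}}\big)$ for $1<s<2$ and weak-$*$ in the spaces above. I would then pass to the limit in~\e{A3} tested against $\psi\in C^\infty_c((0,\infty)\times\xR)$: the linear part converges since $\D$ is self-adjoint and $J_n\psi\to\psi$, and for the nonlinear part one uses $J_n^*=J_n$ together with the strong convergence of $(f_n)$ and the uniform $L^2_tH^2_x$ bound to get $J_n\big(\mathcal T(f_n)f_n\big)\rightharpoonup\mathcal T(f)f$, relying on the local Lipschitz continuity of $g\mapsto\mathcal T(g)g$ from $\dot H^1\cap\dot H^{\tdm}$ to $L^2$ recalled from~\cite{Alazard-Lazar} (applied along the convergent subsequence, with dominated convergence in the integral defining $\mathcal T(f)f$). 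Hence $f$ solves~\e{eq2.1b}, and the weak-$*$ lower semicontinuity of the norms lets one pass to the limit in~\e{nUG1}--\e{nUG2}, so that $f$ satisfies them too.
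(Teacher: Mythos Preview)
Your proposal is correct and follows essentially the same route as the paper's own proof: choose $\kappa(r)=(\log(4+r))^{1/3}$, use Proposition~\ref{P:3.3}, absorb the right-hand side of~\e{Z21'} into the dissipative term via a smallness/continuity argument, integrate in time, and pass to the limit by compactness. Your formulation of the absorption step through the clean bound $\mu_n\,\delta_n^{-1}\lesssim 1+A_n+\Vert f_0\Vert_{L^2}^2$ is exactly the content of the paper's reduction from~\e{Z5} to~\e{Z6} (the paper just writes it out with the substitution $X=\sqrt{A_n}+A_n$, $Y=A_n+\Vert f_0\Vert_{L^2}^2$, $\lambda=\log(\cdots)^{1/3}$), and your conversion of $\int\delta_nB_n\,\dt$ into~\e{nUG2} via~\e{linf'},~\e{Z20'},~\e{Z103} matches the paper's ``noticing that $\delta_n(t)\gtrsim\log(4+B_n(t))^{-1/3}$'' and its appeal to~\e{Z103}.
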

\begin{proof}
Fix $\kappa(r)=\big(\log(4+r)\big)^{\frac{1}{3}}$, 
define $\phi$ by~\e{n10} and then consider $A_n$ and $B_n$ as given by~\e{n67}. 
Notice that, since $\phi\sim \kappa$, we have
$$
\blA \D^{\tdm,\phi}g\brA_{L^2}\sim\lA g\rA_{\tdm,\frac{1}{3}}.
$$

The estimate~\e{Z21'} implies that
\be\label{Zglobal}
\begin{aligned}
\fract A_n(t)&+C_1\frac{B_n(t)}{1+ \log\left(4+\frac{B_n(t)}{A_n(t)+ \Vert f_0\Vert_{L^2}^2}\right)^{\frac{1}{3}}
\left( A_n(t)+ \Vert f_0\Vert_{L^2}^2\right)}\\
&\leq C_2 \left( \sqrt{A_n(t)}+A_n(t) \right)\log\left(4+\frac{B_n(t)}{A_n(t)}\right)^{-\frac{1}{3}}B_n(t)
\\
&\leq C_2 \left( \sqrt{A_n(t)}+A_n(t) \right)\log\left(4+\frac{B_n(t)}{A_n(t)+ \Vert f_0\Vert_{L^2}^2}\right)^{-\frac{1}{3}}B_n(t).
\end{aligned}
\ee
We want to absorb the right-hand side by the left-hand side. 
To do so, we shall prove that
\begin{multline}\label{Z5}
C_2 \left(  \sqrt{A_n(t)}+A_n(t)  \right)\log\left(4+\frac{B_n(t)}{A_n(t)+\Vert f_0\Vert_{L^2}^2}\right)^{-\frac{1}{3}}  \\
\le \frac{1}{2}\frac{C_1}{1+ \log\left(4+\frac{B_n(t)}{A_n(t)+ \Vert f_0\Vert_{L^2}^2}\right)^{\frac{1}{3}}
\left( A_n(t)+ \Vert f_0\Vert_{L^2}^2\right)}\cdot
\end{multline}
Set
$$
X=\sqrt{A_n(t)}+A_n(t),\quad Y=A_n(t)+\Vert f_0\Vert_{L^2}^2,\quad 
\lambda =\log\left(4+\frac{B_n(t)}{A_n(t)+ \Vert f_0\Vert_{L^2}^2}\right)^{\frac{1}{3}}.
$$
Then \e{Z5} is equivalent to
$$
C_2 X \le \frac{C_1}{2}\frac{\lambda}{1+\lambda Y}.
$$
The latter inequality will be satisfied provided that $2C_2X(Y+1)\le C_1$. 
This means that \e{Z5} will be satisfied provided that
\begin{align}\label{Z6}
C_2 \left( \sqrt{A_n(t)}+A_n(t) \right)\left(A_n(t)+ \Vert f_0\Vert_{L^2}^2+1\right)  \leq \frac{C_1}{2}\cdot
\end{align}
We thus have proved that if \e{Z6} is true for all time $t$, then \e{Z5} is also true for all time. 
On the other hand, let us assume that \e{Z5} is true for all time. Then 
\e{Zglobal} implies that
\be\label{nUG4}
\fract A_n(t)+\frac{C_1}{2}\delta_n(t)B_n(t)\le 0.
\ee
This immediately implies that $A_n$ is decreasing, which implies that \e{Z6} is true also for time $t$  
provided that it holds at initial time. 
By an elementary continuity argument, one can make the previous reasoning rigorous. 
This proves that~\e{nUG1} holds provided that the assumption~\e{nUG0} is satisfied 
with
$$
c=\frac{C_1}{8(C_1+C_2)}\cdot
$$

Integrating~\e{nUG4} in time and noticing that 
$\delta_n(t)\gtrsim \log(4+B_n(t))^{-\frac{1}{3}}$, we also get that the function $t\mapsto  B_n(t)\log(4+B_n(t))^{-\frac{1}{3}}$ is integrable on $[0,+\infty)$. By virtue 
of~\eqref{Z103} and using the equation $\partial_t f_n=-\D f_n+J_n\big(\mathcal{T}(f_n)f_n\big)$, 
we end up with~\e{nUG2}. 
Eventually, by the standard compactness theorem, there exists a subsequence of $(f_n)$ converging to 
a solution $f$ 
of the Muskat equation. 
\end{proof}

\subsection{Uniform estimates for arbitrary initial data}\label{S:critical}
We now prove uniform estimates for arbitrary initial data in $\mathcal{H}^{\tdm,\frac{1}{3}}(\xR)$, without any smallness assumption. This is the most delicate step. Indeed, 
as explained in Remark~\ref{R:1.4}, one important feature of this 
problem is that 
the estimates will not only depend on the norm of the initial data: 
they depend on the initial data themselves. As a consequence, 
we are forced to estimate 
the approximate solutions $f_n$ for a norm whose definition 
depends on the initial data. 
More precisely, we will estimate 
the norm $\D^{\tdm,\phi}f_n$ for some function $\phi$ depending on $f_0$. 
To define this function $\phi$, we begin with the following general lemma.

\begin{lemma}\label{L:critical}
For any nonnegative integrable function 
$\omega\in L^1(\mathbb{R})$, there exists a function  $\eta\colon[0,\infty) \to [1,\infty)$ satisfying the following properties: 
\begin{enumerate}
\item $\eta$ is increasing and $\lim\limits_{r\to \infty}\eta(r)=\infty$,
\item $\eta(2r)\leq 2\eta(r)$ for any $r\geq 0$,
\item $\omega$ satisfies the enhanced integrability condition:
\begin{equation}
\int_\xR \eta(|r|) \omega(r) \dr<\infty,
\end{equation}
\item moreover, the function $r\mapsto \eta(r)/\log(4+r)$ is decreasing on  $[0,\infty)$.
\end{enumerate}
\end{lemma}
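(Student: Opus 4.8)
The plan is to construct $\eta$ explicitly as a piecewise ``logarithmically affine'' function whose nodes are spread out sparsely enough that the enhanced integrability holds, while $\eta$ stays comparable to a small multiple of $\log(4+r)$ so that the monotonicity and doubling requirements become automatic. First I would record that, since $\omega\in L^1(\xR)$, the tail function $g(R)\defn\int_{\{|r|>R\}}\omega(r)\,\dr$ is non-increasing, continuous, and satisfies $g(R)\to 0$ as $R\to+\infty$. Using this I would choose inductively a strictly increasing sequence $0=\rho_0<\rho_1<\rho_2<\cdots$ with $\rho_k\to+\infty$ such that $\rho_1\ge 12$ and, for every $k\ge1$,
\[
\log(4+\rho_{k+1})\ge\frac{k+2}{k+1}\log(4+\rho_k)\qquad\text{and}\qquad g(\rho_k)\le 2^{-k};
\]
all of these can be met simultaneously because $g\to 0$ (for the first it is enough to ask $\rho_{k+1}\ge(4+\rho_k)^{3/2}$). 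Then I would set
\[
\eta(r)=(k+1)+\frac{\log\big((4+r)/(4+\rho_k)\big)}{\log\big((4+\rho_{k+1})/(4+\rho_k)\big)}\qquad\text{for }r\in[\rho_k,\rho_{k+1}),\ k\ge0.
\]

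The verification of properties (1)--(4) is then bookkeeping. On $[\rho_k,\rho_{k+1})$ the function $\eta$ increases strictly from $k+1$ to $k+2$, the pieces match at the nodes, $\eta(0)=1$, $\eta\ge1$ throughout, and $\eta(\rho_k)=k+1\to\infty$: this gives (1). For (3) I would split $\xR$ into the annuli $\{\rho_k\le|r|<\rho_{k+1}\}$, bound $\eta\le k+2$ there, use $\int_{\{\rho_k\le|r|<\rho_{k+1}\}}\omega\le g(\rho_k)$, and sum, obtaining $\int_\xR\eta(|r|)\,\omega(r)\,\dr\le 2\lA\omega\rA_{L^1}+\sum_{k\ge1}(k+2)2^{-k}<\infty$. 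For (4) I would compute, with $u=\log(4+r)$ and $a_k=\log(4+\rho_k)$, that on $[\rho_k,\rho_{k+1}]$
\[
\frac{\eta(r)}{\log(4+r)}=\frac{1}{u}\Big(k+1-\frac{a_k}{a_{k+1}-a_k}\Big)+\frac{1}{a_{k+1}-a_k},
\]
whose coefficient of $1/u$ is nonnegative precisely by the growth condition $a_{k+1}/a_k\ge(k+2)/(k+1)$ (the case $k=0$ being exactly what $\rho_1\ge12$ ensures); hence $\eta(r)/\log(4+r)$ is non-increasing in $u$, so in $r$, on each piece, and by continuity at the nodes it is non-increasing on all of $[0,\infty)$. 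Finally (2) follows from (4) and the elementary bound $\log(4+2r)\le 2\log(4+r)$ (valid for $r\ge0$ since $(4+r)^2\ge4+2r$): as $2r\ge r$,
\[
\eta(2r)=\frac{\eta(2r)}{\log(4+2r)}\,\log(4+2r)\le\frac{\eta(r)}{\log(4+r)}\,\log(4+2r)\le 2\,\eta(r).
\]

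The main obstacle here is not depth but keeping all the constraints on the nodes $\rho_k$ mutually consistent, and above all getting the monotonicity of $r\mapsto\eta(r)/\log(4+r)$ right on the very first interval and across the junctions between consecutive pieces; once the growth condition relating $\rho_{k+1}$ to $\rho_k$ is built into the construction, everything else is routine, and the doubling constant $2$ drops out for free from the comparison $\log(4+2r)\le 2\log(4+r)$.
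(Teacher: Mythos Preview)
Your construction is correct and follows essentially the same strategy as the paper: both build $\eta$ as a piecewise ``logarithmically affine'' function, interpolating between integer values at a sparse sequence of nodes chosen so that the tail integrals $\int_{|r|\ge\rho_k}\omega$ decay geometrically. Your presentation is in fact slightly cleaner in two respects: you identify the sharp growth condition $a_{k+1}/a_k\ge(k+2)/(k+1)$ that makes property~(4) work (the paper imposes the cruder $\alpha_k\ge\alpha_{k-1}^{10}$), and you derive the doubling bound~(2) directly from~(4) via $\log(4+2r)\le 2\log(4+r)$, whereas the paper simply asserts~(2) without argument; the only cosmetic caveat is that with $\rho_1=12$ exactly your ratio $\eta/\log(4+r)$ is constant on $[0,\rho_1]$, so if ``decreasing'' is read strictly you should take $\rho_1>12$.
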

\begin{proof}
Consider a sequence of real-number $(\alpha_k)_{k\ge 1}$ such that 
$\alpha_{1}\geq e^{5}$ and $\alpha_{k}\geq  \alpha_{k-1}^{10}$ and in addition 
\begin{equation}
\forall k\ge 1,\qquad \int_{|r|\geq \alpha_k}\omega (r)\dr\leq 2^{-k}.
\end{equation}
We set
\begin{equation}\label{ODE1}
\eta(r)=\left\{ 
\begin{aligned}
&2 ~~~ &\text{if }&~~0\leq r< \alpha_1,\\ 
&k+1+\frac{\log(\frac{4+r}{4+\alpha_{k}})}{\log(\frac{4+\alpha_{k+1}}{4+\alpha_{k}})} \qquad&\text{if }&~~\alpha_{k}\leq r< \alpha_{k+1}.
\end{aligned} \right.
\end{equation}
It is easy to check that $\eta\colon [0,\infty) \to [1,\infty)$ is an increasing function 
converging to $+\infty$ when $r$ goes to $+\infty$. 
Moreover, $\eta$ satisfies $\eta(2r)\leq 2\eta(r)$ for any $r\geq 0$. 

In addition,
\begin{align*}
\int \eta(|r|) \omega(r)\dr&\leq \int_{|r|\leq \alpha_{1}} 2 \omega(r)\dr+\sum_{k=1}^{\infty}(k+2)\int_{\alpha_{k}\leq |r|\leq \alpha_{k+1}}\omega(r)\dr\\
&\leq   2 ||\omega||_{L^1}+\sum_{k=1}^{\infty}(k+2)2^{-k}\\&\leq   2 ||\omega||_{L^1}+C.
\end{align*}
It remains to prove that $r\mapsto \eta(r)/\log(4+r)$ is decreasing. To do so, write
\begin{equation}
\fracr\left(\frac{\eta(r)}{\log(4+r)} \right)=\frac{1}{\log(4+r)} \left(\eta'(r)-\frac{1}{4+r}\frac{\eta(r)}{\log(4+r)}\right).
\end{equation}
So, for $0\leq r< \alpha_1$,
\begin{equation}
\fracr\left(\frac{\eta(r)}{\log(4+r)} \right)<0,
\end{equation}
while for $\alpha_k\leq r<\alpha_{k+1}$ with $k\ge 1$, we have
\begin{align*}
\fracr\left(\frac{\eta(r)}{\log(4+r)} \right)&\leq \frac{1}{(4+r)\log(4+r)^2} \left(\frac{\log(4+r)}{\log(\frac{4+\alpha_{k+1}}{4+\alpha_{k}})}-k-1\right)\\&\leq  \frac{1}{(4+r)\log(4+r)^2} \left(\frac{\log(4+\alpha_{k+1})}{\log(\frac{4+\alpha_{k+1}}{4+\alpha_{k+1}^{1/10}})}-2\right)<0,
\end{align*}
where we have used  $\alpha_{k+1}\geq e^{5\times 10^k}$.

This proves that $r\mapsto \eta(r)/\log(4+r)$ is decreasing on $[0,\infty)$. The proof is complete.
\end{proof}
After this short d\'etour, we return to the main line of our development. 
Consider a function $f_0$ in $\mathcal{H}^{\tdm,\frac{1}{3}}(\xR)$. 
It immediately follows from the previous lemma and Plancherel's theorem that there exists an 
function $\tilde{k}\colon[0,\infty) \to [1,\infty)$ 
such that
\begin{equation}\label{n131}
\int_\xR |\xi|^3 \log\big(4+|\xi|^2\big)^{\frac{2}{3}}(\tilde{k}(\xi))^2 \bla \hat f_0(\xi)\bra^2 \dxi <+\infty,
\end{equation}
and such that $\tilde{k}$ is increasing, 
$r\mapsto \tilde{k}(r)/\log(e+r)$ is decreasing; 
$\tilde{k}(2r)\leq c_0\tilde{k}(r)$ and $\lim\limits_{r\to \infty}\tilde{k}(r)=\infty$.

Next we now define a function $\kappa_0\colon [0,+\infty)\to[1,+\infty)$ by
$$
\kappa_0(r)=\big(\log(4+|\xi|)\big)^{\frac{1}{3}} \, \tilde{k}(|\xi|).
$$
together with the companion function $\phi_0$ defined by~\e{n10}, that is
\begin{equation}\label{defi:phi0}
\phi_0(\lam)=\int_{0}^{\infty}\frac{1-\cos(h)}{h^2} \kappa_0\left(\frac{\lam}{h}\right) \dh, \quad \text{for }\lambda\ge 0.
\end{equation}
\begin{proposition}
Consider an initial data $f_0$ in $\mathcal{H}^{\tdm,\frac{1}{3}}(\xR)$ and denote by 
$\phi_0$ the function defined above in~\e{defi:phi0}. Set 
\begin{equation*}
M_0=\blA \D^{\tdm,\phi_0}f_0\brA_{L^2}^2.
\end{equation*} Then, there exists $T_0>0$ depending on $M_0$ and   $\Vert f_0\Vert_{L^2}$ such that, 
for any integer $n$ in $\xN\setminus\{0\}$, the solution 
$f_n$ to the approximate Cauchy problem~\e{A3} satisfies
\begin{equation}
\sup_{t\in [0,T_0]}\blA \D^{\tdm,\phi_0}f_n(t)\brA_{L^2}^2\leq 2M_0
\end{equation}
and 
\begin{equation}\label{Z104}
\int_0^{T_0}\left(\lA \partial_tf_n\rA_{\dot{H}^1}^2+\lA f_n\rA_{\dot{H}^2}^2+||\mathcal{T}(f_n)(f_n)||_{\dot H^1}^2+\frac{ ||f_n||_{2,\frac{1}{3}}^2}{\log(4+||f_n||_{2,\frac{1}{3}})^{\frac{1}{3}}}\right)\dt \leq  CM_0,
\end{equation}
for some absolute constant  $C>0$ independent of $f_0$. 

Furthermore, there exists a subsequence of $(f_n)$ converging to a solution $f$  of the Muskat equation 
which satisfies~\eqref{nUG1} and \eqref{nUG2} with $f_n$ replaced by~$f$.

\end{proposition}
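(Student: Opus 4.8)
The plan is to run a continuity argument on the a~priori estimate of Proposition~\ref{P:3.3}, applied with the weight $\kappa=\kappa_0=(\log(4+r))^{\frac13}\,\tilde k(r)$ and the corresponding $\phi=\phi_0$ from~\eqref{defi:phi0}. The whole point of inserting the slowly growing factor $\tilde k$, supplied by Lemma~\ref{L:critical} so that $f_0\in\mathcal H^{\tdm,\phi_0}(\xR)$ (that is, $M_0=\blA\D^{\tdm,\phi_0}f_0\brA_{L^2}^2<\infty$, see~\eqref{n131}), is that $\kappa_0$ grows \emph{strictly faster} than the borderline weight $(\log(4+r))^{\frac13}$; it is this extra room that compensates for the equality $a=1-2a$ at $a=\frac13$. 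Since $\kappa_0$ satisfies Assumption~\ref{A:kappa} and Assumption~\ref{A:kappa2} with $a=\frac13$ (as $\kappa_0(r)\ge(\log(4+r))^{\frac13}$), Proposition~\ref{P:3.3} gives, with $A_n,B_n$ as in~\eqref{n67} built from $\phi_0$,
\begin{equation*}
\fract A_n+C_1\delta_n B_n\le C_2\big(\sqrt{A_n}+A_n\big)\mu_n B_n,\qquad \mu_n=\frac{1}{\kappa_0(B_n/A_n)},
\end{equation*}
with $\delta_n$ the weight from that proposition (for $a=\frac13$). Observing that $A_n(0)=\blA\D^{\tdm,\phi_0}J_nf_0\brA_{L^2}^2\le M_0$, I would let $T_n^{\ast}$ be the largest time on which $A_n\le 2M_0$ and aim to show $T_n^{\ast}\ge T_0$ for some $T_0>0$ independent of $n$ (depending on $M_0$ and $\lA f_0\rA_{L^2}$, hence also on $f_0$ through $\tilde k$).

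The core is a dichotomy valid at each time $t\in[0,T_n^{\ast}]$. There $A_n\le 2M_0$ and $\lA f_n\rA_{L^2}\le\lA f_0\rA_{L^2}$ (by~\eqref{n51}), so $\sqrt{A_n}+A_n\le X_0$ and $A_n+\lA f_0\rA_{L^2}^2\le Y_0$ with $X_0,Y_0$ depending only on $M_0$ and $\lA f_0\rA_{L^2}$. If $B_n(t)/A_n(t)\ge\Lambda$: since $A_n\le A_n+\lA f_0\rA_{L^2}^2$, the logarithm inside $\delta_n$ is bounded by $\log(4+B_n/A_n)$, whence $\delta_n/\mu_n\gtrsim \tilde k(B_n/A_n)/(1+Y_0)\ge \tilde k(\Lambda)/(1+Y_0)$; because $\tilde k(\Lambda)\to\infty$, fixing $\Lambda=\Lambda(M_0,\lA f_0\rA_{L^2},\tilde k)$ large enough forces $C_2X_0\mu_n\le\tfrac12 C_1\delta_n$, so $\fract A_n\le-\tfrac12 C_1\delta_n B_n\le 0$. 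If instead $B_n(t)/A_n(t)<\Lambda$: then $B_n<2\Lambda M_0$ and $\mu_n\le 1$ (as $\kappa_0\ge 1$), so dropping the dissipative term, $\fract A_n\le C_2X_0 B_n\le 2C_2X_0\Lambda M_0=:K$. In both cases $\fract A_n\le K$, hence $A_n(t)\le M_0+Kt$; with $T_0:=M_0/K$, a standard continuity argument gives $T_n^{\ast}\ge T_0$ and $\sup_{[0,T_0]}A_n\le 2M_0$, which is the first assertion.

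For the integral bound~\eqref{Z104} I would integrate the differential inequality over $[0,T_0]$. On the set where $B_n/A_n\ge\Lambda$ one has $\fract A_n+\tfrac12 C_1\delta_n B_n\le 0$, so that $\int\delta_n B_n\,\dt$ is controlled by the total decrease of $A_n$, itself $\lesssim M_0$ (since $A_n$ stays in $[0,2M_0]$ and its total increase is $\le KT_0=M_0$); on the complementary set $B_n\le 2\Lambda M_0$. Using that $\delta_n\gtrsim\log(4+B_n)^{-\frac13}$ up to constants depending on $M_0,\lA f_0\rA_{L^2}$ wherever $B_n$ is large, one deduces $\int_0^{T_0}B_n\log(4+B_n)^{-\frac13}\,\dt\lesssim M_0$. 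Since $\tilde k\ge 1$ gives $\lA f_n\rA_{2,\frac13}^2\lesssim B_n$ and $\log(4+\lA f_n\rA_{2,\frac13})\lesssim\log(4+B_n)$, this bounds the last term in~\eqref{Z104}; then $\lA f_n\rA_{\dot H^2}$, $\lA\mathcal T(f_n)f_n\rA_{\dot H^1}$ and, via $\partial_tf_n=-\D f_n+J_n(\mathcal T(f_n)f_n)$, $\lA\partial_tf_n\rA_{\dot H^1}$ are estimated by~\eqref{Z103} (with $M\sim\lA f_n\rA_{L^2}+\lA f_n\rA_{\tdm,\frac13}\lesssim 1+\sqrt{M_0}+\lA f_0\rA_{L^2}$). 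Finally, the uniform bounds just obtained, together with the compactness argument of the proof of Proposition~\ref{pro1} (Aubin--Lions, plus passage to the limit in the nonlinearity using strong $L^2_{t,x,\mathrm{loc}}$ convergence), produce a subsequence of $(f_n)$ converging on $[0,T_0]$ to a solution $f$ of the Muskat equation satisfying~\eqref{nUG1} and~\eqref{nUG2} with $f_n$ replaced by~$f$.

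The main obstacle is the bootstrap step: one has to extract an existence time $T_0$ that does not shrink to $0$ as $n\to\infty$, although $\delta_n$ is \emph{not} bounded below, so that the dissipative term on the left of the energy inequality cannot by itself absorb $B_n$. This is exactly what the enhancement $\tilde k\to\infty$ buys: it makes the right-hand side absorbable once the frequency ratio $B_n/A_n$ is large, while for $B_n/A_n$ bounded the quantity $B_n$ is bounded by $2\Lambda M_0$ and $A_n$ grows only linearly. This mechanism is also the source of the feature announced in Remark~\ref{R:1.4}: in the limiting regime $a=\frac13$ the weight $\phi_0$, and hence $T_0$, genuinely depend on the initial datum $f_0$ and not merely on its norm.
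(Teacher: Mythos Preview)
Your proof is correct and follows essentially the same approach as the paper. The only organizational difference is that the paper packages your dichotomy into a single function
\[
\mathcal{E}\bigl(\varrho,\lA f_0\rA_{L^2}^2\bigr)
=\sup_{r\ge 0}\Bigg\{C_2\,\frac{(\sqrt{\varrho}+\varrho)\,r}{\tilde k(r/\varrho)\,[\log(4+r/\varrho)]^{1/3}}
-\frac{C_1}{2}\,\frac{r}{1+[\log(4+r/\varrho)]^{1/3}(\varrho+\lA f_0\rA_{L^2}^2)}\Bigg\},
\]
observes that it is finite for each $\varrho$ (precisely because $\tilde k\to\infty$ makes the first term absorbable for large $r$, while for bounded $r$ both terms are bounded---exactly your two cases), and sets $T_0=M_0/\mathcal{E}(2M_0,\lA f_0\rA_{L^2}^2)$. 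Your explicit threshold $\Lambda$ and constant $K=2C_2X_0\Lambda M_0$ amount to the same thing, and the remainder of the argument (integration to obtain~\eqref{Z104} via~\eqref{Z103}, then compactness) is identical.
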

\begin{remark}\label{R:3.8}
Notice that the time $T_0$ depends on $f_0$ and not only on~$\blA f_0\brA_{\mathcal{H}^{\tdm,\frac{1}{3}}}$. 
\end{remark}
\begin{proof}
We apply \eqref{Z21'} for the quantities
\begin{align*}
A_n(t)=\blA \D^{\tdm,\phi_0}f_n(t)\brA_{L^2}^2, \qquad B_n(t)=\blA \D^{2,\phi_0}f_n(t)\brA_{L^2}^2.
\end{align*}
This gives that
\be\label{Z21''}
\fract A_n(t)+C_1\delta_n(t)B_n(t)\leq C_2 \left( \sqrt{A_n(t)}+A_n(t) \right)\mu_n(t) B_n(t),
\ee
where
\begin{align*}
\delta_n(t)&=\left(1+ \left[\log\left(4+\frac{B_n(t)}{A_n(t)+ \Vert f_0\Vert_{L^2}^2}\right)\right]^{1-2a}\left( A_n(t)+ \Vert f_0\Vert_{L^2}^2\right)\right)^{-1},\\
\mu_n(t)&=\left(\log\left(4+\frac{B_n(t)}{A_n(t)}\right)\right)^{-\frac{1}{3}}\times \left(\tilde{k}
\left(\frac{B_n(t)}{A_n(t)}\right)\right)^{-1}.
\end{align*}
Given $\varrho\ge 0$, define the function
\begin{align*}
\mathcal{E}\left(\varrho,\Vert f_0\Vert_{L^2}^2\right)
=\sup_{r\ge 0} \Bigg\{&C_2 
\frac{\left(\sqrt{\varrho}+\varrho\right)r}{\tilde{k}\left(\frac{r}{\varrho}\right)\left[\log\left(4+\frac{r}{\varrho}\right)\right]^{1/3}}
\\
&-\frac{C_1}{2}\frac{r}{1+ \Big[\log\Big(4+\frac{r}{\varrho}\Big)\Big]^{1/3}
\left(\varrho+ \Vert f_0\Vert_{L^2}^2\right)}\Bigg\}.
\end{align*}
Since $\rho\mapsto\tilde{k}(\rho)$ is increasing, 
directly from the definition of $\mathcal{E}\left(\varrho,\Vert f_0\Vert_{L^2}^2\right)$, we verify that 
the function $\varrho\to \mathcal{E}(\varrho,\Vert f_0\Vert_{L^2}^2)$ is increasing. 
On the other hand, since $\kappa(\rho)$ tends to $+\infty$ as $\rho$ goes to $+\infty$, we verify that 
$$
\forall \varrho\ge 0,\qquad \mathcal{E}(\varrho,\Vert f_0\Vert_{L^2}^2)<\infty.
$$
Thus, 
\begin{equation}
\fract A_n(t)+\frac{C_1}{2}\delta_n(t)B_n(t)\leq \mathcal{E}\left(A_n(t),\Vert f_0\Vert_{L^2}^2\right).
\end{equation}
and {\em a fortiori}
\begin{equation*}
\fract A_n(t)\leq \mathcal{E}\left(A_n(t),\Vert f_0\Vert_{L^2}^2\right).
\end{equation*}
Then by standard arguments, one obtains
\begin{equation}
\sup_{t\in [0,T_0]}\blA \D^{\tdm,\phi_0}f_n(t)\brA_{L^2}^2\leq 2M_0.
\end{equation}
with \begin{equation*}
T_0=\frac{M_0}{\mathcal{E}\left(2M_0,\Vert f_0\Vert_{L^2}^2\right)},~~~M_0=\blA \D^{\tdm,\phi_0}f_0\brA_{L^2}^2.
\end{equation*}Moreover, as proof of Propostion \ref{pro1}, we also have \eqref{Z104} and   a subsequence of $(f_n)$ converging to a solution $f$  of the Muskat equation. 
Also,   $f$ satisfies \eqref{nUG1} and \eqref{nUG2} with $f_n=f$. 
The proof is complete. 
\end{proof}
\subsection{Uniqueness}\label{S:3.5}
The following proposition implies that the solution of the Muskat equation is unique. 
\begin{proposition}
Consider two solutions $f_1,f_2$ of the Muskat equation in $[0,T]\times\mathbb{R}$ (for some $T<\infty$), 
with initial data $f_{1,0},f_{2,0}$ respectively, satisfying 
\begin{equation}\label{Z105}
\sup_{t\in [0,T]}\lA f_k(t)\rA_{\tdm,\frac13}^2+	\int_0^{T} \log\Big(4+\lA f_k\rA_{2,\frac{1}{3}}\Big)^{-\frac{1}{3}} ||f_k||_{2,\frac{1}{3}}^2\dt \leq M<\infty,~~k=1,2.
\end{equation}
Then the difference $g=f_1-f_2$ is estimated by
	\begin{equation}
	\label{Z107} \sup_{t\in [0,T]} \Vert g(t) \Vert_{\dot{H}^\mez}\leq   \Vert g(0) \Vert_{\dot{H}^\mez}\exp\left(C(M)\sum_{k=1}^2\int_0^T\log\left(4+||f_k||_{2,\frac{1}{3}}\right)^{-\frac{1}{3}}
	||f_k||_{2,\frac{1}{3}}^2 \dt \right).
	\end{equation}
\end{proposition}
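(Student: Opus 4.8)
The plan is to run a weighted $\dot H^{1/2}$ energy estimate on the difference $g=f_1-f_2$, in the spirit of $\S\ref{S:3.2}$. First I would rewrite the Muskat equation in the form $\pa_t f+\D f=\mathcal{T}(f)f$ (see~\e{main}), so that
\[
\pa_t g+\D g=\mathcal{T}(f_1)f_1-\mathcal{T}(f_2)f_2=\mathcal{T}(f_1)g+\big(\mathcal{T}(f_1)-\mathcal{T}(f_2)\big)f_2 .
\]
Applying the paralinearization~\e{n1} to $\mathcal{T}(f_1)g$, moving the elliptic part $\frac{(\pa_x f_1)^2}{1+(\pa_x f_1)^2}\D g$ to the left-hand side, and taking the $L^2$-scalar product with $\D g$ (which turns $\langle \pa_t g,\D g\rangle$ into $\tfrac12\fract\lA g\rA_{\dot H^{1/2}}^2$), one obtains
\[
\frac12\fract\lA g\rA_{\dot H^{1/2}}^2+\int_\xR\frac{\bla\D g\bra^2}{1+(\pa_x f_1)^2}\dx=(I)+(II)+(III),
\]
with $(I)=\langle V(f_1)\pa_x g,\D g\rangle$, $(II)=\langle R(f_1,g),\D g\rangle$ and $(III)=\langle(\mathcal{T}(f_1)-\mathcal{T}(f_2))f_2,\D g\rangle$. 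These manipulations are legitimate for a.e.\ $t$ because, by~\e{Z105} together with~\e{Z20'} and~\e{Z103}, each $f_k$ lies in $L^\infty_t\mathcal{H}^{3/2,1/3}\cap L^2_t\dot H^2$ with $\pa_t f_k\in L^2_t\dot H^1$, hence $g$ has the same regularity.

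For $(I)$ and $(II)$ I would reuse the estimates of $\S\ref{S:2}$. Since $\pa_x=-\mathcal{H}\D$ and $\mathcal{H}^*=-\mathcal{H}$, the argument leading to~\e{com1} gives $(I)=\tfrac12\big\langle[\mathcal{H},V(f_1)]\D g,\D g\big\rangle$; applying the commutator estimate~\e{Z2} with $g_1=V(f_1)$ and $g_2=\mathcal{H}g$ (recall $\D=\pa_x\mathcal{H}$) and then Proposition~\ref{Z3'} yields $|(I)|\les\big(\lA f_1\rA_{\dot H^2}+\lA f_1\rA_{\dot H^{7/4}}^2\big)\lA g\rA_{\dot H^{1/2}}\lA g\rA_{\dot H^1}$. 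For $(II)$, Cauchy--Schwarz and~\e{Z14} give $|(II)|\le\lA R(f_1,g)\rA_{L^2}\lA g\rA_{\dot H^1}\les\lA f_1\rA_{\dot H^{7/4}}\lA g\rA_{\dot H^{3/4}}\lA g\rA_{\dot H^1}$.

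The genuinely new term is $(III)$. Writing $x=\Delta_\alpha f_1$, $y=\Delta_\alpha f_2$, the one-point inequality $\bla\frac{x^2}{1+x^2}-\frac{y^2}{1+y^2}\bra\le|x-y|=|\Delta_\alpha g|$ (a degenerate case of Lemma~\ref{Z10}) gives $\bla(\mathcal{T}(f_1)-\mathcal{T}(f_2))f_2(x)\bra\le\frac1\pi\int_\xR\bla\Delta_\alpha (f_2)_x(x)\bra\,\bla\Delta_\alpha g(x)\bra\dalpha$. Cauchy--Schwarz in $\alpha$, then in $x$, together with the Triebel--Lizorkin identity~\e{n120} and the embedding~\e{FB}, yield $\lA(\mathcal{T}(f_1)-\mathcal{T}(f_2))f_2\rA_{L^2}\les\lA f_2\rA_{\dot H^{7/4}}\lA g\rA_{\dot H^{3/4}}$, so that $|(III)|\les\lA f_2\rA_{\dot H^{7/4}}\lA g\rA_{\dot H^{3/4}}\lA g\rA_{\dot H^1}$ — structurally $(II)$ with $f_1$ replaced by $f_2$. (If the logarithmic bookkeeping below requires it, one moves $\D^{1/2}$ onto the operator difference instead and proves the sharper $\dot H^{1/2}$-version $\lA(\mathcal{T}(f_1)-\mathcal{T}(f_2))f_2\rA_{\dot H^{1/2}}\les\lA f_2\rA_{\dot H^{7/4}}\lA g\rA_{\dot H^{3/4}}$ by the same commutator computations.)

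It remains to absorb the right-hand side. The only available substitute for $\lA g\rA_{\dot H^1}^2$ is
\[
\lA g\rA_{\dot H^1}^2\le\big(1+\lA\pa_x f_1\rA_{L^\infty}^2\big)\int_\xR\frac{\bla\D g\bra^2}{1+(\pa_x f_1)^2}\dx,
\]
so I would interpolate $\lA g\rA_{\dot H^{3/4}}\le\lA g\rA_{\dot H^{1/2}}^{1/2}\lA g\rA_{\dot H^1}^{1/2}$, use Young's inequality to return a small multiple of the weighted integral to the left, and control the remaining factors by the interpolation inequalities of Lemma~\ref{L:3.5} specialised to $a=\tfrac13$ (the weight $\kappa(r)=\log(4+r)^{1/3}$): $\lA f_k\rA_{\dot H^2}$ and $\lA f_k\rA_{\dot H^{7/4}}$ are bounded by the $\mathcal{H}^{3/2,1/3}$- and $\mathcal{H}^{2,1/3}$-type norms with a gain of $\log(4+\cdot)^{-1/3}$ via~\e{Z20'}, whereas $\lA\pa_x f_k\rA_{L^\infty}^2$ costs only $\log(4+\cdot)^{1/3}$ via~\e{linf'}, and $\sup_t\lA f_k\rA_{3/2,1/3}\le M^{1/2}$, $\int_0^T\log(4+\lA f_k\rA_{2,1/3})^{-1/3}\lA f_k\rA_{2,1/3}^2\dt\le M$ by~\e{Z105}. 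Carrying out this bookkeeping exactly as in $\S\ref{S:3.2}$--$\S\ref{S:3.3}$ produces $\fract\lA g(t)\rA_{\dot H^{1/2}}^2\le\Lambda(t)\,\lA g(t)\rA_{\dot H^{1/2}}^2$ with $\int_0^T\Lambda\dt$ controlled, up to a factor $C(M)$, by $\sum_{k=1}^2\int_0^T\log(4+\lA f_k\rA_{2,1/3})^{-1/3}\lA f_k\rA_{2,1/3}^2\dt$, and Grönwall's lemma gives~\e{Z107}. The step I expect to be the main obstacle is precisely this last one: since $1/(1+(\pa_x f_1)^2)$ is not bounded from below, the weighted integral on the left does not control $\lA g\rA_{\dot H^1}^2$, and one must recover it at the price of $\lA\pa_x f_1\rA_{L^\infty}^2$; the whole estimate closes only because the logarithmic gains coming from the almost-critical regularity of $f_1,f_2$ beat these losses — the borderline arithmetic $a>1-2a$ at $a=\tfrac13$ that already governs the existence proof — and this balance is most delicate in the cross term $(III)$, where the $L^\infty$-loss attaches to $f_1$ while the Sobolev-gain attaches to $f_2$.
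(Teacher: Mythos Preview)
Your outline is essentially the paper's proof: the same equation for $g$, the same three terms $(I),(II),(III)$, the same estimates via~\e{Z2}, Proposition~\ref{Z3'} and~\e{Z14}, and the same pointwise bound on $(\mathcal{T}(f_1)-\mathcal{T}(f_2))f_2$ leading to $\lA f_2\rA_{\dot H^{7/4}}\lA g\rA_{\dot H^{3/4}}$. The interpolation inputs from Lemma~\ref{L:3.5} are also the right ones.

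There is, however, a genuine gap precisely where you flag it. With only the weight $1/(1+(\pa_x f_1)^2)$ on the left, the dissipative term you can afford to give away is $\epsilon\,\log\big(4+\lA f_1\rA_{2,\frac13}\big)^{-\frac13}\lA g\rA_{\dot H^1}^2$. After interpolating $\lA f_2\rA_{\dot H^{7/4}}\les_M \log\big(4+\lA f_2\rA_{2,\frac13}\big)^{-\frac13}\lA f_2\rA_{2,\frac13}^{1/2}$ and $\lA g\rA_{\dot H^{3/4}}\le\lA g\rA_{\dot H^{1/2}}^{1/2}\lA g\rA_{\dot H^1}^{1/2}$, Young's inequality on $(III)$ produces a Gr\"onwall coefficient of the form
\[
\log\big(4+\lA f_1\rA_{2,\frac13}\big)\cdot\log\big(4+\lA f_2\rA_{2,\frac13}\big)^{-\frac43}\lA f_2\rA_{2,\frac13}^2,
\]
whose time integral is not controlled by~\e{Z105} because the positive power of $\log(4+\lA f_1\rA_{2,\frac13})$ and the extra negative power of $\log(4+\lA f_2\rA_{2,\frac13})$ are attached to \emph{different} solutions. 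Your suggested $\dot H^{1/2}$ variant for the operator difference lowers the power of $\lA g\rA_{\dot H^1}$ but does not remove this mismatch.

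The paper's fix is a symmetrisation you did not write: since $g=f_1-f_2=-(f_2-f_1)$, one repeats the whole estimate with the roles of $f_1$ and $f_2$ interchanged and \emph{adds} the two inequalities. The left-hand side then carries
\[
C(M)\Big[\log\big(4+\lA f_1\rA_{2,\frac13}\big)^{-\frac13}+\log\big(4+\lA f_2\rA_{2,\frac13}\big)^{-\frac13}\Big]\lA g\rA_{\dot H^1}^2,
\]
so each right-hand term can be absorbed against the weight with the \emph{matching} index $k$, and Young's inequality leaves exactly $\sum_k\log(4+\lA f_k\rA_{2,\frac13})^{-\frac13}\lA f_k\rA_{2,\frac13}^2\,\lA g\rA_{\dot H^{1/2}}^2$, which is what~\e{Z107} requires. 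Once you insert this symmetrisation step, your argument coincides with the paper's.
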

\begin{proof}
Since $\partial_tf_k+\D f_k = \mathcal{T}(f_k)f_k$, it follows from the decomposition~\eqref{n1} 
of $\mathcal{T}(f_k)f_k$ that the difference $g=f_1-f_2$ satisfies
\begin{align*}
\partial_tg+\frac{\D g}{1+(\partial_xf_1)^2}&=  V(f_1)\partial_x g+R(f_1,g)+\left(\mathcal{T}(f_2+g)-\mathcal{T}(f_2)\right)f_2.
\end{align*}
Take the $L^2$-scalar product of this equation with $\D g$ to get
\begin{align*}
\frac{1}{2}  \fract\Vert g \Vert^{2}_{\dot{H}^\mez}+\int  \frac{( \D g)^2}{1+(\partial_x f_{1})^2} \dx
&\leq \left|\big(V(f_1)\partial_x g,|D| g\big)\right|+\lA R(f_1,g)\rA_{L^2}\lA g\rA_{\dot H^1}\\
&\quad+\lA \left(\mathcal{T}(f_2+g)-\mathcal{T}(f_2)\right)f_2\rA_{L^2}\lA g\rA_{\dot H^1}.
\end{align*}
The arguments used to show the identity \eqref{Z106} also implies that
$$
\la \big(V(f_1)\partial_x  g,\D g\big)\ra
=\mez\la \big( \big[ \mathcal{H},V(f_1)\big]\D g,\D g\big)\ra.
$$
Thus, by combining the estimate \eqref{Z3} for $\lA V(f)\rA_{\dot{H}^1}$ together with 
the commutator estimate~\eqref{Z2} about the Hilbert transform, 
the bound~\eqref{Z14} for the 
remainder term and eventually the following interpolation inequality (see \e{linf'}),
$$
\lA \partial_xf_{1}\rA_{L^\infty}\le C(M)\log\left(4+\lA f_1\rA_{2,\frac{1}{3}}\right)^{\frac{1}{6}},
$$
we end up with
\begin{align*}
 \fract\Vert g \Vert^{2}_{\dot{H}^\mez}+&C(M)\log\Big(4+||f_1||_{2,\frac{1}{3}}\Big)^{-\frac{1}{3}} ||g||_{\dot H^1}^2\\&~~~\lesssim  \left(\lA f_1\rA_{\dot{H}^2}+\lA f_1\rA_{\dot{H}^{\frac{7}{4}}}^2\right)\lA g\rA_{\dot H^{\mez}}\lA g\rA_{\dot H^{1}}+\lA f_1\rA_{\dot{H}^{\frac{7}{4}}}\Vert g\Vert _{\dot{H}^{\frac{3}{4}}}|| g||_{\dot H^1}\\
 &\quad+\lA \left(\mathcal{T}(f_2+g)-\mathcal{T}(f_2)\right)f_2\rA_{L^2}\lA g\rA_{\dot H^1}.
 \end{align*}
Now, directly from the definition of $\mathcal{T}(f)g$, we have
$$
|\left(\mathcal{T}(f_2+g)-\mathcal{T}(f_2)\right)f_2(x)|\lesssim \int |\Delta_{\alpha} (\partial_xf_{2})(x)||\Delta_{\alpha} g(x)| \dalpha,
$$
so, by H\"older's inequality, the $L^2$-norm of $(\mathcal{T}(f_2+g)-\mathcal{T}(f_2))f_2$ is estimated by
\begin{align*}
\left(\int\left(\int |\Delta_{\alpha}(\partial_xf_{2})(x)||\Delta_{\alpha} g(x)| \dalpha\right)^{2}\dx\right)^{\mez}
\leq \lA (\partial_xf_{2})\rA_{\dot{F}^{\frac{1}{2}}_{4,2}}\lA g\rA_{\dot{F}^{\frac{1}{2}}_{4,2}}\lesssim
\lA f_2\rA_{\dot{H}^{\frac{7}{4}}}\Vert g\Vert _{\dot{H}^{\frac{3}{4}}}.
\end{align*}
Hence, we derive 
\begin{align*}
\fract\Vert g \Vert^{2}_{\dot{H}^\mez}+&C(M)\log\Big(4+||f_1||_{2,\frac{1}{3}}\Big)^{-\frac{1}{3}} ||g||_{\dot H^1}^2\\&~~~\lesssim  \left(\lA f_1\rA_{\dot{H}^2}+\lA f_1\rA_{\dot{H}^{\frac{7}{4}}}^2\right)\lA g\rA_{\dot H^{\mez}}\lA g\rA_{\dot H^{1}}+\lA f_2\rA_{\dot{H}^{\frac{7}{4}}}\Vert g\Vert _{\dot{H}^{\frac{3}{4}}}|| g||_{\dot H^1}
\\&~~~\lesssim  \left(\lA f_1\rA_{\dot{H}^2}+\lA f_1\rA_{\dot{H}^{\frac{7}{4}}}^2\right)\lA g\rA_{\dot H^{\mez}}\lA g\rA_{\dot H^{1}}+\lA f_2\rA_{\dot{H}^{\frac{7}{4}}}\Vert g\Vert _{\dot{H}^{\frac{1}{2}}}^{\frac{1}{2}}|| g||_{\dot H^1}^{\frac{3}{2}}.
\end{align*}
Interchanging the role of $f_1$ and $f_2$, we also get a symmetric estimate. Then, by combining these two estimates, we get
\begin{align*}
\fract\Vert g \Vert^{2}_{\dot{H}^\mez}+&C(M)\bigg[
\log\Big(4+||f_1||_{2,\frac{1}{3}}\Big)^{-\frac{1}{3}}+\log\Big(4+||f_2||_{2,\frac{1}{3}}\Big)^{-\frac{1}{3}}\bigg] ||g||_{\dot H^1}^2\\
&~~~\lesssim  \left(\lA f_1\rA_{\dot{H}^2}+\lA f_1\rA_{\dot{H}^{\frac{7}{4}}}^2\right)\lA g\rA_{\dot H^{\mez}}\lA g\rA_{\dot H^{1}}+\lA f_2\rA_{\dot{H}^{\frac{7}{4}}}\Vert g\Vert _{\dot{H}^{\frac{1}{2}}}^{\frac{1}{2}}|| g||_{\dot H^1}^{\frac{3}{2}}\\
&~~~\quad +\left(\lA f_2\rA_{\dot{H}^2}+\lA f_2\rA_{\dot{H}^{\frac{7}{4}}}^2\right)\lA g\rA_{\dot H^{\mez}}\lA g\rA_{\dot H^{1}}+\lA f_1\rA_{\dot{H}^{\frac{7}{4}}}\Vert g\Vert _{\dot{H}^{\frac{1}{2}}}^{\frac{1}{2}}|| g||_{\dot H^1}^{\frac{3}{2}}.
\end{align*}

By  interpolation  inequality \eqref{Z20'}
\begin{align*}
&\lA f_k\rA_{\dot H^{\frac{7}{4}}}\lesssim_M \log\left(4+\lA f_k\rA_{2,\frac{1}{3}}\right)^{-\frac{1}{3}}
\lA f_k\rA_{2,\frac{1}{3}}^{\mez},\\
&\lA f_k\rA_{\dot H^2}\lesssim_M \log\left(4+\lA f_k\rA_{2,\frac{1}{3}}\right)^{-\frac{1}{3}}
\lA f_k\rA_{2,\frac{1}{3}},
\end{align*}
hence 
\begin{align*}
&\fract\Vert g \Vert^{2}_{\dot{H}^\mez}+C(M)\bigg[
\log\Big(4+||f_1||_{2,\frac{1}{3}}\Big)^{-\frac{1}{3}}+\log\Big(4+||f_2||_{2,\frac{1}{3}}\Big)^{-\frac{1}{3}}\bigg] \lA g\rA_{\dot H^1}^2\\
&\qquad\qquad\lesssim_M \log\left(4+\lA f_1\rA_{2,\frac{1}{3}}\right)^{-\frac{1}{3}}
\lA f_1\rA_{2,\frac{1}{3}}\lA g\rA_{\dot H^{\mez}}\lA g\rA_{\dot H^{1}}\\
&\qquad\qquad\quad+\log\left(4+\lA f_2\rA_{2,\frac{1}{3}}\right)^{-\frac{1}{3}}
\lA f_2\rA_{2,\frac{1}{3}}^{\frac{1}{2}}\lA g\rA _{\dot{H}^{\frac{1}{2}}}^{\frac{1}{2}}\lA g\rA_{\dot H^1}^{\frac{3}{2}}\\
&\qquad\qquad\quad+\log\left(4+\lA f_2\rA_{2,\frac{1}{3}}\right)^{-\frac{1}{3}}
\lA f_2\rA_{2,\frac{1}{3}}\lA g\rA_{\dot H^{\mez}}\lA g\rA_{\dot H^{1}}\\
&\qquad\qquad\quad+\log\left(4+\lA f_1\rA_{2,\frac{1}{3}}\right)^{-\frac{1}{3}}
\lA f_1\rA_{2,\frac{1}{3}}^{\frac{1}{2}}\lA g\rA _{\dot{H}^{\frac{1}{2}}}^{\frac{1}{2}}\lA g\rA_{\dot H^1}^{\frac{3}{2}}.
\end{align*}
Finally, by Holder's inequality,
\begin{align*}
\fract\Vert g \Vert^{2}_{\dot{H}^\mez}\lesssim_M 
\bigg(\sum_{k=1}^2 \log\left(4+||f_k||_{2,\frac{1}{3}}\right)^{-\frac{1}{3}}
\lA f_k\rA_{2,\frac{1}{3}}^2\bigg)\lA g\rA_{\dot H^{\mez}}^2,
\end{align*}
which in turn implies \eqref{Z107}. The proof is complete. 
\end{proof}

\section*{Acknowledgments} 
\noindent  Thomas Alazard acknowledges the support of the SingFlows project, grant ANR-18-CE40-0027 
of the French National Research Agency (ANR).  Quoc.-Hung Nguyen is  supported  by the Shanghai Tech University startup fund.

\vfill
\begin{flushleft}
\textbf{Thomas Alazard}\\
Universit{\'e} Paris-Saclay, ENS Paris-Saclay, CNRS,\\
Centre Borelli UMR9010, avenue des Sciences, 
F-91190 Gif-sur-Yvette\\
France.

\vspace{1cm}

\textbf{Quoc-Hung Nguyen}\\
ShanghaiTech University, \\
393 Middle Huaxia Road, Pudong,\\
Shanghai, 201210,\\
China

\end{flushleft}

\end{document}